 \def\al{\alpha}
 \def\be{\beta}
 \def\de{\delta}
 \def\ga{\gamma}
  \def\G{\Gamma}
 \def\la{\lambda}
 \def\si{\sigma}
 \def\om{\omega}
 \def\d{\mathrm d }
 \def\EE{{\mathbf E}}
 \def\VV{{\mathbf V}}
 \def\XX {{\mathbf X}}
 \def\EN{{\mathcal E}}
 \def\HN {{\mathcal H}}
  \def\G{\Gamma}
 \def\R{{\mathbb R}}
 \def\Z{{\mathbb Z}}
  \def\A{{\mathcal  A}}
 \def\ov{\overline}
 \def\oo{\mathrm o}
 \def\tt{{\mathrm t}}
 \def\cH{H^\checkmark}
\def\tto{\longrightarrow}
  \renewcommand{\proofname}{{\bf Proof:}}
 \theoremstyle{plain}
 \newtheorem{Thm}{Theorem}[section]
 \newtheorem{Lemma}[Thm]{\bf Lemma}
 \newtheorem{Corollary}[Thm]{\bf Corollary}
 \newtheorem{Theorem}[Thm]{\bf Theorem}
 \newtheorem{Proposition}[Thm]{\bf Proposition}
 \theoremstyle{definition}
 \newtheorem{Definition}[Thm]{\bf Definition}
 \theoremstyle{remark}
 \newtheorem{Remark}[Thm]{\bf Remark}
 \newtheoremstyle{Cl}
  {5pt}
  {3pt}
  {\sl}
  {}
  {\it}
  {:}
  {.5em}
  {}
 \theoremstyle{Cl}
 \def\begincproof{
                  \renewcommand{\proofname}{\it Proof:}
                  \begin{proof}
                 }
 \def\endcproof{
                \renewcommand{\qedsymbol}{$\diamondsuit$}
                \end{proof}
                \renewcommand{\qedsymbol}{\openbox}
                \renewcommand{\proofname}{\bf Proof:}
               }
 \renewcommand{\proofname}{{\bf Proof:}}
\title{Global results for Eikonal Hamilton-Jacobi equations on Networks}
\author{Antonio Siconolfi}
\address{Dipartimento di Matematica, Sapienza Universit\`a  di Roma, Italy.}
\email{siconolfi@mat.uniroma1.it}
\author{Alfonso Sorrentino}
\address{Dipartimento di Matematica, Universit\`a degli Studi di Roma ``Tor Vergata'', Rome, Italy.}
\email{sorrentino@mat.uniroma2.it}
\begin{document}
\maketitle

\begin{abstract}
We study a  one--parameter family of Eikonal  Hamilton-Jacobi
equations on an embedded network, and prove that there exists a
unique critical value for which  the corresponding equation admits
global solutions, in a suitable viscosity sense.  Such a solution is
identified, via an Hopf--Lax type formula, once an admissible trace
is assigned on an {\it intrinsic boundary}.   The salient point of
our method is to associate to the network an {\it abstract graph},
encoding all of the information on the complexity of the network,
and  to relate the differential equation  to a {\it discrete functional
equation} on the graph. Comparison principles and representation
formulae are proven in the supercritical case as well.
\end{abstract}

\section{Introduction}

{Over the last years there has been an increasing interest in the
study of the Hamilton-Jacobi Equation on networks and related
questions. These problems, in fact, involve a number of subtle
theoretical issues and have a great impact in the applications in
various fields, for example to data transmission, traffic management
problems, etc... While locally -- {\it i.e.}, on each branch of the
network ({\it arcs}) --, the study reduces to the analysis of
$1$-dimensional problems, the main difficulties arise in matching
together the information ``converging''  at the {\it juncture} of
two or more arcs, and relating the {\it local}
analysis at a juncture  with the {\it global} structure/topology of the network.\\

In this article, we provide a thorough discussion of the above issues in the case of Eikonal type Hamilton-Jacobi equations on
 embedded networks (in $\R^n$ or on a Riemannian manifold, see Remark
 \ref{Riemannian}). We show that there exists a unique  critical value
 for which  the corresponding equation admits global solutions,  and
 extend  most of the results known in the continuous  setting for the critical and  supercritical case. \\

The main rationale behind our approach consists in neatly
distinguishing between the {local} problem on the arcs and the
{global} analysis on the network. While the former can be solved by
means of (classical) $1$-dimensional viscosity techniques, the
latter is definitely more engaging.

Our novel idea is to tackle it
by associating to the network an {\it abstract graph}, encoding all
of the  information on the complexity of the network, and to relate the problem to a {\it discrete functional equation} on the graph.
This allows us
to pursue a global analysis of the equation -- that goes beyond what
happens at a  single juncture --, as well as
to prove uniqueness and comparison principles in a simpler way.
To the best of our knowledge, this is the first time that comparison type results are obtained in the network setting  by completely bypassing the difficulties involved in
the Crandall-Lions doubling variable method, in favor of a more direct analysis of a discrete equation.\\

In addition to this, by exploiting the simple geometry of the
abstract graph we are able to identify an intrinsic boundary -- the
{\it Aubry set} -- on which admissible traces can be assigned in
order to get unique critical solutions on the whole network; these
solutions can be represented by means of Hopf--Lax type formulae. In
the supercritical case  we  get existence and uniqueness of
solutions,  on  any open subset of the network, continuously
extending admissible data prescribed on the complement.

Let us point out that the problem of  formulating boundary problems
on the network and accordingly determining ``natural'' subsets on
which to assign boundary data is a subtle issue, yet not well
settled in the literature; we believe that our approach helps
clarify this matter, at least in the class of equations that we are considering.\\

The notions of viscosity solution and subsolution that we adopt are
very natural in this setting (see Definitions \ref{defsolsubsol} and
\ref{defsolsubsol2}). More specifically, the tests we use at
vertices are classical in viscosity solutions theory and consist in
(unilateral) state constraint type boundary conditions, introduced
by Soner \cite{Soner1} to study control problems with constraints.
In this regard,  the notion of solution requires that at each vertex
the state constraint condition holds for at least one arc ending
there: it does not require other mixing conditions (on the vertices)
 between equations defined on different incident arcs.

Very recently,  the same notion of solution has been also considered
by Lions and Souganidis in \cite{LionsSouganidis}  to deal with  one
dimensional junction-type problems for non convex discounted
Hamilton-Jacobi equations and study its well-posedness ({\it i.e.},
comparison principle and existence). Global solutions on networks, however, are not therein studied.\\

As far as subsolutions are concerned, we only ask  that they are
continuous on the network and are (viscosity) subsolutions to the
equation on the interior of each arc: no extra conditions are
required on vertices. These assumptions are the minimal requirements
that one needs to ask and, at a first sight, it might seem
surprising that they are sufficient to develop a significant global
theory. However, the validity of this approach is supported, among
other things, by the fact that  the notion of solutions  can be
recovered in terms of maximal subsolution attaining a
specific value at a given point (vertex or internal point); see  Theorem \ref{deniro}.\\

We also wish to point out that our hypothesis both on the topology
of network and the Hamiltonians are very general. As far as the
network is concerned, we only ask it to be made up by finite arcs
and connected: hence,
it may well include multiple connections between different vertices, as well as the presence of loops. \\
The Hamiltonians  are assumed continuous in both variables,
quasiconvex and coercive in the first order variable on any arc.
Hamiltonians on different arcs  are independent one from
the others and no compatibility conditions at the vertices are required. See subsection \ref{secHam} for more details.\\

We are confident that this very same set of ideas
can be successfully applied to a broad range of  other problems: for example,  to the study of the
{\it discounted} Hamilton-Jacobi equation on networks or to prove
{\it homogenization} results for the Hamilton-Jacobi equation on periodic networks (also known as {\it topological crystals}).
We plan to address these and other questions  in a future work (in  preparation).

\subsection{Previous  related literature}
There is a huge amount of literature related to differential
equations on networks,  or others  non-regular geometric structures
(ramified/stratified spaces), in various contexts:
 hyperbolic problems, traffic flows, evolutionary equations, (regional) control problems, Hamilton-Jacobi equations, etc...
 An exhaustive description of the state of the art in all of these areas would go well beyond the aims of this paper;  just to mention a few noteworthy items:
\cite{AchdouCamilliCutriTchou, BarlesBrianiChasseigne1, BarlesBrianiChasseigne2, BressanHong, CamilliMarchi, CamilliMarchiSchieborn, DaviniFathiIturriagaZavidovique,
GaliseImbertMonneau, GaravelloPiccolo, ImbertMonneau1, ImbertMonneau2, ImbertMonneauZidani, LionsSouganidis,
 PokornyiBorovskikh, RaoSiconolfiZidani, CamilliSchieborn, Soner1}. See also references therein.\\

A model similar to ours has been previously considered by Camilli
and Schieborn in \cite{CamilliSchieborn}, however just in the
supercritical case and under some restriction on the topology of the
network. In comparison with their hypothesis, we do not require
continuity of the Hamiltonians at the vertices (and accordingly,  no
mixed conditions on the test functions at the vertices)
and we do not ask  a-priori existence of a regular strict subsolution.\\

Other relevant recent contributions are \cite{LionsSouganidis} (that we have already mentioned above) and
\cite{ImbertMonneau1}. In particular, the latter is a substantial work  -- whose point of view and techniques are rather different from ours -- in which Imbert and Monneau
attempt to recover the doubling variable method to their setting, by introducing an extra parameter (the flux limiter), a companion equation (the {junction condition}) and
by using special vertex test functions.
See also other related  works by the same authors and collaborators \cite{GaliseImbertMonneau,  ImbertMonneauZidani, ImbertMonneau2}.\\

Our analysis of the discrete functional equation is based on ideas and
techniques inspired by the so-called {\it weak KAM theory},  firstly
developed by Fathi \cite{Fathi} for the study of Tonelli
Hamiltonian systems on closed manifolds (see also
\cite{Sorrentinobook}). Developing a similar approach in the discrete
setting is very natural and has been already exploited in several
other works. In \cite{BernardBuffoni, BernardBuffoni2}, for example,
a discretization of weak KAM theory was applied to investigate the
properties of  optimal transport maps; a  more systematic
development of a {discrete weak KAM theory} for {\it cost functions}
was described by Zavidovique in \cite{Zavidovique1,Zavidovique2} (see also \cite{DaviniFathiIturriagaZavidovique}). In particular, \cite{Zavidovique2} shares ideas similar to
ours, although our setting has the peculiarity of this interplay between the discrete structure and the embedded network.

From a more dynamical systems point of view, a discrete analogue of Aubry-Mather theory and weak KAM theory was also discussed in \cite{Gomes} (see \cite{SuThieullen} for a recent related work).\\

\medskip

\noindent {\bf Acknowledgments.} This work has been supported by the
INdAM-GNAMPA Research project :``{\it Fenomeni asintotici e
omogeneizzazione}''. A. Siconolfi acknowledges the Progetto Ateneo
2015-- Rome {\it La Sapienza} University: ``{\it Asintotica e
omogeneizzazione  di dinamiche Hamiltoniane}''.  A. Sorrentino
acknowledges the PRIN- 2012-74FYK7 grant: ``{\it Variational and
perturbative aspects of nonlinear differential problems}''.

\bigskip


\section{Preliminaries on Graph Theory} \label{prelim}
In this section we  recall some basic material on the theory of
abstract graphs  and on functions defined on them. For a more
detailed presentation of these and other related topics, we refer
the interested readers, for instance, to \cite{Sunada}.

\subsection{Abstract graphs}

\smallskip
A (abstract) graph $\XX=(\VV ,\EE )$ is an ordered pair of  sets
$\VV$ and $\EE$, which are  called, respectively, {\it vertices} and
(directed) {\it edges}, plus two functions:
$$
\oo: \EE \longrightarrow \VV
$$
and
\begin{eqnarray*}
 \overline{\phantom{o}}: \EE &\longrightarrow& \EE \\
e &\longmapsto& \ov e,
\end{eqnarray*}
with the latter assumed to be a  fixed-point-free involution, namely
satisfying
\[\ov e \neq e \qquad\hbox{and} \qquad \ov{\ov e}= e \qquad\hbox{for any $e \in \EE$.}\]
We give the following geometric picture of the setting: $\oo(e)$  is
the {\it origin} (initial vertex) of $e$ and $\ov e$ its {\it
reversed} edge, namely the same edge but with the opposite
orientation. Analogously  we define
\[\tt  (e)= \oo (\ov e)\]
the {\it terminal} vertex of $e$. The following compatibility
condition holds true
\[\tt  (\ov e)= \oo (\ov{\ov e})= \oo (e).\]
We say that $e$ links $\oo (e)$ to $\tt  (e)$, observe that it might
well happen that $\oo(e)=\tt(e)$, and in this case  $e$ will be
called a {\it loop}.
 An
edge is  also said to be incident on $\oo (e)$ and $\tt  (e)$. Two
vertices are called {\it adjacent} if there is an edge linking them or, in
other terms, if there is an edge incident on both of them. \\
\smallskip

We say that the graph is {\it finite} if  the
set $\EE$, and consequently $\VV$, has a finite number of elements.
We denote by $|\VV|$, $|\EE|$ the number of
vertices and edges. \\

\smallskip

We define a {\it path} to be a finite sequence
of concatenated  edges, namely   $\xi=(e_1, \cdots, e_M)=(e_i)_{i=1}^M$ satisfying
\[\tt  (e_j)=\oo (e_{j+1}) \qquad\hbox{for any $j= 1, \cdots, M-1$.}\]
We set $\oo (\xi)= \oo (e_1)$, $\tt  (\xi)= \tt  (e_M)$, and call
them the initial and final vertex of the path. We  say that $\xi$
links $\oo (\xi)$ to $\tt  (\xi)$, we also say that $\xi$ is
{incident} on some vertex if there is some edge composing the path
incident on it.

Given two paths $\xi$, $\eta$, we say that  $\xi$ is contained in
$\eta$, mathematically  $\xi \subset \eta$, if the edges  of $\xi$
make up a subset of the  edges of $\eta$.  If such a subset is
proper, we say that $\xi$ is {\it properly contained} in $\eta$. If
$\tt(\xi) = \oo(\eta)$, we denote by $\xi \cup \eta$ the path
obtained via concatenation of $\xi$ and $\eta$.

We call a path  a {\it loop} or a {\it cycle} if $\oo (\xi)= \tt
(\xi)$. A  path without repetition of vertices except possibly the
initial and terminal ones will be called {\it simple}, in other
terms $\xi=(e_i)_1^M$ is simple if
\[\tt(e_i) = \tt(e_j) \, \Rightarrow i=j,\]
or if there are no cycles properly contained in $\xi$.  Note that
there are finitely many simple paths in a finite graph.

 \smallskip

A graph is called  {\it connected} if any two vertices are linked by
some path.
 All of the graphs we will consider hereafter are
understood to be connected and finite. \\

\smallskip

Given $x \in   \VV$, we set
\begin{equation}\label{defstar}
\EE_x= \{e \in \EE \mid \oo (e) =x\},
\end{equation}
which we call $\EE_x$ the {\it star centered at $x$}; it should be considered as a sort of
tangent space to the graph at $x$. The cardinality  of
$\EE_x$ is called the {\it degree} (or {\it valence}) of the vertex
$x$.

\subsection{Functions  on  graphs} In the following we will be interested in functions defined on abstract graphs.
It is useful to introduce the following notions. \\

We define:
\begin{itemize}
\item[-] the  {\it $0$--cochain group} $C^0( \XX, \R)$  as the
space of functions from $\VV$ to $\R$. This space play the role of functions on the graph.
\item[-] The {\it $1$--cochain group} $C^1( \XX, \R)$ as the
space of functions from $\EE$ to $\R$, the compatibility condition
$\om(\ov e)= - \om(e).$ This space plays the role of $1$-forms on
the graph. From now  on we will indicate the reverse edge $\ov e$ by
$-e$ and  we will consider the pairing $\langle \om,e \rangle :=
\om(e)$.
\end{itemize}

\smallskip

The relation between  $C^0( \XX,\Z)$ and $C^1( \XX, \Z)$ can be
expressed in terms of the so-called {\it coboundary operator}, or
{\it differential}, $\d: C^0( \XX,\Z) \to C^1( \XX, \Z)$, which is
defined for any $f\in C^0( \XX,\Z)$ and $e\in \EE$ as
\[\d f (e) :=   f(\tt(e))- f(\oo(e)).\\ \]

\bigskip

We can embed these spaces with the standard topology.
A notion of
convergence on the cochain spaces is given via
\begin{eqnarray*}
f_n \longrightarrow f &\Longleftrightarrow& f_n(x) \longrightarrow f(x) \qquad \hbox{for any $x \in \VV$} \\
 \om_n \longrightarrow \om &\Longleftrightarrow& \om_n(e) \longrightarrow \om(e) \qquad \hbox{for any $e \in \EE$.}\\
\end{eqnarray*}

\smallskip

A sequence $f_n$ is said {\it equibounded} if
\[|f_n(x)| \leq \be \qquad\hbox{for any $x \in \VV$, some $\be
>0$;}\]
similarly $\om_n$ is said  equibounded if
\[|\langle \om_n,e \rangle | \leq \be \qquad\hbox{for any $e \in \EE$, some $\be
>0$.}\]
It is clear that any equibounded sequences $f_n$, $\om_n$ are
convergent, up to subsequences.\\

We directly deduce from the above definitions:

\smallskip

\begin{Proposition} \label{equi} Let $f_n$, $f$ be in $C^0(\XX,\R)$
\begin{itemize}
    \item [{i)}] if $f_n \tto f$, then $\d f_n \tto \d f$;
    \item [{ii)}] if $\d f_n$ is equibounded and the sequence
    $f_n(x_0)$ is bounded for some vertex $x_0$, then $f_n$ is
    convergent, up to subsequences.
\end{itemize}
\end{Proposition}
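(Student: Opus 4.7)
The proposition has a very graph-theoretic flavor and can be proved by unwinding definitions together with standard extraction arguments. I will treat the two parts separately.

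For part (i), the plan is simply to read off the claim from the definition of the coboundary operator. Fix any edge $e \in \EE$. By definition, $\d f_n(e) = f_n(\tt(e)) - f_n(\oo(e))$ and analogously for $\d f(e)$. Since $f_n \to f$ pointwise on $\VV$, in particular the values at the two specific vertices $\tt(e)$ and $\oo(e)$ converge, so subtracting gives $\d f_n(e) \to \d f(e)$. As this holds for every edge, we have $\d f_n \to \d f$ in the sense of the convergence defined on $C^1(\XX,\R)$.

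For part (ii), the plan is to use connectedness of $\XX$ to propagate the boundedness of $f_n$ at the single vertex $x_0$ to all of $\VV$, and then to extract a convergent subsequence using finiteness of $\VV$. Since $\XX$ is connected, for any $x \in \VV$ there exists a (simple) path $\xi = (e_1,\dots,e_M)$ with $\oo(\xi)=x_0$ and $\tt(\xi)=x$. Telescoping the differences gives
\[
f_n(x) = f_n(x_0) + \sum_{j=1}^M \d f_n(e_j).
\]
Since $\d f_n$ is equibounded, say $|\langle \d f_n,e\rangle|\le\be$ for all $e$ and $n$, and since the number $M$ of edges of a simple path is at most $|\EE|$, the right-hand side is bounded uniformly in $n$ by $|f_n(x_0)| + \be\,|\EE|$. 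Combined with the assumed boundedness of $f_n(x_0)$, this shows that the scalar sequence $f_n(x)$ is bounded for every $x \in \VV$.

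Finally, since $\VV$ is finite, a diagonal (or finite iterated) extraction of subsequences produces a single subsequence along which $f_n(x)$ converges for every $x \in \VV$ simultaneously; this is exactly the notion of convergence in $C^0(\XX,\R)$ stated in the excerpt. No serious obstacle is expected: the only conceptual point is the use of connectedness to transfer boundedness from one vertex to the whole network, and of finiteness of $\VV$ to pass from pointwise boundedness to a convergent subsequence.
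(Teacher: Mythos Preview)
Your proof is correct and is precisely the direct argument the paper has in mind: the paper does not give a proof at all, merely stating that the proposition is ``directly deduce[d] from the above definitions,'' and your unwinding of the coboundary operator for part (i) together with the connectedness-plus-finiteness argument for part (ii) is exactly how one makes this deduction explicit.
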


\medskip

\section{Setting}\label{setting}
In this section we first explain our setting,
 namely what is an {\it embedded network} and what we mean by {\it Hamiltonian} on a network.
 Then we introduce the class of {\it Hamilton-Jacobi equations} on a network we are interested in, and specify the notions of solutions and
 subsolutions.

\subsection{Embedded networks}\label{networks}

An {\it embedded network}, or {\it continuous graph}, is a subset $
\Gamma \subset \R^N$ of the form
\[ \Gamma = \bigcup_{\ga \in \EN} \, \gamma([0,1]) \subset \R^N,\]
where $\EN$ is a finite collection of regular simple oriented
curves, called {\it arcs} of the network,  that we assume, without
any loss of generality, parameterized on $[0,1]$.
We denote by $\EN^*$ the subset of arcs $\ga$ which are closed, namely with $\ga(0)=\ga(1)$. \\

\begin{Remark}\label{Riemannian}
Our setting can be easily extended to the case in which $\Gamma$
is embedded in a Riemannian manifold  $(M,g)$, for example by means of Nash embedding theorem \cite{Nash}.\\
\end{Remark}

Observe that on the support of any arc $\ga$, we  also consider the
inverse parametrization   defined as
\[\widetilde \ga(s)= \ga( 1 -s) \qquad\hbox{for $s \in [0,1]$.}\]
We call $\widetilde \ga$ the {\it inverse arc} of $ \ga$.  We assume
\begin{equation}\label{netw}
    \ga((0,1)) \cap \ga'((0,1)) = \emptyset \qquad\hbox{whenever $\ga \neq
\ga'$, $\ga \neq \widetilde{\ga'}$.}\\
\end{equation}

\medskip
 We call  {\it vertices} the
initial  and terminal points of the arcs, and denote  by  $\VV$ the
sets of all such vertices. Note that \eqref{netw} implies that
\[\ga((0,1)) \cap \VV  = \emptyset \qquad\hbox{for any $\ga \in
\EN$.}\] We assume that the network  is  connected, namely given two
vertices there is a finite concatenation of  arcs linking them.\\

The network $\Gamma$ inherits a {\it geodesic distance}, denoted by
$d_\Gamma$,  from the Euclidean metric of $\R^N$. Hence, hereafter
the notions of continuity and Lipschitz continuity,  when referred
to functions defined on $\Gamma$,  must be understood with respect
to such distance and the induced topology.\\

\smallskip

We can also consider a {\it differential structure} on $\Gamma$ by
defining  the tangent space at any $x \in \G \setminus \VV$ as
\begin{eqnarray*}
  T_\Gamma(x) &=&\{ \lambda \, \dot\ga(t) \mid \lambda \in \R, \,\ga \in \EN, \, t \in (0,1) \;\hbox{and} \; x=\ga(t) \}
\end{eqnarray*}
and  the cotangent space $T^*_\Gamma(x)$ as the dual space
$(T_\G(x))^*$;  namely,   it is the set of linear functionals $p: T_\G(x) \longrightarrow \R$.}\\

We will say that  a function $f: \Gamma \to \R$ is of class $C^1(\G
\setminus V)$ if it is continuous  in $\G$ and
\[ t \mapsto f(\ga(t)) \;\;\hbox{is of class $C^1$ in $(0,1)$ for any $\ga \in
\EN$.}\] For such a function we define $D_\G f(x)$, where $x =
\ga(t_0)$ for some $\ga \in \EN$ and $t_0 \in (0,1)$, as the unique
covector in $T^*_\Gamma(x)$ satisfying

\[ ( D_\G f(x) , \dot\ga(t_0) ) = \frac d{dt} f(\ga(t))\big |_{t=t_0},\]
where $( \cdot,\cdot )$ denotes the  pairing between covectors and
vectors.

Notice that this definition is invariant for a change of
parametrization from $\ga$ to $\widetilde \ga$.\\

\smallskip

We can associate to any continuous network $\Gamma$ an abstract
graph $\XX= (\VV, \EE)$  with the same vertices of the
network
 and edges corresponding to  the arcs.
More precisely, we consider an  abstract set $\EE$ with a bijection
\begin{equation}\label{defPsi}
  \Psi: \EE \longrightarrow \EN.
  \end{equation}
This induces  maps  $o : \EE \longrightarrow \VV$,
$\overline{\phantom{o}}: \EE \longrightarrow \EE $
 via
 \begin{eqnarray*}
   \oo(e) =  \Psi(e)(0)  \quad {\rm and} \quad
   \overline e = \Psi^{-1}(\widetilde{\Psi(e)}),
 \end{eqnarray*}
satisfying the properties in the definition of graph. Intuitively,
in the passage from the embedded network to the underlying abstract
graph  $\XX$,  the arcs become  {\it immaterial} edges.\\

\subsection{Hamiltonians on networks}\label{secHam}
A Hamiltonian on a network $\Gamma$ is a collection of Hamiltonians
$\HN=\{H_{\gamma}\}_{\ga \in \EN}$, where
\begin{eqnarray*}
H_\ga: [0,1] \times \R &\longrightarrow& \R\\
(s,p) &\longmapsto& H_\ga(s,p)
\end{eqnarray*}
satisfies
\begin{equation}\label{ovgamma}
   H_{\widetilde\ga}(s,p) = H_{\ga}(1-s,-p) \qquad\hbox{for any $\ga
\in \EN$} \\
\end{equation}

\medskip

Notice that we are not assuming any periodicity on
$H_\ga$ when $\ga$ is a closed curve.\\

\smallskip

We  require  any  $H_\ga$ to be:
\begin{itemize}
    \item[{\bf (H$\ga$1)}] continuous in $(s,p)$;
    \item[{\bf (H$\ga$2)}] coercive in $p$;
    \item[{\bf (H$\ga$3)}]  quasiconvex in $p$, with
    \[ \mathrm{Int} \,\big(\{p \mid H_\ga(x,p) \leq a\} \big)=  \{p \mid H_\ga(x,p) < a\} \quad\hbox{for any $a\in \R$,} \]
    where $\mathrm{Int}\big(\cdot \big)$ denotes the interior of a set.\\
\end{itemize}

\smallskip

We point out that, throughout the paper, the term (sub)solution to
Hamilton--Jacobi equations involving the $H_\ga$'s, must be
understood in the viscosity sense, see for example \cite{BardiCapuzzo, Barles} for a comprehensive treatment of viscosity
solutions theory.\\

\smallskip
We  set for any $\ga \in \EN$
\begin{eqnarray}
  a_\ga &:=&   \max_{s\in[0,1]}\min_{p \in \R} H_\ga(s,p)       \label{a00}  \\
  c_\ga &:=& \min \{a \, :\, H_\ga=a \;\hbox{admits periodic subsolutions}\}. \label{c00}
\end{eqnarray}

By periodic subsolution, we  mean subsolution to the equation in
$(0,1)$ taking the same value at the endpoints.\\
\smallskip

\begin{Remark}\label{cga} The definition of $c_\ga$ is indeed well-posed. In
fact, given $\ga \in \EN$, because of the the compactness of
$[0,1]$, we can choose $a$  large enough to have
\[H(s,0) \leq a \qquad\hbox{any $s \in (0,1)$.}\]
This shows that any constant function  is a subsolution and,
consequently, the set in the definition of $c_\ga$ is non-empty. It
is also bounded from below since for $a < a_\ga$ the corresponding
equation does not admit subsolutions and, therefore,  it does not admit
periodic ones. Finally, by basic stability properties in
viscosity solution theory, there  exists a periodic subsolution at
the level $c_\ga$, which justifies the minimum appearing  in the
definition.\\
We will essentially use $c_\ga$ for $\ga \in \EN^*$, but  in
principle the definition and the above considerations hold for any
$\ga$.\\
\end{Remark}

We stress that
\[a_\ga \leq c_\ga \qquad\hbox{for any   $\ga \in \EN$}. \]
\smallskip

We further define
\begin{equation}\label{a0}
 a_0 := \max \left \{\max_{\ga \in \EN \setminus \EN^*} a_\ga, \max_{ \ga \in  \EN^*} c_\ga \right \}.
\end{equation}
\smallskip

We require a further condition:

\smallskip

\begin{itemize}
    \item[{\bf (H$\ga$4)}]  given any $\ga \in \EN$ with $a_\ga =a_0$,  the map  $s \longmapsto  \min_{p \in \R} H_\ga(s,p)$
    is constant  in $[0,1]$.\\
\end{itemize}

\begin{Remark}\label{trump}    The
main role of {\bf (H$\ga$4)} is to ensure uniqueness of solutions to
the Dirichlet problem associated to the equation $H_\ga= a_\ga$, at
least for the $\ga$'s with $a_\ga=a_0$. The uniqueness property for
such kind of problems holds in general when the equation admits a
strict subsolution, which is not the case at the level $a_\ga$. The
relevant consequence of  condition {\bf (H$\ga$4)} is that the
family of subsolutions to $H_\ga= a_\ga$ reduces to a singleton, up
to additive constants, see Proposition \ref{equnique}.

Notice finally that  condition {\bf (H$\ga$4)} is automatically
satisfied if the $H_\ga$'s are independent of the state variable.
\end{Remark}
\smallskip

\medskip

\subsection{The Eikonal Hamilton--Jacobi equation on networks}\label{secHJeq}

\medskip

We define a notion of subsolution and solution to  an  equation of the form
\begin{equation} \label{HJ}\tag{$\mathcal{H}J\mathit{a}$}
\HN(x,Du)= a  \qquad\hbox{on $\Gamma$.}
\end{equation}
where $a \in \R$. This notation synthetically  indicates  the family
(for $\ga$ varying in $\EN$) of Hamilton--Jacobi equations
\begin{equation} \label{HJg}\tag{$HJ_\ga \mathit{a}$}
H_\ga(s,(u \circ \ga)')= a  \qquad\hbox{on $(0,1)$.} \\
\end{equation}

\bigskip

 We start by recalling some terminology of viscosity solutions
 theory.\\

\begin{Definition}
Given a continuous function $w$ in $[0,1]$ and a function $\varphi
\in C^1([0,1]$, we say that:
\begin{itemize}
\item[-] $\varphi$ is {\it supertangent} to $w$ at $s \in (0,1)$ if
\[ w = \varphi \;\;\hbox{at $s$} \quad {\rm and}\quad w \geq \varphi \;\;\hbox{in $(s-\de, s+ \de)$ for some $\de >0$.}
\]
The notion of {\it subtangent} is given by just replacing $\geq$  by
$\leq$ in the above formula.
\item[-] $\varphi$ is a {\it constrained subtangent} to $w$ at
$1$ if
\[ w = \varphi \;\;\hbox{at $1$} \quad {\rm and}\quad w \geq \varphi \;\;\hbox{in $(1-\de, 1)$ for some $\de >0$.}
\]
A similar notion, with obvious adaptations, can be given at $t= 0$.\\
\end{itemize}
\end{Definition}


\begin{Definition}\label{boundary} Given  a continuous function $w$ in $[0,1]$, a point $s_0 \in \{0,1\}$,
 we say that it satisfies
{\it the state constraint boundary condition} for \eqref{HJg} at
$s_0$ if
 \[ H_\ga (s_0, \varphi'(s_0)) \geq a\]
for any $\varphi$ that is a constrained $C^1$ subtangent  to $w$ at $s_0$.
\end{Definition}

\smallskip

\begin{Definition} \label{defsolsubsol} We say that $u: \Gamma \longrightarrow \R$ is {\it subsolution} to \eqref{HJ}
if
\begin{itemize}
    \item [{i)}] it is continuous on
    $\Gamma$;
    \item [{ii)}] $s \mapsto u(\ga(s))$ is subsolution to
    \eqref{HJg} in $(0,1)$ for any $\ga \in \EN$.
\end{itemize}
\smallskip
We say that $u$ is {\it solution } to \eqref{HJ} if
\begin{itemize}
     \item [{i)}] it is  continuous;
     \item [{ii)}] $s \mapsto u(\ga(s))$   is solution of
    \eqref{HJg} in $(0,1)$ for any $\ga \in \EN$;
    \item [{iii)}] for
    every vertex $x$ there is at least one arc $\ga$, having $x$ as terminal point, such that   $ u(\ga(s))$
    satisfies the state constraint boundary   condition for \eqref{HJg}  at $s=1$.
\end{itemize}
\end{Definition}

Compare also this definition with the one in \cite{LionsSouganidis}. As far as we know, the idea of imposing a supersolution condition on just one arc incident to a given vertex, first appeared in \cite{CamilliSchieborn}.

We do not provide a notion of supersolution. This could be done
straightforwardly  but we will not need it in the remainder of the
paper.\\

\begin{Definition}\label{defsolsubsol2}
 Given  an open (in the relative topology) subset $\G' \subset \G$, we say that a continuous function $u:\G \to\R$ is {\it solution to
\eqref{HJ} in $\Gamma'$},  if  for any $x \in \G' \setminus \VV$, $x
=\ga(s_0)$ with $\ga \in \EN$, $s_0 \in (0,1)$, the usual viscosity
solution condition holds true for $u \circ \ga$ at $s_0$. If instead
$x \in \G' \cap \VV$,  we  require condition {iii)} in Definition
\ref{defsolsubsol} to
hold.
\end{Definition}

\smallskip

\begin{Remark}\label{cgabis}
The  definition of (sub)solutions on $\Gamma$ requires $u\circ \ga$
to be a (sub)solution of the corresponding equation in $(0,1)$ on
any arc $\ga$. If, in particular  $\ga$ is a closed curve, we must
have in addition $u(\ga(0))=u(\ga(1))$. This explains why on any arc
$\ga \in \EN^*$ we are solely interested in periodic (sub)solutions,
namely (sub) solutions in $(0,1)$ taking the same value at $0$ and
$1$. This also explains the role of $c_\ga$.
\end{Remark}

\smallskip

 Given a continuous function $u$
defined in $[0,1]$, it is apparent that a $C^1$ function  $\varphi$
is supertangent (resp. subtangent ) to $u$ at $s_0 \in (0,1)$ if and
only if $\widetilde \varphi(s):= \varphi(1-s)$ is supertangent
(resp. subtangent ) to $s \longmapsto u(1-s)$ at $1-s_0$. Taking into
account \eqref{ovgamma}, we derive the following result.

\begin{Proposition}\label{inverti} Given an arc $\ga$,  a function $u(s)$ is subsolution (resp. solution) to \eqref{HJg}
if and only if $s \mapsto u(1-s)$ is subsolution (resp. solution) to
the the same equation  with $H_{\widetilde \ga}$ in place of
$H_\ga$.
\end{Proposition}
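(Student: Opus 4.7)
The plan is to reduce everything to the two straightforward observations already collected in the excerpt: (a) the correspondence between super/subtangents to $u(\cdot)$ at $s_0$ and super/subtangents to $s\mapsto u(1-s)$ at $1-s_0$ via $\varphi \leftrightarrow \widetilde\varphi$, and (b) the compatibility relation \eqref{ovgamma}, $H_{\widetilde\ga}(s,p)=H_\ga(1-s,-p)$. Combining these two ingredients with the chain rule turns a viscosity test for $H_\ga$ at the point $1-s_0$ into a viscosity test for $H_{\widetilde\ga}$ at the point $s_0$, with the sign change on $p$ being exactly absorbed by \eqref{ovgamma}.

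More concretely, set $v(s):=u(1-s)$ and suppose first that $u$ is a subsolution of \eqref{HJg}. Given any $C^1$ subtangent $\psi$ to $v$ at a point $s_0\in(0,1)$, I would define $\varphi(t):=\psi(1-t)$; the paragraph preceding the statement guarantees that $\varphi$ is a $C^1$ subtangent to $u$ at $t_0:=1-s_0\in(0,1)$. The subsolution property of $u$ on the arc $\ga$ then gives
\[
H_\ga\bigl(t_0,\varphi'(t_0)\bigr)\le a,
\]
and since $\varphi'(t_0)=-\psi'(1-t_0)=-\psi'(s_0)$, the compatibility condition \eqref{ovgamma} yields
\[
H_{\widetilde\ga}\bigl(s_0,\psi'(s_0)\bigr)=H_\ga\bigl(1-s_0,-\psi'(s_0)\bigr)\le a,
\]
which is exactly the subsolution test for $v$ with respect to $H_{\widetilde\ga}$.

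For the full statement about solutions, I would repeat the same computation verbatim with a $C^1$ supertangent in place of a subtangent, reversing the inequality, so that the supersolution part is handled in complete analogy. Since the transformation $s\mapsto 1-s$ is an involution and $H_{\widetilde{\widetilde{\ga}}}=H_\ga$ by \eqref{ovgamma} applied twice, the converse implications in both equivalences follow without any additional work by interchanging the roles of $u$ and $v$, and of $\ga$ and $\widetilde\ga$.

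I do not foresee any genuine obstacle: the statement is essentially a bookkeeping lemma certifying that the viscosity notions for \eqref{HJg} are invariant under reversal of the parametrization, and the only point to be careful with is the minus sign introduced by the chain rule, which is precisely what \eqref{ovgamma} is designed to compensate.
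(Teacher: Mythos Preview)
Your argument is correct and is precisely the one the paper intends: the paragraph immediately preceding the statement records the super/subtangent correspondence under $s\mapsto 1-s$, and the paper simply says the result follows from this together with \eqref{ovgamma}, which is exactly what you have written out in detail. There is nothing to add.
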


\smallskip

It is not difficult  to see  that Lipschitz--continuity of
subsolutions on any arc, coming from the coercivity condition in
{\bf (H$\ga$2)},  implies Lipschitz--continuity in $\Gamma$ with
respect to the geodesic distance. We provide a proof in Appendix
\ref{proofs} for reader's convenience.

\begin{Proposition} \label{Lipcont}
The family of  subsolutions to \eqref{HJg}, provided it is not
empty, is equiLipschitz continuous on $\Gamma$ with respect to the
geodesic distance $d_\Gamma$.
\end{Proposition}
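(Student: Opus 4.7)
My plan is to first establish a uniform Lipschitz bound for $u\circ\gamma$ on each $[0,1]$, then convert this parameter bound into a bound with respect to the geodesic distance $d_\Gamma$ by exploiting the $C^1$-regularity of the arcs and the fact that $\EN$ is finite.

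\textbf{Step 1 (Uniform Lipschitz bound in the parameter on each arc).} Fix any $\gamma \in \EN$. By the coercivity assumption \textbf{(H$\gamma$2)} and the continuity \textbf{(H$\gamma$1)}, there exists a constant $L_\gamma = L_\gamma(a) > 0$ such that
\[ H_\gamma(s,p) > a \qquad \text{for all } s \in [0,1] \text{ and } |p| > L_\gamma. \]
By a classical result in viscosity solutions theory (see e.g.\ \cite{BardiCapuzzo, Barles}), any viscosity subsolution $v$ of $H_\gamma(s,v') = a$ in $(0,1)$ is then locally Lipschitz in $(0,1)$ with Lipschitz constant $L_\gamma$, and extends (using continuity on $[0,1]$ which is part of the definition of subsolution) to a Lipschitz function on $[0,1]$ with the same constant. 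Setting $L := \max_{\gamma \in \EN} L_\gamma$, which is finite since $\EN$ is finite, we obtain, for \emph{every} subsolution $u$ to \eqref{HJ} and every $\gamma \in \EN$,
\[ |u(\gamma(s)) - u(\gamma(t))| \le L\,|s-t| \qquad \text{for all } s,t \in [0,1]. \]

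\textbf{Step 2 (Converting parameter estimates to arc-length estimates).} Each arc $\gamma$ is, by assumption, a regular $C^1$ simple curve on the compact interval $[0,1]$, so $|\dot\gamma|$ is continuous with
\[ 0 < \mu_\gamma := \min_{[0,1]} |\dot\gamma| \le \max_{[0,1]} |\dot\gamma| =: M_\gamma < \infty. \]
For any $s < t$ in $[0,1]$, the length of the sub-arc $\gamma([s,t])$ is $\int_s^t |\dot\gamma(r)|\,dr \ge \mu_\gamma (t-s)$; therefore
\[ |s-t| \le \mu_\gamma^{-1}\,\length(\gamma|_{[s,t]}). \]
Combining this with Step 1 yields, for two points $x=\gamma(s)$, $y=\gamma(t)$ on the same arc,
\[ |u(x)-u(y)| \le L\,|s-t| \le \frac{L}{\mu_\gamma}\,\length(\gamma|_{[s,t]}). \]

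\textbf{Step 3 (Passing to the geodesic distance on $\Gamma$).} Set $\mu := \min_{\gamma \in \EN} \mu_\gamma > 0$ (again finite since $\EN$ is finite). Given $x,y \in \Gamma$ and $\eps > 0$, by definition of $d_\Gamma$ there exists a path in $\Gamma$ joining $x$ to $y$, made up of finitely many concatenated (possibly partial) arcs $\gamma_1,\dots,\gamma_k$, whose total length is at most $d_\Gamma(x,y) + \eps$. Writing $\ell_i$ for the length of the piece of $\gamma_i$ traversed by this path, Step 2 applied on each arc gives
\[ |u(x)-u(y)| \le \sum_{i=1}^{k} \frac{L}{\mu_{\gamma_i}}\,\ell_i \le \frac{L}{\mu}\,\sum_{i=1}^k \ell_i \le \frac{L}{\mu}\,(d_\Gamma(x,y)+\eps). \]
Letting $\eps \to 0$ we conclude that every subsolution $u$ is Lipschitz on $\Gamma$ with constant $L/\mu$, which is independent of $u$. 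This proves equi-Lipschitz continuity.

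\textbf{Expected difficulty.} There is no deep obstacle here: the only point requiring a little care is Step 3, where one must know that geodesics in $\Gamma$ can be approximated arbitrarily well by finite concatenations of sub-arcs, so that the parameter-Lipschitz estimates can be patched together across vertices. Once one grants this (which follows directly from the fact that $\Gamma$ is a finite union of $C^1$ curves and $d_\Gamma$ is defined as the infimum over such concatenations), everything reduces to bookkeeping with the finite constants $L_\gamma$, $\mu_\gamma$.
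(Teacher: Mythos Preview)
Your proof is correct and follows essentially the same route as the paper: obtain a uniform Lipschitz bound on each arc from coercivity, then patch these bounds along a (near-)minimizing concatenation of arcs to control $|u(x)-u(y)|$ by $d_\Gamma(x,y)$. The only cosmetic differences are that the paper states the per-arc estimate directly with respect to Euclidean arc length (merging your Steps~1 and~2) and assumes the geodesic distance is realized by an exact minimizing chain rather than using your $\varepsilon$-approximation.
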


\smallskip
We derive from the previous result plus basic properties of
viscosity solutions the existence of the maximal subsolution
attaining a given value at a given point of the network.

\begin{Proposition}\label{predeniro} Let $a$ be such that the equation \eqref{HJ} admits subsolution in $\G$. Given $y \in \G$, $\al \in \R$, the function
\[w(x)= \max\{ u(x) \mid \;\hbox{subsolution to \eqref{HJ} with $u(y)
= \al$}\}\] is still a subsolution.
\end{Proposition}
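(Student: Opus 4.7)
The plan is to verify that $w$ satisfies the two conditions (i)--(ii) of Definition \ref{defsolsubsol} (continuity on $\Gamma$ and the viscosity subsolution property on each arc), and to justify that the supremum defining $w$ is actually attained at every point. Let $\mathcal{S}_\alpha$ denote the family of subsolutions $u$ to \eqref{HJ} with $u(y) = \alpha$.

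First I would check that $\mathcal{S}_\alpha$ is nonempty and that $w$ is finite and Lipschitz. Since \eqref{HJ} admits some subsolution $u_0$, the translated function $u_0 - u_0(y) + \alpha$ also lies in $\mathcal{S}_\alpha$ (the equations \eqref{HJg} are invariant under addition of constants). By Proposition \ref{Lipcont}, there is a common Lipschitz constant $L$ for every element of $\mathcal{S}_\alpha$ with respect to $d_\Gamma$. Combined with the normalization $u(y)=\alpha$, this gives the uniform bound $|u(x)| \leq |\alpha| + L\, d_\Gamma(x,y)$ for every $u \in \mathcal{S}_\alpha$ and $x \in \Gamma$. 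Consequently $w(x)$ is well defined, finite, and the function $w$ itself is $L$--Lipschitz on $(\Gamma, d_\Gamma)$, hence continuous.

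Next I would show that $w$ is a subsolution on each arc $\gamma \in \EN$. Setting $v_u(s):= u(\gamma(s))$ for $u\in\mathcal{S}_\alpha$, the family $\{v_u\}_{u\in\mathcal{S}_\alpha}$ consists of viscosity subsolutions of the 1-dimensional equation \eqref{HJg} on $(0,1)$, and their pointwise supremum equals $w\circ\gamma$, which is continuous by the previous step. The classical stability property for viscosity subsolutions (the upper semicontinuous envelope of a supremum of a family of subsolutions is a subsolution) then yields that $w\circ\gamma$ is a subsolution of \eqref{HJg} on $(0,1)$; continuity makes the envelope coincide with $w\circ\gamma$ itself. Together with continuity on $\Gamma$, this shows that $w$ meets the definition of subsolution to \eqref{HJ}.

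Finally, to justify writing ``$\max$'' instead of ``$\sup$'' and in particular to conclude $w \in \mathcal{S}_\alpha$, I would pick, for any fixed $x \in \Gamma$, a maximizing sequence $u_n \in \mathcal{S}_\alpha$ with $u_n(x) \to w(x)$. By the uniform Lipschitz estimate above and the compactness of $\Gamma$, Ascoli--Arzel\`a gives a subsequence converging uniformly to some continuous $u^*$ on $\Gamma$, with $u^*(y)=\alpha$ and $u^*(x)=w(x)$. By viscosity stability under uniform convergence applied on each arc, $u^*$ is a subsolution of \eqref{HJ}, so $u^* \in \mathcal{S}_\alpha$; in particular $u^*(y) = \alpha$ and $u^* \le w$ with equality at $x$, proving that the supremum is attained at $x$ and (choosing $x=y$) that $w(y)=\alpha$. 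No step here is really an obstacle: the only subtle point is guaranteeing continuity of $w$ before invoking the stability of the supremum, which is exactly what the equi-Lipschitz bound of Proposition \ref{Lipcont} provides.
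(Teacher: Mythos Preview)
Your proof is correct and follows exactly the approach the paper indicates: the paper does not spell out a proof but simply states that the result is derived ``from the previous result [Proposition \ref{Lipcont}] plus basic properties of viscosity solutions,'' and you have filled in precisely those details (equi-Lipschitz bound, stability of subsolutions under supremum, Ascoli--Arzel\`a for attainment). A minor remark: the fact that $w(y)=\alpha$ is immediate from the definition of the supremum, so the Ascoli--Arzel\`a argument is really only needed to justify the ``$\max$'' at points $x\neq y$, and in fact once you have shown $w$ is itself a subsolution with $w(y)=\alpha$, you have $w\in\mathcal{S}_\alpha$ and the maximum is attained everywhere by $w$ itself.
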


\smallskip

\medskip

\section{Strategy of the proof}\label{strategy}
The remaining of the article consists in the proof of our results
on  existence,  uniqueness and  regularity  of global (sub)solutions
to the Eikonal Hamilton-Jacobi equation on $\G$. For the reader's
convenience, a summary of all of our main results will be detailed
in section \ref{summary}.

Before starting, we believe it might be useful to provide here an outline of the forthcoming discussion.\\

In section \ref{seclocal}, we focus on the {\it local} problem on
each arc of the network. Namely, for each  $\ga\in\EN $  we study
the existence of (sub)solutions to the $1$-dimensional Eikonal
Hamilton-Jacobi equation \eqref{HJg} with boundary conditions.  In
particular:
\begin{itemize}
\item[-] We show that under suitable {\it admissibility conditions} on the boundary data, see \eqref{compa}, there exists a unique solution and we provide a representation formula (Proposition \ref{statextra}).
\item[-]  We derive a characterization of condition {iii)} in Definition \ref{defsolsubsol} in terms of this representation formula (Proposition \ref{state}).
\end{itemize}

\smallskip

In section \ref{statgraph} we concentrate on the global aspects of
the problem.
\begin{itemize}
\item[-]  We introduce a {\it discrete functional equation}
\eqref{HJa} on the abstract graph  $\XX$ and provide the
corresponding notions of solutions and subsolutions. The crucial
result linking solutions to this equation and solutions to
\eqref{HJa} is proven in
Proposition \ref{relationHJDEF}. 
\item[-] In \eqref{defcritvalue} we define {\it Ma\~n\'e
critical value} $c(\HN)$. We first prove that this is the unique
value for which solutions to the discrete functional
 equation may exist (Proposition \ref{propcritvalue}), and then that  the critical equation ($\mathcal{D}\mathit{FEc}$)
   admits indeed solutions (Theorem \ref{critico}). 
\item[-] In \eqref{Aubry1} and  \eqref{Aubry2} we define
 the {\it Aubry set} $\A_\XX^*$ and the {\it projected Aubry set}
 $\A_\XX$, which  are non-empty (Proposition \ref{Aubrynonempty}).
We prove in Proposition \ref{uniquenessset} that $\A_\XX$ is a
uniqueness set
 and provide a Hopf--Lax type representation formula for the solutions
  to ($\mathcal{D}\mathit{FEc}$) in terms of its values on $\A_\XX$.
\end{itemize}

\smallskip

The supercritical case will be discussed in parallel to the critical one (see Proposition  \ref{prop5.8}, Proposition \ref{prop5.11} and Theorem \ref{prop5.23}).\\

Finally, in section \ref{backlocal} we switch our attention back to
the immersed network:

\begin{itemize}
    \item [-] We prove in Theorem \ref{deniro} that the notion of solution can be
recovered in terms of maximal subsolution attaining a specific value
at a given point.
    \item[-] We introduce  the analogue of the Aubry set
on the network,  we show in Theorem \ref{regolauno} that all
critical subsolutions are of class $C^1$ on it and they all have the
same differential on this set.
\item[-] We show the existence of   $C^1$ critical subsolutions that are strict
outside of the Aubry set (Theorem \ref{regolaregola}).
\item[-] We  provide representation formulae  and uniqueness results with traces that are not necessarily defined on
vertices (Theorem \ref{thisistheend}).
\end{itemize}

\bigskip

\section{Local part: the Eikonal Hamilton-Jacobi equation with  boundary conditions on arcs} \label{seclocal}

In this section we focus on a single arc $\ga$ and study the family
of equations \eqref{HJg} in $(0,1)$ plus suitable boundary
conditions. We assume
 \[a \geq  a_0=\max \left \{\max_{\ga \in \EN \setminus \EN^*} a_\ga, \max_{ \ga \in  \EN^*} c_\ga \right
 \}.\]
Our aim is to find  admissible conditions on boundary data at $s=0$
and $s=1$ to get solutions of the corresponding Dirichlet problem,
to show uniqueness of such solutions  and, finally, to provide a
characterization of maximal subsolutions taking a given value at
$s=0$ via state constraint boundary conditions.

We need  specific results when $\ga$ is a closed curve  because in
this case we are solely interested to periodic (sub)solutions, as
explained in Remark \ref{cgabis}. We address the issue in Subsection
\ref{tre}. In the first subsections \ref{uno} and \ref{due} we will
not distinguish between $\ga$ closed or not, and provide an unified
presentation of the material.

The results are not new,  we write down nevertheless the
one--dimensional representation formulae, which  are  easy to handle
and allows a direct and simplified treatment of the matter. We
recall that, due to coercivity and quasiconvexity assumptions, all
subsolutions to \eqref{HJg} are Lipschitz--continuous in $[0,1]$,
and, in addition the notion of viscosity and a.e. subsolution are
equivalent. Also notice that the subsolution property is not
affected by addiction of constants.

To ease notation, we  write $H(s,p)$ instead of $H_\ga(s,p)$, and
accordingly we consider equation \eqref{HJg} with $H$ in place of
$H_\ga$.  Moreover, we   denote by $\cH$ the Hamiltonian
$H_{\widetilde{\ga}}$. We recall that the assumptions {\bf
(H$\ga$1)}--{\bf (H$\ga$4)} are in force.

\subsection{Setting of the local problem} \label{uno}
We  set for $s \in [0,1]$
\begin{eqnarray}
  \si^+_a(s) &=& \max \{p \mid H(s,p)=a\} \label{sia}\\
  \si^-_a(s) &=& \min \{p \mid H(s,p)=a\} \label{siabis}.
\end{eqnarray}
If $a > a_\ga$, we have by {\bf (HJ$\ga$3)}
\begin{equation}\label{siatris}
    (\si_a^-(s),\si_a^+(s)) = \{p \mid H(s,p) <a\}
    \qquad\hbox{for $s \in [0,1]$.}
\end{equation}
\smallskip

We deduce from assumption {\bf
 (H$\ga$4)} that if $a_\ga=a_0$
 \begin{equation}\label{siasia}
  \si^+_{a_\ga}(s)= \si^-_{a_\ga}(s) \qquad\hbox{for any $s \in
[0,1]$.}
\end{equation}

\smallskip

\begin{Proposition}\label{siac1} The functions $s \longmapsto \si^+_{a}(s)$, $s \longmapsto
\si^-_{a}(s)$ are continuous in $[0,1]$ for any $a \geq a_\ga$.
\end{Proposition}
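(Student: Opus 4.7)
The plan is to fix $a \ge a_\gamma$, a point $s_0 \in [0,1]$ and a sequence $s_n \to s_0$, and to show that every accumulation point of $\sigma^+_a(s_n)$ equals $\sigma^+_a(s_0) =: p_0$; a symmetric argument then handles $\sigma^-_a$. First I would exploit coercivity (H$\gamma$2) together with continuity of $H$ on the compact strip $[0,1]\times \R$ to produce a radius $R>0$, independent of $s$, such that $H(s,p)>a$ whenever $|p|>R$. In particular $\sigma^\pm_a(s_n)\in[-R,R]$, so the sequences are bounded and admit accumulation points.

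The ``upper half'' is essentially free: if $p_\infty$ is any accumulation point of $\sigma^+_a(s_n)$, passing to the limit in $H(s_n,\sigma^+_a(s_n))=a$ and using continuity of $H$ yields $H(s_0,p_\infty)=a$, hence $p_\infty\le p_0$.

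For the matching lower bound I would split into two cases. If the level set $\{p:H(s_0,p)=a\}$ is the singleton $\{p_0\}$, the rigidity from the previous step already forces $p_\infty=p_0$ for every accumulation point, and the argument is complete. Otherwise, quasiconvexity together with the interior condition in (H$\gamma$3) ensures that $\{p:H(s_0,p)\le a\}$ is a non-degenerate interval whose interior coincides with $\{p:H(s_0,p)<a\}$, so for every $q\in(\sigma^-_a(s_0),p_0)$ one has $H(s_0,q)<a$. By continuity, $H(s_n,q)<a$ for $n$ large, while $H(s_n,R)>a$ by construction of $R$; the intermediate value theorem then produces $p_n\in(q,R)$ with $H(s_n,p_n)=a$, whence $\sigma^+_a(s_n)\ge p_n>q$. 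Letting $n\to\infty$ and then $q\uparrow p_0$ gives the desired lower bound.

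The main conceptual difficulty is exactly the borderline case $a=a_\gamma$ at a point $s_0$ where the level set degenerates to a single value: the IVT construction above uses the non-empty interior of the sublevel set in an essential way and is unavailable there. The resolution is the rigidity noted in the first step, which already pins every accumulation point of $\sigma^+_a(s_n)$ to the (now unique) solution of $H(s_0,\cdot)=a$. The statement for $\sigma^-_a$ follows by the mirror argument, replacing $\max$ with $\min$ and reversing the direction of the IVT interval.
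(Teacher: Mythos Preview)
Your argument is correct. A small verbal slip: $[0,1]\times\R$ is not compact, so the uniform bound $R$ does not come from compactness alone; it comes from coercivity combined with continuity and quasiconvexity (the minimiser $p^*(s)$ stays bounded, and $H(s,\cdot)$ is strictly increasing past it by {\bf (H$\gamma$3)}). Likewise, your $R$ was chosen so that $H(s,p)>a$ for $|p|>R$, so in the IVT step you should use, say, $R+1$ rather than $R$ itself. Neither point affects the substance.

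The paper's proof is organised differently. It first treats $a=a_\gamma$ separately, invoking {\bf (H$\gamma$4)} to get $\sigma^+_{a_\gamma}\equiv\sigma^-_{a_\gamma}$, so the single function $\sigma^\pm_{a_\gamma}$ is continuous by a direct compactness/coercivity argument. For $a>a_\gamma$ it then observes that $\sigma^+_a(s)$ is the \emph{unique} $p$ with $H(s,p)=a$ and $p>\sigma^+_{a_\gamma}(s)$ (strict monotonicity of $H(s,\cdot)$ to the right of the minimiser, again from {\bf (H$\gamma$3)}), and lets continuity follow from this unique characterization. Your route instead runs a single sequential argument, with the case split ``level set is a singleton vs.\ not'' made pointwise at $s_0$ rather than globally in $a$. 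This is slightly more elementary and has the pleasant feature of not using {\bf (H$\gamma$4)} at all, so it covers every $a\ge a_\gamma$ even for arcs with $a_\gamma<a_0$; the paper's version tacitly relies on the ambient assumption $a\ge a_0$, which makes $a=a_\gamma$ occur only when {\bf (H$\gamma$4)} is in force.
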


\begin{proof}
It follows directly  from the continuity and the coercivity of $H$
that the function $s \longmapsto \si^+_{a_\ga}(s)=\si^-_{a_\ga}(s)$
is continuous. If $a > a_\ga$, the assertion  follows from the fact
that $\si^+_{a}(s)$, $\si^-_{a}(s)$  are univocally determined for
any $s$  by the conditions $H(s,\si^+_{a}(s))=H(s,\si^-_{a}(s))=a$
and, respectively, $\si^+_{a}(s) > \si^+_{a_\ga}(s)$ or
$\si^-_{a}(s) < \si^+_{a_\ga}(s)$.
\end{proof}

\smallskip

Notice that
\begin{equation}\label{siasia1}
  u \;\hbox{subsolution} \;\Longrightarrow \; \si^-(s) \leq u'(s) \leq
  \si^+(s) \;\;\hbox{for a.e. $s$.}
\end{equation}

We introduce four relevant functions:

\begin{eqnarray}
  s & \mapsto & \int_0^s \si^+_a(t) \, dt \label{siasia2} \\
  s & \mapsto & \int_0^s \si^-_a(t) \, dt \label{siasia3}\\
  s & \mapsto & - \int_s^1 \si^-_a(t) \, dt \label{siasia4}\\
  s & \mapsto & -\int_s^1 \si^+_a(t) \, dt \label{siasia5}.
\end{eqnarray}

\medskip
\begin{Remark}\label{four}According to \eqref{siasia1}, the function in \eqref{siasia2}
is the maximal (sub)solution to \eqref{HJg} vanishing at $s=0$, and
the one in \eqref{siasia3} the minimal (sub)solution  vanishing at
$s=0$. Analogously, the function defined in \eqref{siasia4} is  the
maximal (sub)solution vanishing at $s=1$, and  the one in
\eqref{siasia5} the minimal (sub)solution  vanishing at $s=1$.  All
of these functions
are of class $C^1$ because of Proposition \ref{siac1}.\\
We remark that when we write {\em maximal (sub)solution} et similia,
means that it is maximal in the class of subsolution to \eqref{HJg}
with a given property and it is, in addition, a solution to the
equation.
\end{Remark}
\smallskip

If $a=a_\ga$, it follows from \eqref{siasia} that  all of the above
functions coincide up to an additive
constant. We can state the following result.\\

\smallskip

\begin{Proposition}\label{equnique} The (sub)solution  to  \eqref{HJg}, with $a =
a_\ga$ is unique up to additive constants.\\
\end{Proposition}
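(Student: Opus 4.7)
The plan is to show that at the critical level $a = a_\ga$ the relation \eqref{siasia1} pins down the derivative of any subsolution pointwise a.e., forcing it to coincide with a single explicit primitive up to an additive constant.

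First, I would record the consequence of assumption \textbf{(H$\ga$4)} already observed in \eqref{siasia}, namely
\[
\si^+_{a_\ga}(s) \;=\; \si^-_{a_\ga}(s) \qquad \text{for every } s\in[0,1].
\]
Call this common continuous function $\si_{a_\ga}$ (the continuity is furnished by Proposition \ref{siac1}). Then I would invoke the general inclusion \eqref{siasia1}: any subsolution $u$ of \eqref{HJg} is Lipschitz on $[0,1]$ (by coercivity \textbf{(H$\ga$2)}), hence differentiable a.e., and satisfies
\[
\si^-_{a_\ga}(s) \;\leq\; u'(s) \;\leq\; \si^+_{a_\ga}(s) \qquad \text{for a.e.\ } s\in[0,1].
\]

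Combining the last two displays, for any subsolution $u$ one necessarily has $u'(s) = \si_{a_\ga}(s)$ a.e. Integrating from $0$ and using absolute continuity, I obtain
\[
u(s) \;=\; u(0) + \int_0^s \si_{a_\ga}(t)\,dt \qquad \text{for every } s\in[0,1],
\]
so $u$ is determined on $[0,1]$ by its value at $0$; two subsolutions therefore differ by a constant. Since every solution is in particular a subsolution, the same conclusion applies to solutions, giving the stated uniqueness up to additive constants.

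There is no real obstacle here beyond correctly citing \eqref{siasia} as a consequence of \textbf{(H$\ga$4)} at the critical level and noting that the a.e.\ pinching of $u'$ between the two equal bounds automatically upgrades to the full primitive identity by absolute continuity. The main conceptual point is that \textbf{(H$\ga$4)} is exactly what collapses the interval of admissible slopes $[\si^-_{a_\ga}(s),\si^+_{a_\ga}(s)]$ to a single point, which is the geometric reason a strict subsolution cannot exist at the level $a_\ga$ and why the subsolution class reduces to a singleton modulo constants (cf. Remark \ref{trump}).
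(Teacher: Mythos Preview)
Your argument is correct and is exactly the approach the paper has in mind: the sentence immediately preceding the proposition already notes that, in view of \eqref{siasia}, the four primitive functions \eqref{siasia2}--\eqref{siasia5} coincide up to additive constants at the level $a=a_\ga$, and your proof simply spells out how \eqref{siasia} combined with the sandwich \eqref{siasia1} forces $u'=\si_{a_\ga}$ a.e.\ for every subsolution. One minor point worth making explicit: \eqref{siasia} is stated in the paper under the hypothesis $a_\ga=a_0$ (this is where \textbf{(H$\ga$4)} applies), but since the whole section works under the standing assumption $a\geq a_0\geq a_\ga$, the case $a=a_\ga$ automatically forces $a_\ga=a_0$, so your citation of \eqref{siasia} is legitimate.
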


\smallskip

From the properties of the solutions in \eqref{siasia2} and
\eqref{siasia3}, we directly derive
 a necessary
condition ({\it admissibility condition}) that two boundary data at
$0$
and $1$ must satisfy in order to  correspond to the values at the endpoints of  a subsolution to  \eqref{HJg}. \\

\begin{Lemma}  Assume that there  is a subsolution to \eqref{HJg} taking the values  $\al$ and $\be$ at $0$ and $1$,
then
\begin{equation}\label{compa}
   \int_0^1 \si_a^-(t)\,dt\leq  \be - \al \leq \int_0^1 \si_a^+(t)\,dt. \\
\end{equation}
\end{Lemma}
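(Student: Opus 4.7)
The proof should be very short, essentially a single integration argument. My plan is as follows.

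First, I would recall that by the coercivity assumption \textbf{(H$\ga$2)} together with Proposition \ref{Lipcont}, any subsolution $u$ to \eqref{HJg} is Lipschitz continuous on $[0,1]$. In particular $u$ is differentiable almost everywhere, and the viscosity subsolution property is equivalent to the a.e. subsolution property (this is the standard fact for coercive Hamiltonians in one dimension, already noted in the text right before Subsection \ref{uno}). Hence
\[
H\bigl(s, u'(s)\bigr) \leq a \qquad \text{for a.e. } s \in (0,1).
\]
By the quasiconvexity condition \textbf{(H$\ga$3)}, the sublevel set $\{p : H(s,p) \leq a\}$ is an interval; combining this with definitions \eqref{sia}--\eqref{siabis} (and relation \eqref{siatris} when $a > a_\ga$, or \eqref{siasia} when $a = a_\ga = a_0$), this interval is exactly $[\sigma_a^-(s), \sigma_a^+(s)]$. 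Therefore
\[
\sigma_a^-(s) \leq u'(s) \leq \sigma_a^+(s) \qquad \text{for a.e. } s \in (0,1),
\]
which is precisely the implication \eqref{siasia1} stated earlier.

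Second, since $u$ is Lipschitz, it is absolutely continuous, so the fundamental theorem of calculus applies and
\[
\beta - \alpha = u(1) - u(0) = \int_0^1 u'(t)\,dt.
\]
Integrating the pointwise bound above over $(0,1)$ yields
\[
\int_0^1 \sigma_a^-(t)\,dt \;\leq\; \int_0^1 u'(t)\,dt \;=\; \beta - \alpha \;\leq\; \int_0^1 \sigma_a^+(t)\,dt,
\]
which is \eqref{compa}.

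There is essentially no obstacle: the only subtlety to mention is the borderline case $a = a_\ga$, where \eqref{siasia} forces $\sigma_a^+(s) = \sigma_a^-(s)$ for all $s$, but then the chain of inequalities collapses to equalities and the claim is trivially true (it just says $\beta - \alpha$ is uniquely determined, consistent with the uniqueness statement in Proposition \ref{equnique}). As an alternative packaging that avoids invoking the viscosity--a.e. equivalence, one can instead appeal directly to Remark \ref{four}: the function $s \mapsto \alpha + \int_0^s \sigma_a^+(t)\,dt$ is the maximal subsolution with value $\alpha$ at $0$, giving the upper bound by evaluating at $s=1$; and $s \mapsto \beta - \int_s^1 \sigma_a^-(t)\,dt$ is the maximal subsolution with value $\beta$ at $1$, giving the lower bound by evaluating at $s=0$. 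Either route produces \eqref{compa} in a couple of lines.
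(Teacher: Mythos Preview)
Your proof is correct and matches the paper's approach. The paper states the lemma as a direct consequence of the properties of the functions in \eqref{siasia2} and \eqref{siasia3} (i.e., Remark \ref{four}), which rests precisely on the pointwise bound \eqref{siasia1} that you spell out and integrate; your ``alternative packaging'' via Remark \ref{four} is exactly the one-line argument the paper has in mind.
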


\medskip

The above condition is actually also sufficient:

\smallskip

\begin{Proposition}\label{statextra} Given   boundary data
$\al$,  $\be$,  satisfying \eqref{compa} the function $w$
\begin{equation}\label{compa00}
  s \longmapsto w(s):=\min \left \{ \al +
\int_0^s \si_a^+(t)\,dt , \, \be - \int_s^1 \si_a^-(t)\,dt \right \}
\end{equation}
is the unique solution to \eqref{HJg} taking the values $\al$ at
$s=0$, and  $\be$ at
$s=1$.
\end{Proposition}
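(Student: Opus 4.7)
The plan is to verify that $w$ takes the prescribed boundary values, then that it is simultaneously a sub- and supersolution, and finally address uniqueness.

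First I would check the boundary values directly from the admissibility condition \eqref{compa}. At $s=0$ the first argument of the $\min$ equals $\alpha$, while the second equals $\beta-\int_0^1\sigma_a^-(t)\,dt\geq \alpha$ by the left inequality in \eqref{compa}, so $w(0)=\alpha$; the case $s=1$ is symmetric using the right inequality in \eqref{compa}. Next I would write $w=\min\{w_1,w_2\}$, where $w_1(s):=\alpha+\int_0^s\sigma_a^+(t)\,dt$ and $w_2(s):=\beta-\int_s^1\sigma_a^-(t)\,dt$ are the $C^1$ solutions of \eqref{HJg} identified in Remark~\ref{four} (they are respectively the maximal subsolution with value $\alpha$ at $0$ and the maximal subsolution with value $\beta$ at $1$). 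Since the pointwise minimum of two viscosity subsolutions is a subsolution, $w$ is a subsolution in $(0,1)$.

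For the supersolution inequality, fix $s_0\in(0,1)$ and let $\psi\in C^1$ be a subtangent to $w$ at $s_0$. If $w_1(s_0)<w_2(s_0)$, then $w\equiv w_1$ in a neighborhood of $s_0$, and the supersolution property is inherited from $w_1$; the case $w_2(s_0)<w_1(s_0)$ is analogous. If instead $w_1(s_0)=w_2(s_0)=w(s_0)$, the relations $\psi\le w\le w_1$ with $\psi(s_0)=w_1(s_0)$ show that $\psi$ is a $C^1$ subtangent to $w_1$ at $s_0$; since $w_1$ is itself a solution, $H(s_0,\psi'(s_0))\geq a$. This confirms $w$ is a solution.

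For uniqueness, let $u$ be any solution of \eqref{HJg} with $u(0)=\alpha$ and $u(1)=\beta$. Being in particular a subsolution, Remark~\ref{four} gives $u\le w_1$ and $u\le w_2$, so $u\le w$. To prove the reverse inequality, suppose $\{u<w\}$ is non-empty and consider a connected component $(s_-,s_+)\subset(0,1)$ of this open set, so that $u(s_\pm)=w(s_\pm)$. On $(s_-,s_+)$, $w$ is either $w_1$ or $w_2$ on each maximal subinterval where the minimum is attained by the corresponding one; up to splitting, I may assume $w=w_1$ on the whole $(s_-,s_+)$ (the other case being symmetric by reversing the arc via Proposition~\ref{inverti}). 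Then $u(s_-)=w_1(s_-)$ and $u(s_+)=w_1(s_+)$, while $u'\le\sigma_a^+$ a.e. gives $u(s_+)-u(s_-)\le\int_{s_-}^{s_+}\sigma_a^+(t)\,dt=w_1(s_+)-w_1(s_-)$, forcing equality throughout, hence $u\equiv w_1$ on $(s_-,s_+)$, contradicting $u<w=w_1$ there.

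The main obstacle I expect is the uniqueness step: the difficulty is that at level $a$ the equation need not admit strict subsolutions, so a standard comparison principle via doubling of variables is not at hand. The argument above bypasses this by reducing the question on each component of $\{u<w\}$ to a pointwise saturation of the inequality $u'\le\sigma_a^+$ (or $u'\ge\sigma_a^-$), which is available since $u$ is a subsolution and the boundary values of $u$ on the component coincide with those of $w_1$ (or $w_2$).
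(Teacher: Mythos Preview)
Your argument for the boundary values and for $w$ being a viscosity solution is correct and essentially matches the paper: $w$ is the minimum of two $C^1$ solutions, hence a subsolution, and the pointwise case analysis gives the supersolution property.

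The gap is in the uniqueness step. Your argument for $u\ge w$ uses only the subsolution property of $u$ (the a.e.\ bound $u'\le\sigma_a^+$), never the supersolution property. This cannot suffice: there are many subsolutions with the same boundary data that lie strictly below $w$. Concretely, take $H(s,p)=|p|$, $a=1$, $\alpha=\beta=0$; then $w(s)=\min\{s,1-s\}$, while $u\equiv 0$ is a subsolution with the same boundary values and $u<w$ on the whole of $(0,1)$. In this situation the unique crossover point $s^*$ (where $w_1=w_2$) lies in the interior of the single component $(0,1)$ of $\{u<w\}$, and your ``up to splitting'' move fails: after splitting at $s^*$ you no longer have $u(s^*)=w(s^*)$, so the saturation argument on each piece does not apply. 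Since $w_1-w_2$ is strictly increasing when $a>a_\gamma$, this crossover difficulty is generic, not pathological.

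The paper handles uniqueness differently: it separates the case $a=a_\gamma$, where Proposition~\ref{equnique} gives uniqueness up to constants directly, from the case $a>a_\gamma$, where it observes that $s\mapsto\int_0^s\sigma_{a_\gamma}^+(t)\,dt$ is a \emph{strict} subsolution of \eqref{HJg} (since $H(s,\sigma_{a_\gamma}^+(s))=a_\gamma<a$) and then invokes the standard comparison principle \`a la Ishii for the Dirichlet problem. To repair your direct approach you would need to bring in the supersolution property of $u$ at the crossover point, or else fall back on this strict-subsolution comparison.
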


The proof is in the Appendix  \ref{proofs}.\\

\subsection{Maximal subsolutions}\label{due}

The main result of this section is:

\begin{Proposition}\label{state}
 Assume that $w$ is a solution in $(0,1)$ to \eqref{HJg} for $a
 \geq a_\ga$,
continuously extended up to the boundary.  If
\begin{equation}\label{state0}
    H(1, \varphi'(1)) \geq a \qquad\hbox{for any $C^1$ supertangent $\varphi$ to $w$ constrained to $[0,1]$},
\end{equation}
then $w$ is the maximal (sub)solution taking the value $w(0)$ at
$0$. Namely:
\begin{equation}\label{state1}
   w(s)= w(0) + \int_0^s \si_a^+(t) \, dt \qquad \hbox{for $s \in
[0,1]$.}
\end{equation}
Conversely, if a solution $w$ is of the form \eqref{state1}, then
condition \eqref{state0} holds true.\\
\end{Proposition}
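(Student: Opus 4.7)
The plan is to exploit Proposition \ref{statextra}: since $w$ is a solution continuously extended to $[0,1]$ with boundary values $\alpha := w(0)$ and $\beta := w(1)$, it must coincide with the explicit formula \eqref{compa00}. Consequently, proving that $w$ agrees with the maximal subsolution $w(0) + \int_0^s \sigma^+_a(t)\,dt$ reduces to the scalar identity $\beta - \alpha = \int_0^1 \sigma^+_a(t)\,dt$; this reformulation turns both implications into a derivative check at the endpoint $s=1$.

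For the forward implication, I argue by contradiction, assuming $\beta - \alpha < \int_0^1 \sigma^+_a(t)\,dt$. Then on some left neighbourhood of $s=1$ the second branch realises the minimum in \eqref{compa00}, so $w(s) = \beta - \int_s^1 \sigma^-_a(t)\,dt$ there, and in particular $w'(1^-) = \sigma^-_a(1)$. The case $a = a_\ga$ is ruled out at once: {\bf (H$\ga$4)} yields $\sigma^-_a \equiv \sigma^+_a$ via \eqref{siasia}, and Proposition \ref{equnique} forces the desired form of $w$. When $a > a_\ga$, one has $\sigma^-_a(1) < \sigma^+_a(1)$, and by {\bf (H$\ga$3)} together with \eqref{siatris} the open interval $(\sigma^-_a(1), \sigma^+_a(1))$ is contained in $\{p : H(1,p) < a\}$. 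Picking any $p_0$ in this interval and setting $\varphi(s) := w(1) + p_0(s-1)$, a first-order expansion yields
\[
\varphi(s) - w(s) = (p_0 - \sigma^-_a(1))(s-1) + o(s-1),
\]
which is strictly negative on a left neighbourhood of $s=1$. Hence $\varphi$ is an admissible $C^1$ test function for \eqref{state0}, yet $H(1, \varphi'(1)) = H(1, p_0) < a$, contradicting the hypothesis.

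For the converse, assume $w(s) = w(0) + \int_0^s \sigma^+_a(t)\,dt$. Then $w \in C^1([0,1])$ with $w'(1) = \sigma^+_a(1)$. For every $\varphi$ admissible in \eqref{state0} (so that $\varphi(1) = w(1)$ and $\varphi \leq w$ on $(1-\delta, 1]$), the non-negativity of $w - \varphi$ together with $(w-\varphi)(1) = 0$ forces $\varphi'(1) \geq w'(1) = \sigma^+_a(1)$ by passing to the left derivative; by {\bf (H$\ga$3)} and \eqref{siatris}, every $p \geq \sigma^+_a(1)$ satisfies $H(1, p) \geq a$, whence $H(1, \varphi'(1)) \geq a$.

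The main obstacle is the construction of the offending test function in the forward direction: it hinges simultaneously on the explicit representation \eqref{compa00}, on the condition {\bf (H$\ga$3)} (which guarantees that $\{H(1,\cdot) < a\}$ is the open interval between the two roots $\sigma^\pm_a(1)$ of $H(1,\cdot) = a$), and on the fact that $w$ genuinely realises the left slope $\sigma^-_a(1)$ at $s=1$. The degenerate regime $a = a_\ga$ must be segregated and handled via Proposition \ref{equnique}, because there $\sigma^-_a \equiv \sigma^+_a$ leaves no room for an affine competitor with violating slope.
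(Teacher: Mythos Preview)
Your proof is correct and follows essentially the same approach as the paper's own argument: both segregate the degenerate case $a=a_\ga$ via Proposition~\ref{equnique}, invoke the explicit representation from Proposition~\ref{statextra}, and in the forward direction build an affine test function $\varphi(s)=w(1)+p_0(s-1)$ with $H(1,p_0)<a$ that touches $w$ from below at $s=1$. The only noteworthy difference is a minor streamlining: you observe directly from the strict inequality $\alpha+\int_0^1\si_a^+>\beta$ that the second branch in \eqref{compa00} realises the minimum on a left neighbourhood of $1$, and then use a first-order expansion at $s=1$ to verify that $\varphi$ lies below $w$; the paper instead proves an intermediate monotonicity claim (once the second branch is active at some $s_0$, it stays active on $[s_0,1]$) and then checks $\varphi\le w$ on that whole interval via the integral inequality $p_0>\si^-_a(s)$. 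Your route is slightly more economical, the paper's is slightly more robust in that it does not rely on differentiability of $w$ at the endpoint, but both are perfectly valid and the underlying idea is identical.
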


The proof is in the Appendix \ref{proofs}.

\smallskip

We fix $s_0 \in (0,1)$, by slightly generalizing the formulae
provided in the previous result and  arguing separately in the two
subintervals $[0,s_0]$ and $[s_0,1]$, we get:

\begin{Corollary}\label{fourbis} Let $s_0 \in (0,1)$. For any $\al \in \R$, the function
\[s \longmapsto  \left \{\begin{array}{cc}
                 \al - \int_s^{s_0} \si_a^-(t)\,dt & \quad\hbox{for $s \leq s_0$} \\
                 & \\
                  \al + \int_{s_0}^s \si_a^+(t)\,dt & \quad\hbox{for $s > s_0$} \\
                \end{array} \right .\]
 is the maximal subsolution to  \eqref{HJg} taking the value $\al$
 at $s_0$. It is in addition solution in $(0,1) \setminus \{s_0\}$,
 but the solution property fails at $s_0$, unless  $a = a_\ga$.

\end{Corollary}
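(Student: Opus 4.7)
The strategy is to split $[0,1]$ at $s_0$ and apply Proposition \ref{state} (together with its symmetric counterpart obtained via Proposition \ref{inverti}) separately on each of the two half--intervals $[0,s_0]$ and $[s_0,1]$, then glue the resulting pieces.

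First I would verify that the candidate function $w$ is continuous on $[0,1]$ (both branches agree at $s_0$ with value $\al$), and that on each open subinterval $(0,s_0)$ and $(s_0,1)$ it coincides with one of the $C^1$ extremal functions described in Remark \ref{four}. Hence $w$ is a classical, and a fortiori a viscosity, solution of \eqref{HJg} on $(0,1)\setminus\{s_0\}$, and in particular a subsolution on the whole $(0,1)$ by locality of the a.e./viscosity subsolution property for Lipschitz functions.

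Next I would establish maximality among subsolutions taking the value $\al$ at $s_0$. Given any such subsolution $v$, the a.e. bound $\si_a^-(t)\leq v'(t)\leq \si_a^+(t)$ from \eqref{siasia1} yields, upon integrating from $s$ to $s_0$ for $s<s_0$ and from $s_0$ to $s$ for $s>s_0$, the pointwise inequality $v(s)\leq w(s)$ on both halves. Equivalently, this can be read off directly from Proposition \ref{state} applied on $[s_0,1]$ with $s_0$ playing the role of $0$, and from its mirror version on $[0,s_0]$ obtained by reversing the parametrization via Proposition \ref{inverti}.

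It remains to analyse the solution property at $s_0$. If $a=a_\ga$, then \eqref{siasia} gives $\si_a^-(s_0)=\si_a^+(s_0)$, so $w$ is actually $C^1$ at $s_0$ and satisfies the equation there. If instead $a>a_\ga$, then by \eqref{siatris} the left and right derivatives of $w$ at $s_0$ are distinct with $\si_a^-(s_0)<\si_a^+(s_0)$, producing a convex corner; any affine function with slope $p$ in the open interval $(\si_a^-(s_0),\si_a^+(s_0))$ is then a $C^1$ subtangent to $w$ at $s_0$ satisfying $H(s_0,p)<a$, which violates the viscosity supersolution test. The main point requiring care is precisely this last step: one has to identify the admissible slopes of $C^1$ subtangents at the corner and use \eqref{siatris} to exhibit one for which the Hamiltonian test fails, thereby confirming that the solution property breaks down at $s_0$ whenever $a>a_\ga$.
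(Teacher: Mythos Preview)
Your argument is correct and follows precisely the route indicated by the paper, which merely states before the corollary that one should argue separately on the two subintervals $[0,s_0]$ and $[s_0,1]$ by slightly generalizing Proposition~\ref{state}. You have supplied the missing details faithfully, including the analysis of the corner at $s_0$ (vacuous supertangent test for the subsolution property, and the explicit subtangent with slope in $(\si_a^-(s_0),\si_a^+(s_0))$ violating the supersolution test when $a>a_\ga$), which the paper does not spell out.
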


\medskip

\begin{Remark}\label{five} In the light of Proposition
\ref{inverti} and  Remark \ref{four}, it is apparent that the
maximal solution to $H^\checkmark =a$  vanishing at $s=0$ is given
by
\[s \mapsto -\int_{1-s}^1 \si_a^-(t) \,dt.\]
This function satisfies the state constraint boundary condition at
$s =1$.
\end{Remark}

\bigskip

\subsection{Closed arcs}\label{tre}
In this subsection we assume that $\ga$ is a closed curve. Keeping
in mind Remark \ref{cgabis}, we aim at showing the existence of
periodic (sub)solution for any $a$  or, in other terms, that
periodic boundary conditions at $s=0$ and $s=1$ are admissible in
the sense of \eqref{compa}

Recall that $a \geq a_0 \geq c_\ga$.  We derive further information
in the case where {$a=a_0=c_\ga$. We will exploit the existence of
periodic subsolutions at the
 level $c_\ga$ in $(0,1)$, say, to fix ideas, vanishing at $0$
and $1$, as  pointed out in Remark \ref{cga}. These periodic
subsolutions are {\it sandwiched} in   between the function in
\eqref{siasia2} and the one in \eqref{siasia3}, according to Remark
\ref{four}. We derive:
\smallskip

\begin{Lemma}\label{six} We have
\begin{equation}\label{aubryall1}
   \int_0^1 \si_a^- (t) \,dt \leq 0 \leq \int_0^1 \si_a^+ (t) \,dt,
\end{equation}
and both the inequalities are strict if $a > c_\ga$.
\end{Lemma}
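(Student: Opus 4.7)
The plan is to combine the existence of a periodic subsolution at the critical level $c_\ga$, guaranteed by Remark \ref{cga}, with the extremality characterization recorded in Remark \ref{four}.

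Pick such a periodic subsolution $v$ of $H_\ga = c_\ga$ in $(0,1)$ and, up to an additive constant, assume $v(0) = v(1) = 0$. Since $a \geq c_\ga$, the function $v$ is automatically a subsolution of \eqref{HJg}. By Remark \ref{four}, any subsolution of \eqref{HJg} vanishing at $s = 0$ is sandwiched between $s \mapsto \int_0^s \si_a^-(t)\,dt$ and $s \mapsto \int_0^s \si_a^+(t)\,dt$. Evaluating this sandwich at $s = 1$ with $v(1) = 0$ immediately delivers \eqref{aubryall1}.

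For the strict case $a > c_\ga$, the key observation is that the momentum interval $[\si_a^-(s),\si_a^+(s)]$ strictly contains $[\si_{c_\ga}^-(s),\si_{c_\ga}^+(s)]$ uniformly in $s$. Indeed, assumption (H$\ga$3) identifies the open sublevel set $\{p : H_\ga(s,p) < a\}$ with the open interval $(\si_a^-(s),\si_a^+(s))$, and since $H_\ga(s,\si_{c_\ga}^\pm(s)) = c_\ga < a$ one obtains $\si_a^-(s) < \si_{c_\ga}^-(s) \leq \si_{c_\ga}^+(s) < \si_a^+(s)$ for every $s \in [0,1]$. Continuity of these selectors (Proposition \ref{siac1}) together with compactness of $[0,1]$ yields a uniform positive gap between them; integrating this gap and combining with the non-strict inequalities already established at the level $c_\ga$ produces the strict inequalities claimed.

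The one genuinely delicate step is the strict inclusion of level sets: this is precisely the point where (H$\ga$3) enters, ruling out a flat outer boundary of the sublevels that would otherwise collapse the inequality. Everything else reduces to the sandwich bounds from Remark \ref{four} applied first at the level $c_\ga$ (to kickstart the inequalities) and then at the level $a$ (to propagate them).
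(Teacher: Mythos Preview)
Your argument is correct and follows exactly the route the paper indicates in the paragraph preceding the Lemma: take a periodic subsolution at level $c_\ga$ vanishing at the endpoints, note it remains a subsolution at level $a\ge c_\ga$, and sandwich it between the extremal subsolutions of Remark~\ref{four} to obtain \eqref{aubryall1}. Your treatment of the strict case via the pointwise inclusion $\si_{c_\ga}^\pm(s)\in(\si_a^-(s),\si_a^+(s))$ coming from \eqref{siatris} simply spells out what the paper leaves implicit.
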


This in turn implies in view of  \eqref{compa}

\begin{Corollary} There are periodic
 solutions to \eqref{HJg} in $(0,1)$.
\end{Corollary}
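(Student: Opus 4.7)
The plan is to apply the admissibility criterion in Proposition \ref{statextra} with coincident boundary data. Fix any $\alpha \in \R$ and try to produce a solution to \eqref{HJg} on $(0,1)$ attaining the value $\alpha$ both at $s=0$ and at $s=1$; such a solution is periodic by definition (see Remark \ref{cgabis}). For this it suffices to check that the pair $(\alpha,\alpha)$ satisfies the admissibility condition \eqref{compa}, which in this case reads
\[
\int_0^1 \sigma_a^-(t)\,dt \le \alpha - \alpha = 0 \le \int_0^1 \sigma_a^+(t)\,dt.
\]

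But this is exactly the content of Lemma \ref{six} (which I may invoke as stated in the excerpt, including the strict inequalities when $a>c_\ga$). Hence \eqref{compa} holds, and Proposition \ref{statextra} produces the function
\[
w(s)=\min\!\left\{\alpha+\int_0^s \sigma_a^+(t)\,dt,\ \alpha-\int_s^1 \sigma_a^-(t)\,dt\right\}
\]
as the unique solution to \eqref{HJg} in $(0,1)$ with $w(0)=w(1)=\alpha$, which is periodic.

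There is essentially no obstacle: the real work sits in Lemma \ref{six}, and the corollary is a direct translation of that inequality through the admissibility characterization. The only thing worth stating explicitly is that a periodic (sub)solution as intended here means precisely a (sub)solution on $(0,1)$ with equal boundary values at $s=0$ and $s=1$, so that the application of Proposition \ref{statextra} with $\beta=\alpha$ yields the claim.
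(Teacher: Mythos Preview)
Your argument is correct and matches the paper's own reasoning exactly: the corollary is stated immediately after Lemma~\ref{six} with the sole justification ``This in turn implies in view of \eqref{compa}'', which is precisely the reduction you spell out---Lemma~\ref{six} verifies \eqref{compa} for $\beta=\alpha$, and Proposition~\ref{statextra} then yields the periodic solution.
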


\smallskip
Moreover:

\begin{Proposition}\label{aubryall}
\[ \min \left \{- \int_0^1 \si_{c_\ga}^- (t) \,dt,  \;\int_0^1 \si_{c_\ga}^+ (t)
\,dt \right \} =0.\]
\end{Proposition}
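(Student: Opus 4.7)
The plan is to argue by contradiction with a dichotomy depending on whether $a_\ga < c_\ga$ or $a_\ga = c_\ga$, the latter being possible since we work in the regime $a = a_0 = c_\ga$. Setting
\[
\phi(a) := -\int_0^1 \si_a^-(t)\,dt, \qquad \psi(a) := \int_0^1 \si_a^+(t)\,dt
\]
for $a \geq a_\ga$, Lemma \ref{six} gives $\phi(c_\ga), \psi(c_\ga) \geq 0$, so the claim reduces to showing $\min\{\phi(c_\ga), \psi(c_\ga)\} = 0$.

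The technical heart is a preliminary continuity step: I would first check that, for each fixed $s \in [0, 1]$, the maps $a \mapsto \si_a^\pm(s)$ are continuous on $(a_\ga, +\infty)$. If $a_n \to \bar a > a_\ga$, coercivity bounds $\si_{a_n}^+(s)$; any limit point $p^*$ satisfies $H_\ga(s, p^*) = \bar a$ by continuity, so $p^* \leq \si_{\bar a}^+(s)$; and condition {\bf (H$\ga$3)}, which forbids flat tops of the sublevel sets, lets one approach $\si_{\bar a}^+(s)$ from below by points where $H_\ga(s, \cdot) < \bar a < a_n$ eventually, forcing $p^* \geq \si_{\bar a}^+(s)$. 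An analogous argument handles $\si_a^-$, and dominated convergence transfers continuity to $\phi$ and $\psi$.

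In the case $a_\ga < c_\ga$, I would exploit the definition of $c_\ga$: for every $a \in (a_\ga, c_\ga)$ there is no periodic subsolution to $H_\ga = a$, and by Proposition \ref{statextra} applied with $\al = \be = 0$ this is equivalent to $0 \notin [-\phi(a), \psi(a)]$, i.e., $\min\{\phi(a), \psi(a)\} < 0$. Letting $a \uparrow c_\ga$ and using the continuity just established, I obtain $\min\{\phi(c_\ga), \psi(c_\ga)\} \leq 0$, which together with Lemma \ref{six} yields equality to $0$.

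In the case $a_\ga = c_\ga$, the standing hypothesis $a_0 = c_\ga$ gives $a_\ga = a_0$, so condition {\bf (H$\ga$4)} forces $s \mapsto \min_p H_\ga(s, p)$ to be constant, necessarily equal to $a_\ga$. Combined with {\bf (H$\ga$3)} this makes each sublevel set $\{p : H_\ga(s, p) \leq a_\ga\}$ a singleton, so $\si_{a_\ga}^+(s) \equiv \si_{a_\ga}^-(s)$ and therefore $\phi(c_\ga) = \psi(c_\ga)$; Lemma \ref{six} then pins both of them to $0$. The main difficulty of the whole argument is the continuity step above, since without {\bf (H$\ga$3)} one could have jumps of $\si_a^\pm(s)$ at critical values of $a$, which would invalidate the limiting procedure driving the main case.
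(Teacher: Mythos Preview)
Your argument in the main case $c_\ga > a_\ga$ is correct and genuinely different from the paper's. The paper argues directly by contradiction at level $c_\ga$: if both $\phi(c_\ga)>0$ and $\psi(c_\ga)>0$, one picks $\la\in(0,1)$ with $\int_0^1[\la\,\si^+_{c_\ga}+(1-\la)\,\si^-_{c_\ga}]\,dt=0$; since $c_\ga>a_\ga$ forces $\si^-_{c_\ga}(s)<\si^+_{c_\ga}(s)$ for every $s$, the primitive of this convex combination is a \emph{strict} periodic subsolution at level $c_\ga$, contradicting the minimality of $c_\ga$. This is shorter and avoids the continuity step you develop. Your limiting route, on the other hand, makes transparent why the minimum vanishes exactly at $c_\ga$ (negative just below, nonnegative at $c_\ga$), and the continuity of $a\mapsto\int_0^1\si^+_a$ you establish along the way is precisely Lemma~\ref{prestable}. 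In the case $a_\ga=c_\ga$ your treatment coincides with the paper's, both resting on \eqref{siasia} via {\bf (H$\ga$4)}.

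Two minor corrections. First, in your case $a_\ga=c_\ga$ there is a sign slip: from $\si^+_{a_\ga}\equiv\si^-_{a_\ga}$ one deduces $\phi(c_\ga)=-\psi(c_\ga)$, not $\phi(c_\ga)=\psi(c_\ga)$; combined with $\phi,\psi\ge 0$ from Lemma~\ref{six} this still forces both to vanish, so the conclusion survives. Second, the equivalence ``no periodic subsolution at level $a$ $\Leftrightarrow$ $\min\{\phi(a),\psi(a)\}<0$'' is exactly the admissibility condition~\eqref{compa} (together with its converse, implicit in the construction of Proposition~\ref{statextra}); citing Proposition~\ref{statextra} itself, which concerns solutions rather than subsolutions, is slightly off target.
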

The proof is in the Appendix \ref{proofs}.

\medskip
From the previous result plus Proposition \ref{state} and Remark
\ref{five}, we derive the following.

\begin{Corollary}\label{seven} Let $a =c_\ga$  and $\al \in \R$;  then,  either the maximal solution
to $H=a$ taking the value $\al$ at $s=0$ or  the maximal solution to
$H^\checkmark =a$ taking the value $\al$ at $s=0$ are periodic.
\end{Corollary}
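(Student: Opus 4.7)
The plan is to combine the explicit formulas for the maximal solutions (from Proposition \ref{state} and Remark \ref{five}) with the equality in Proposition \ref{aubryall}, and read off periodicity as a vanishing-integral condition.

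First I would write down the two candidate maximal solutions explicitly. By Proposition \ref{state} (formula \eqref{state1}), the maximal solution $w$ to $H=a$ taking the value $\al$ at $s=0$ is
\[ w(s) = \al + \int_0^s \si_a^+(t)\,dt, \qquad s \in [0,1]. \]
By Remark \ref{five}, the maximal solution to $\cH = a$ vanishing at $s=0$ is $s \mapsto -\int_{1-s}^1 \si_a^-(t)\,dt$; adding the constant $\al$, the maximal solution $\check w$ to $\cH=a$ with $\check w(0)=\al$ is
\[ \check w(s) = \al - \int_{1-s}^1 \si_a^-(t)\,dt, \qquad s \in [0,1]. \]

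Next I would translate periodicity (i.e. $w(0)=w(1)$, respectively $\check w(0)=\check w(1)$) into the two integral conditions
\[ w(1)-w(0) = \int_0^1 \si_a^+(t)\,dt, \qquad \check w(1)-\check w(0) = -\int_0^1 \si_a^-(t)\,dt. \]
Hence $w$ is periodic iff $\int_0^1 \si_a^+(t)\,dt = 0$, and $\check w$ is periodic iff $-\int_0^1 \si_a^-(t)\,dt = 0$.

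Finally, at the level $a = c_\ga$ Proposition \ref{aubryall} gives
\[ \min\left\{-\int_0^1 \si_{c_\ga}^-(t)\,dt,\ \int_0^1 \si_{c_\ga}^+(t)\,dt\right\} = 0, \]
so at least one of the two integrals vanishes, and therefore at least one of $w$, $\check w$ is periodic. There is essentially no obstacle here: the statement is a direct bookkeeping of the formulas already established, and the only subtle ingredient — the equality \textbf{at} $c_\ga$ in Proposition \ref{aubryall} (as opposed to the strict inequalities for $a>c_\ga$ noted in Lemma \ref{six}) — is what forces the min to be exactly zero rather than positive.
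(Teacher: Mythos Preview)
Your proof is correct and follows exactly the approach indicated in the paper, which simply states that the corollary follows from Proposition~\ref{aubryall} together with Proposition~\ref{state} and Remark~\ref{five}. You have spelled out precisely the bookkeeping the paper leaves implicit.
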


\medskip
In the final result  of the section we provide  a characterization
for the maximal periodic subsolution taking a given value at $s_0
\in (0,1)$. This corresponds, in the case of closed arcs,  to
Corollary \ref{fourbis}.

\begin{Corollary}\label{fourtris} Let $s_0 \in (0,1)$, $\al \in \R$,
we set
\[\be= \min \left \{- \int_0^{s_0} \si^-_a(t) \,dt,\;  \int_{s_0}^1 \si^+_a(t)
\,dt \right \}.\]
\begin{itemize}
    \item [{i)}] The maximal periodic subsolution to  \eqref{HJg} taking the value
$\al$ at $s_0$, denoted by $u$,  is uniquely determined by the
condition of being solution of the equation in $(0,s_0)$ and
$(s_0,1)$ taking the values $\al$ at $s_0$ and $\al+\be$ at $0$ and
$1$.
\item [{ii)}] If $\be = - \int_0^{s_0} \si^-_a(t) \,dt$, then
\begin{equation}\label{fourtris1}
    u(s) = \al - \int_s^{s_0} \si_a^-(t)\,dt  \quad\hbox{for $s \in
[0,s_0]$.}
\end{equation}
If instead $\be = \int_{s_0}^1 \si^+a(t) \,dt$, then
\begin{equation}\label{fourtris2}
   u(s) = \al + \int_{s_0}^s \si_a^+(t)\,dt  \quad\hbox{for $s \in
[s_0,1]$}
\end{equation}
\end{itemize}
\end{Corollary}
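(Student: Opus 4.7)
The plan is to establish Corollary \ref{fourtris} by combining three ingredients: an \emph{a priori} upper bound on the boundary values (coming from Corollary \ref{fourbis}), an explicit construction via Proposition \ref{statextra} on each of the subintervals $[0,s_0]$ and $[s_0,1]$, and the identification of the extremal cases in part (ii) with the boundary of the admissibility window.

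First, I would derive the upper bound. Any subsolution $v$ of \eqref{HJg} in $(0,1)$ with $v(s_0) = \al$ satisfies, by Corollary \ref{fourbis} applied on each piece, both $v(0) \leq \al - \int_0^{s_0} \si_a^-(t)\,dt$ and $v(1) \leq \al + \int_{s_0}^1 \si_a^+(t)\,dt$. Periodicity forces $v(0) = v(1) \leq \al + \be$ with $\be$ as defined in the statement. Hence no periodic subsolution taking the value $\al$ at $s_0$ can exceed $\al + \be$ at the endpoints.

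Next, I would construct the candidate $u$. On $[0,s_0]$ with data $u(0) = \al + \be$, $u(s_0) = \al$ (and analogously on $[s_0,1]$), the key point is to check the admissibility condition of Proposition \ref{statextra}, which here reads $\int_0^{s_0} \si_a^-(t)\,dt \leq -\be \leq \int_0^{s_0} \si_a^+(t)\,dt$. The lower inequality is immediate from the definition of $\be$ as a minimum; the upper one splits into two subcases according to which term realizes that minimum, one of them being trivial and the other requiring $\int_0^1 \si_a^+(t)\,dt \geq 0$, which is Lemma \ref{six}. The symmetric check on $[s_0,1]$ uses instead $\int_0^1 \si_a^-(t)\,dt \leq 0$. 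I then glue the two solutions: each piece is Lipschitz (by coercivity {\bf (H$\ga$2)}) and solves \eqref{HJg} in its interior, so the resulting function is Lipschitz on $[0,1]$, an a.e.\ subsolution on $(0,1)$, and hence a viscosity subsolution there thanks to {\bf (H$\ga$3)}. It attains the boundary value $\al + \be$, so the first-step upper bound is sharp; this proves maximality and, combined with the uniqueness on each piece from Proposition \ref{statextra}, yields (i).

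Finally, for (ii), the two explicit formulae arise as the two endpoint cases of the admissibility window. When $\be = -\int_0^{s_0} \si_a^-(t)\,dt$, the relation $u(s_0) - u(0) = -\be = \int_0^{s_0} \si_a^-(t)\,dt$ saturates the lower bound, and the Hopf--Lax type formula \eqref{compa00} of Proposition \ref{statextra}, transplanted to $[0,s_0]$, collapses to its second branch, yielding $u(s) = \al - \int_s^{s_0} \si_a^-(t)\,dt$ on $[0,s_0]$. Symmetrically, when $\be = \int_{s_0}^1 \si_a^+(t)\,dt$, on $[s_0,1]$ the upper bound is saturated and the formula collapses to its first branch, giving $u(s) = \al + \int_{s_0}^s \si_a^+(t)\,dt$. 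The step I expect to require the most care is the sign/orientation bookkeeping in the admissibility check, and in particular making the reduction to Lemma \ref{six} (rather than to any stronger sub-interval estimate, which does not hold in general).
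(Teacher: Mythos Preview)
Your proposal is correct and follows essentially the same route as the paper: both reduce the problem to checking that the boundary data $\al+\be$, $\al$ (resp.\ $\al$, $\al+\be$) are admissible in the sense of \eqref{compa} on $[0,s_0]$ (resp.\ $[s_0,1]$), and both identify Lemma~\ref{six} as the key ingredient for the nontrivial inequality. The paper's proof is in fact terser than yours --- it declares admissibility to be ``the unique point to check'' and treats the upper bound and maximality as understood --- so your explicit derivation of the endpoint bound from Corollary~\ref{fourbis}, the gluing via quasiconvexity, and the saturation argument for part (ii) are welcome elaborations rather than departures.
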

The proof  is  in the Appendix \ref{proofs}.

\bigskip

\subsection{From local to global}

The subsequent step in our analysis will be to transfer the
Hamilton--Jacobi equation from $\Gamma$ to the underlying graph
$\XX$, where it will take the form of a discrete functional
equation. In doing this, the relevant information we derive from the
above study is  the value at $s=1$ of the maximal solution to $H=a$
vanishing at $s=0$. It is given, in accordance with Proposition
\ref{state}, by
\[\int_0^1 \si_a^+(t)\,dt.\]
Therfore, if $\ga=\Psi(e)$ and $a \geq  a_\ga$, we define
\begin{equation}\label{sigra1}
   \si_a(e) := \int_0^1 \si_a^+(t)\,dt.
\end{equation}
(recall that $a \geq a_0 \geq c_\ga$).
\smallskip

Accordingly, we have
\begin{equation}\label{sigra2}
   \si_a(- e):= -\int_0^1 \si_a^-(t)\,dt.\\
\end{equation}

If $e$ is a loop, or equivalently $\ga=\Psi(e)$ a closed curve, we
summarize the information gathered in Propositions \ref{six} and
 \ref{aubryall} as follows:

\begin{Proposition}\label{loop} If $e$ is a loop then $\si_a(e) > 0$ for
$a > c_\ga$ and $\min  \big \{\si_{c_\ga}(e), \;\si_{c_\ga}(-e)\big
\} =0 $.
\end{Proposition}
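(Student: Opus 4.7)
The proof is essentially an unpacking of the definitions of $\si_a(e)$ and $\si_a(-e)$ in light of the results established earlier in Subsection \ref{tre}. Here is the plan.

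First, I would recall the definitions \eqref{sigra1} and \eqref{sigra2}: since $e$ is a loop, the corresponding arc $\ga = \Psi(e)$ is closed, and
\[
\si_a(e) = \int_0^1 \si_a^+(t)\,dt, \qquad \si_a(-e) = -\int_0^1 \si_a^-(t)\,dt.
\]
With these identifications, the statement of the proposition is a direct rewriting of inequalities already proved.

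For the first assertion, given $a > c_\ga$, Lemma \ref{six} yields the \emph{strict} inequality $\int_0^1 \si_a^+(t)\,dt > 0$, which is precisely $\si_a(e) > 0$. (By the same token $\si_a(-e) > 0$ as well, but the proposition only asserts $\si_a(e) > 0$.)

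For the second assertion, at the critical level $a = c_\ga$, Proposition \ref{aubryall} states exactly
\[
\min\!\left\{ -\int_0^1 \si_{c_\ga}^-(t)\,dt,\ \int_0^1 \si_{c_\ga}^+(t)\,dt \right\} = 0,
\]
which, under the identifications above, is $\min\{\si_{c_\ga}(-e),\, \si_{c_\ga}(e)\} = 0$. There is no real obstacle here; the substantive analytic work has already been carried out in Lemma \ref{six} and Proposition \ref{aubryall}, and the role of this proposition is simply to phrase those one-dimensional results in the graph-theoretic language of $\XX$ that will be used in the subsequent global analysis.
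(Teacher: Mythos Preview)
Your proposal is correct and follows exactly the paper's approach: the paper explicitly introduces this proposition as a summary of the information gathered in Lemma \ref{six} and Proposition \ref{aubryall}, translated via the definitions \eqref{sigra1} and \eqref{sigra2}, and provides no further proof.
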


Moreover, we  directly  deduce from definition and \eqref{siatris}
that

\begin{Lemma}\label{prestable} The function
\[ a \longmapsto \si_a(e)\]
is continuous and strictly increasing  in $[a_\ga, + \infty)$.\\
\end{Lemma}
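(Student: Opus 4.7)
The plan is to establish the two assertions separately: strict monotonicity by a pointwise argument using \eqref{siatris}, and continuity by pointwise convergence plus dominated convergence, with the dominating bound obtained from pointwise monotonicity.

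For strict monotonicity, let $a_\gamma \leq a_1 < a_2$; then $a_2 > a_\gamma$, so \eqref{siatris} applies at the level $a_2$. For every $s \in [0,1]$ the identity $H(s, \sigma_{a_1}^+(s)) = a_1 < a_2$ places $\sigma_{a_1}^+(s)$ in the open interval $(\sigma_{a_2}^-(s), \sigma_{a_2}^+(s))$, giving $\sigma_{a_1}^+(s) < \sigma_{a_2}^+(s)$. Since both functions are continuous in $s$ by Proposition \ref{siac1}, integrating on $[0,1]$ preserves the strict inequality and yields $\sigma_{a_1}(e) < \sigma_{a_2}(e)$.

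For continuity, fix $a_* \in [a_\gamma, +\infty)$ and a sequence $a_n \to a_*$. I first prove the pointwise convergence $\sigma_{a_n}^+(s) \to \sigma_{a_*}^+(s)$ for every $s$, by splitting the sequence into two parts. Indices with $a_n \geq a_*$ give $\sigma_{a_n}^+(s) \geq \sigma_{a_*}^+(s)$ by the same argument used for monotonicity, while any subsequential limit $L$ satisfies $H(s, L) = a_*$ by continuity of $H$, hence $L \leq \sigma_{a_*}^+(s)$, forcing $L = \sigma_{a_*}^+(s)$. Indices with $a_n < a_*$, which necessarily means $a_* > a_\gamma$, give symmetrically $\sigma_{a_n}^+(s) \leq \sigma_{a_*}^+(s)$; were a subsequential limit $L$ strictly smaller than $\sigma_{a_*}^+(s)$, any $p' \in (L, \sigma_{a_*}^+(s))$ would satisfy $H(s, p') < a_*$ by \eqref{siatris}, hence $H(s, p') < a_n$ for large $n$, forcing $p' < \sigma_{a_n}^+(s) \to L$, a contradiction.

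To conclude, pick a compact neighborhood $K = [a_\gamma, b]$ of $a_*$. Pointwise monotonicity yields $\sigma_{a_\gamma}^+(s) \leq \sigma_a^+(s) \leq \sigma_b^+(s)$ for every $a \in K$ and $s \in [0,1]$; since the two extreme functions are continuous on $[0,1]$ by Proposition \ref{siac1}, they are bounded, giving a uniform constant bound $|\sigma_a^+(s)| \leq C$. Dominated convergence then delivers $\sigma_{a_n}(e) \to \sigma_{a_*}(e)$. The only delicate point is continuity at the endpoint $a_* = a_\gamma$ from the right, where the level set $\{p : H(s, p) = a_\gamma\}$ may be degenerate: the one-sided comparison $\sigma_{a_n}^+(s) \geq \sigma_{a_\gamma}^+(s)$ for $a_n > a_\gamma$ already prevents convergence to any root below $\sigma_{a_\gamma}^+(s)$, so no appeal to the auxiliary hypothesis (H$\gamma$4) is required in this lemma.
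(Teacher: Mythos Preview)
Your proof is correct and follows precisely the route the paper indicates: the paper simply states that the lemma is ``directly deduce[d] from definition and \eqref{siatris}'' without giving details, and your argument spells out exactly this deduction. One tiny point you leave implicit in the case $a_n < a_*$: to apply \eqref{siatris} and conclude $H(s,p') < a_*$ for $p' \in (L,\si_{a_*}^+(s))$, you need $L \geq \si_{a_*}^-(s)$, which holds because $H(s,L)=a_*$ forces $L \in \{\si_{a_*}^-(s),\si_{a_*}^+(s)\}$; this is routine and does not affect the validity of your argument.
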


\medskip


\bigskip
\bigskip

\section{Global Part: the Discrete Functional Equation on the Abstract Graph}\label{statgraph}

In this section we push our analysis beyond the local existence of
solutions to \eqref{HJg} on each arc $\ga$, and study   the {
global} existence
of solutions to \eqref{HJ} on the whole network $\G$.\\

Let us start by noticing that if we consider $\VV$, the set of
vertices of $\Gamma$, it is easy to check that any solution $w$ to
\eqref{HJ} has a well defined {\it trace} $u= w_{|\VV}$ on $\VV$,
simply because of the continuity assumption. The following
uniqueness result is straightforward. We provide a proof in Appendix
A for reader'convenience.

\smallskip

\begin{Proposition}\label{unico} Let $u$ be a function defined on $\VV$, then
there exists at most one solution to  \eqref{HJ}  on $\G$
agreeing with $u$ on $\VV$.
\end{Proposition}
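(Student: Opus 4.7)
The plan is to reduce the global uniqueness question on $\G$ to the one--dimensional uniqueness already established on each individual arc via Proposition \ref{statextra}. The crucial observation is that, once one knows the values of a solution on every vertex, one knows the Dirichlet data of the restriction to each arc, and these data are automatically admissible precisely because the restriction itself is a subsolution.

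Let $w_1, w_2$ be two solutions to \eqref{HJ} on $\G$ both agreeing with $u$ on $\VV$. Fix an arc $\ga \in \EN$, write $e = \Psi^{-1}(\ga)$, and set $\al := u(\oo(e))$, $\be := u(\tt(e))$. By definition of solution, each map $s \longmapsto w_i(\ga(s))$ is continuous on $[0,1]$, solves \eqref{HJg} in $(0,1)$, and takes the values $\al$ and $\be$ at $s=0$ and $s=1$ respectively. In particular, $w_i \circ \ga$ is a subsolution with endpoint values $\al, \be$, so by the lemma preceding Proposition \ref{statextra} the admissibility condition \eqref{compa} is fulfilled. Proposition \ref{statextra} then gives a unique solution to \eqref{HJg} with those endpoint values, so $w_1 \circ \ga \equiv w_2 \circ \ga$ on $[0,1]$.

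This argument works uniformly for non--closed and closed arcs: when $\ga \in \EN^*$ the vertex $\oo(e) = \tt(e)$ is a single point, so $\al = \be$ and the admissibility condition reduces to $\int_0^1 \si_a^-\,dt \leq 0 \leq \int_0^1 \si_a^+\,dt$, which is guaranteed by Lemma \ref{six} since $a \geq a_0 \geq c_\ga$. Hence the same uniqueness applies on loops.

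Since every point of $\G$ lies in the image of some $\ga \in \EN$, the equalities $w_1 \circ \ga = w_2 \circ \ga$ for all arcs assemble to $w_1 \equiv w_2$ on $\G$. Notice that condition \emph{iii)} in Definition \ref{defsolsubsol} (the state constraint requirement at each vertex) plays no role in this proof: it is the Dirichlet uniqueness on each arc that does all the work, and the vertex values are already pinned down by the hypothesis that $w_i = u$ on $\VV$. The only ``obstacle'' is therefore entirely absorbed into Proposition \ref{statextra}, whose proof is deferred to the appendix; once that is in hand, the present uniqueness statement follows with no further ingredient.
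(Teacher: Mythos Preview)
Your proof is correct and follows essentially the same route as the paper's: both reduce the global uniqueness to the one--dimensional Dirichlet uniqueness of Proposition~\ref{statextra} on each arc, using the fact that the restriction of a solution furnishes admissible boundary data. Your separate treatment of closed arcs via Lemma~\ref{six} is harmless but unnecessary, since the admissibility argument you already gave (the restriction is itself a subsolution, hence \eqref{compa} holds) works uniformly for all arcs; the paper's proof accordingly does not single out the loop case.
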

\smallskip

A converse property is by far more interesting, namely to find
conditions on a function defined on $\VV$ in order to (uniquely)
extend it on the whole network as solution to \eqref{HJ}.

This issue -- which is profoundly related to the {global} structure of the network -- will be carefully addressed in this section.\\

More precisely,  we study the problem of the admissibility, with
respect to the equation \eqref{HJ}, of a trace  $g: \VV
\longrightarrow \R$ defined on the { global} network and
characterize
all traces $g$ that can be continuously extended to solutions to
\eqref{HJ} on the whole $\Gamma$  as
solutions to  an appropriate {\it discrete functional equation} on
the  underlying abstract graph $\XX=(\VV, \EE)$.\\

\subsection{The discrete functional equation}

 Given $a \geq a_0$, the cochain $\si_a \in C^1(\XX,
\R)$ is defined as in \eqref{sigra1} where $e = \Psi^{-1}(\ga)$ and
$\Psi$ has been defined in \eqref{defPsi}.

\medskip

If we recall the admissibility condition introduced in \eqref{compa}
plus \eqref{sigra1}, \eqref{sigra2}, it is clear that the trace on
$\VV$ of a function $g:\Gamma \to \R$ admissible for the equations
on any arc satisfies
\begin{equation}\label{compabis}
    -\si_a(-e) \leq \d g(e) =g(\tt(e)) -  g(\oo(e)) \leq  \si_a(e)
\qquad\hbox{for any $e \in \EE$,}
\end{equation}
which in particular implies
$$
g(x)\leq \min_{e \in {\EE}_x} \big( g(\tt(e)) + \si_a(-e)\big)
\qquad\hbox{for
$x \in \VV$},\\
$$
where ${\EE}_x$ denotes the star centered at $x$, as defined in \eqref{defstar}.\\
\bigskip

Inspired by this, we introduce the following
{\it Discrete Functional Equation}:
\begin{equation}\label{HJa} \tag{$\mathcal{D}\mathit{FEa}$}
    u(x) = \min_{e \in {\EE}_x } \big(u(\tt(e)) + \si_a(-e)\big) \qquad\hbox{for $x
\in \VV$.}\\
\end{equation}

Observe that the formulation of the discrete problem takes somehow
into account the backward
character of viscosity solutions. \\

A function $v$ is solution  to \eqref{HJa} in some  subset $\VV'$ of
$\VV$ if \eqref{HJa}  holds true with $v$ in place of $u$ and $x
\in \VV'$.\\

A function $u:\VV \longrightarrow \R$ is a {\it subsolution} to
\eqref{HJa} if
 \begin{equation}\label{subsol}
u(x) \leq \min_{e \in {\EE}_x} \big(u(\tt(e)) + \si_a(-e)\big)
\qquad\hbox{for $x \in \VV$}
\end{equation}
  or,  equivalently, if for each $e\in \EE$ we have
\begin{equation}\label{subsolbis}
du(e) \leq \si_a(e)
\end{equation}
which is equivalent to ask that $u(\tt(e)) \leq  u(\oo(e)) + \si_a(e)$ for each $e\in\EE$.\\

\medskip
A subsolution is qualified as {\em strict}, if a strict inequality
prevails in \eqref{subsol}. \\

It is apparent that the property of being a solution or a
subsolution is not affected by addition of additive constants.

\smallskip

The crucial result linking the functional equation \eqref{HJa} to
 \eqref{HJ} is:

\smallskip
\begin{Proposition}\label{relationHJDEF}
Given $ a \geq a_0$,
\begin{itemize}
    \item [{i)}] any  solution to \eqref{HJa} in $\VV$ can be
(uniquely)  extended to a solution of \eqref{HJ} in $\Gamma$,
conversely the trace on $\VV$ of any solution of \eqref{HJ} in
$\Gamma$ is  solution to \eqref{HJa}:
    \item [{ii)}] any  subsolution to \eqref{HJa} in $\VV$ can be
  extended to a subsolution of \eqref{HJ} in $\Gamma$,
conversely the trace on $\VV$ of any sub solution of \eqref{HJ} in
$\Gamma$ is subsolution to \eqref{HJa}.
\end{itemize}
\end{Proposition}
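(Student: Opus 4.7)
My plan is to prove the subsolution equivalence (ii) first and then deduce the solution equivalence (i) by pinpointing the extra information that (DFEa) encodes beyond the subsolution inequalities. The bridge in both directions is the local Dirichlet theory of Section \ref{seclocal}: Proposition \ref{statextra} characterises admissibility of boundary data on an arc and supplies a canonical solution via formula \eqref{compa00}, while Proposition \ref{state} identifies saturation of the upper admissibility bound with the state-constraint condition at $s=1$.

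For the forward direction of (ii), applying \eqref{subsolbis} to both $e$ and $-e$ gives $-\sigma_a(-e)\leq u(\tt(e))-u(\oo(e)) \leq \sigma_a(e)$, which is exactly the admissibility condition \eqref{compa} on each arc $\gamma=\Psi(e)$; for a loop the endpoint values coincide and admissibility is guaranteed by Lemma \ref{six}, since $a\geq a_0\geq c_\gamma$. Proposition \ref{statextra} then produces, on each arc, a unique solution attaining the prescribed endpoint values, and gluing over $\Gamma$ yields a continuous $w$ with $w|_\VV=u$ which is a solution in the interior of every arc, hence in particular a subsolution to \eqref{HJ}. Conversely, if $w$ is any subsolution to \eqref{HJ}, then its restriction to each arc is Lipschitz and the necessary side of \eqref{compa} (the lemma preceding Proposition \ref{statextra}) translates verbatim into \eqref{subsolbis} for $u=w|_\VV$. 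This gives (ii).

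For (i), the extension $w$ constructed in the previous paragraph is already a solution on the interior of every arc, so the only thing to verify is condition (iii) of Definition \ref{defsolsubsol} at each vertex $x$. Since $u$ solves \eqref{HJa}, some $\bar e\in\EE_x$ attains the minimum; setting $f=-\bar e$, which terminates at $x$, one has $u(\tt(f))-u(\oo(f))=\sigma_a(f)=\int_0^1\sigma_a^+(t)\,dt$, i.e.\ the upper bound of \eqref{compa} is saturated on $\Psi(f)$. A direct inspection of \eqref{compa00} under this saturation shows that the solution coincides with the maximal one, $s\mapsto u(\oo(f))+\int_0^s\sigma_a^+(t)\,dt$, which by Proposition \ref{state} satisfies the state constraint at $s=1$: this is precisely (iii) at $x$ along $\Psi(f)$. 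Conversely, if $w$ is a solution to \eqref{HJ}, then $u=w|_\VV$ is a subsolution to \eqref{HJa} by (ii); for a given $x\in\VV$, condition (iii) supplies an arc $\gamma=\Psi(e)$ with $\tt(e)=x$ on which $w\circ\gamma$ is the maximal solution, so $w(x)-w(\oo(e))=\sigma_a(e)$, and the edge $-e\in\EE_x$ saturates the subsolution inequality at $x$, upgrading it to equality. Uniqueness of the extension in (i) is covered by Proposition \ref{unico}.

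The main obstacle is the bookkeeping around edge reversal: the object $\sigma_a$ is \emph{not} antisymmetric (one has $\sigma_a(-e)\neq -\sigma_a(e)$ in general), and one must keep straight that the minimum in \eqref{HJa} is attained on an ``outgoing'' edge $\bar e$ at $x$, whereas the state-constraint property of Proposition \ref{state} is formulated at $s=1$ of the ``incoming'' arc $\Psi(-\bar e)$. Beyond this conversion, everything reduces to routine applications of Propositions \ref{statextra} and \ref{state}; closed arcs require no special treatment beyond invoking Lemma \ref{six} to certify admissibility of the degenerate $\alpha=\beta$ data when $a\geq a_0$.
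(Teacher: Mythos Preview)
Your proof is correct and follows essentially the same route as the paper's: both reduce everything to the equivalence between \eqref{subsolbis} and the admissibility condition \eqref{compa}, extend arc-by-arc via Proposition \ref{statextra}, and link the equality case of \eqref{HJa} to the state-constraint condition through Proposition \ref{state}. The only differences are cosmetic---you prove (ii) first and then leverage it for (i), whereas the paper does (i) first and remarks that (ii) is analogous---and the paper explicitly invokes Proposition \ref{inverti} to verify that the extensions built via $\gamma$ and $\widetilde\gamma$ agree on their common support, a well-posedness check you pass over but which is immediate.
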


\begin{proof} Assume that $u$ solves \eqref{HJa}. Let $x$, $y$ be
two adjacent vertices,   $e$ an edge with initial vertex $x$ and
final vertex $y$. We set $\ga = \Psi(e)$ and consequently
$\widetilde \ga = \Psi(-e)$, then $\ga(0)= \widetilde \ga(1) = x$
and $\ga(1)= \widetilde \ga(0)= y$. By the very definition of
(sub)solution to \eqref{HJa}, we have
\begin{eqnarray*}
  u(\ga(1))- u(\ga(0)) &\leq& \si_a(e) \\
 u(\ga(1))- u(\ga(0)) &= &u(\widetilde \ga(0)) - u(\widetilde \ga(1)) \geq  - \si_a(-e).
\end{eqnarray*}
Taking into account \eqref{compa}, we derive that the values
$u(\ga(0))$, $u(\ga(1))$ are admissible for \eqref{HJg} in $(0,1)$.
We therefore deduce from Proposition \ref{statextra} that there is
an unique solution, say $w: [0,1] \to \R$, to \eqref{HJg} taking
precisely these values at the boundary. We define
\[ v(z) = w(\ga^{-1}(z)) \qquad\hbox{for $z \in \ga((0,1))$.}\]
Since $\ga((0,1))= \widetilde \ga((0,1))$,  one needs to check  that
this definition is well-posed, performing the same construction for
$\widetilde \gamma$, but this is a direct consequence of Proposition
\ref{inverti}.

So far, we have successfully checked conditions {i)}, {ii)} in the
definition of solution to \eqref{HJ} (see Definition
\ref{defsolsubsol}). It is left to show { iii)}. Since $u$ is a
solution to \eqref{HJa}, for any  $x \in \VV$ there is an edge $e_0$
with $x$ as terminal vertex such that
\[ u(x) - u(\oo(e_0))= \si_a(e_0).\]
Taking into account \eqref{sigra1} and Proposition \ref{state},  we
deduce that, for $\ga=\Psi(e_0)$,  $v \circ \ga$ actually satisfies
the state constraint boundary condition in {iii)} with respect to
\eqref{HJg}.

 Conversely, let $u$ be a real function on $\VV$ which
is the trace on $\G$ of a solution to \eqref{HJ}.  It follows from
 the compatibility condition \eqref{compa}, and
the notations (\ref{sigra1})-(\ref{sigra2}), that $u$ is a
subsolution to \eqref{HJa}, {\it i.e.},
\begin{equation}\label{disequazione}
u(x)\leq \min_{e \in {\EE}_x} \big( u(\tt(e)) + \si_a(-e)\big)
\qquad\hbox{for $x \in \VV$}.
\end{equation}
In order to show that it is a solution to \eqref{HJa}, we need to
prove  that equality holds in \eqref{disequazione} for every
$x\in\VV$. In fact, since $u$ is the trace of a solution to
\eqref{HJ}, then it follows from condition {iii)} in Definition
\ref{defsolsubsol}, that for every vertex $x$ there is at least one
arc $\ga$ having $x$ as terminal point, such that   $ u(\ga(s))$
    satisfies the state constraint boundary   condition for \eqref{HJg}  at $s=1$. In particular, in the light of  Proposition \ref{state}, see \eqref{sigra1},
    this implies that there exists $e$
     with $\tt(e)=x$, or in other terms $-e\in\EE_x$, such that
     \[  u(x) - u(\oo(e)) = \si_a(e)\]
    or equivalently
    $$
    u(x) = u(\tt(-e)) + \si_a(e).
    $$
    Hence, equality holds in \eqref{disequazione},  and this completes the proof of item
    i). Item ii) can be proven arguing along the same lines.
\end{proof}

\medskip

The same argument as in the above proof allows also showing the
following:
\begin{Proposition} \label{prop5.8}
 Given $ a \geq a_0$ and $\VV' \subset \VV$, a function $u: \VV
\longrightarrow \R$ which is subsolution to \eqref{HJa} in $\VV$ and
solution in $\VV \setminus \VV'$  can be (uniquely)  extended to  a
function $v:\G \to \R$ subsolution of \eqref{HJ} in $\G$ and
solution in $\Gamma \setminus \VV'$. Conversely, the trace on $\VV$
of  a function $v:\G \to \R$, which is subsolution to \eqref{HJ} in $\G$ and
solution  in $\Gamma \setminus \VV'$, is a subsolution to \eqref{HJa}
in $\VV$ and a solution in $\VV \setminus \VV'$.
\end{Proposition}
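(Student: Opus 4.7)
The plan is to mimic the argument of Proposition \ref{relationHJDEF}, tracking carefully which vertices merely carry the subsolution inequality (those in $\VV'$) and which carry the full discrete equation (those in $\VV \setminus \VV'$). The statement is essentially a relative version of Proposition \ref{relationHJDEF}, so no new techniques are required; the bookkeeping is the only delicate point.

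For the forward direction, given $u:\VV \to \R$ which is a subsolution to \eqref{HJa} in $\VV$ and a solution in $\VV \setminus \VV'$, the inequalities $du(e) \leq \si_a(e)$ on every edge translate, via \eqref{sigra1} and \eqref{sigra2}, into the admissibility condition \eqref{compa} on the boundary values of $u$ along any $\ga = \Psi(e)$. Proposition \ref{statextra} then yields a unique solution $w_\ga$ of \eqref{HJg} on $[0,1]$ taking these prescribed boundary values, and Proposition \ref{inverti} guarantees that the construction is consistent under the orientation reversal $\ga \leftrightarrow \widetilde\ga$. Gluing the $w_\ga$'s produces a continuous $v:\G \to \R$ which by construction is a subsolution to \eqref{HJ} on $\G$ and a solution in the interior of each arc; uniqueness of $v$ is built into Proposition \ref{statextra}.

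To complete the verification that $v$ is a solution on $\G \setminus \VV'$ in the sense of Definition \ref{defsolsubsol2}, I must check condition iii) of Definition \ref{defsolsubsol} at every $x \in \VV \setminus \VV'$. Since $u$ solves \eqref{HJa} at such an $x$, there exists $e_0 \in \EE_x$ realizing the minimum, so $u(x) = u(\tt(e_0)) + \si_a(-e_0)$. Writing $f := -e_0$ (so that $\tt(f) = x$) and $\ga := \Psi(f)$, this reads $u(x) - u(\oo(f)) = \si_a(f)$, which by Proposition \ref{state} combined with \eqref{sigra1} is exactly the statement that $v\circ\ga$ satisfies the state constraint boundary condition at $s=1$.

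For the converse, given $v$ a subsolution on $\G$ and a solution on $\G \setminus \VV'$, the trace $u := v|_\VV$ satisfies \eqref{compabis} on every edge by admissibility, hence is a subsolution to \eqref{HJa}. At any $x \in \VV \setminus \VV'$, condition iii) of Definition \ref{defsolsubsol} supplies an arc $\ga$ ending at $x$ on which the state constraint holds at $s=1$; by Proposition \ref{state} this produces an edge $f$ with $\tt(f)=x$ for which $du(f) = \si_a(f)$, and this upgrades the subsolution inequality of \eqref{HJa} at $x$ to equality. The only (mild) obstacle throughout is matching the discrete form of the state constraint at a vertex $x$ with the edge realizing the minimum in \eqref{HJa} at $x$; Proposition \ref{state} is precisely what allows this translation.
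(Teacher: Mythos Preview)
Your proposal is correct and follows exactly the approach the paper intends: the paper's own ``proof'' consists of the single sentence that the same argument as in Proposition~\ref{relationHJDEF} applies, and you have carefully carried out that argument while tracking which vertices lie in $\VV'$ versus $\VV\setminus\VV'$. Your bookkeeping is sound, including the use of Proposition~\ref{state} to translate between the state constraint condition at a vertex and equality in the discrete equation, and the observation that interior points of any arc lie in $\G\setminus\VV\subset\G\setminus\VV'$ so that $v\circ\ga$ is a genuine solution of \eqref{HJg} on $(0,1)$.
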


\medskip

\subsection{Existence of solutions to ($\mathcal{D}FEa$) and critical value}

We want to introduce a notion of {\it critical value} for ($\mathcal{D}FEa$) and prove the existence of solutions.

Let us start by proving the following stability properties of solutions and subsolutions.

\smallskip

\begin{Proposition} \label{stable}\hfill
\begin{itemize}
    \item[{i)}] Let $a_n$ be a sequence in $\R$ converging to some $a$.  Let $u_n$ be subsolution
     to ($\mathcal{D}FEa_n$) for every $n$, with $u_n(x_0)$ bounded for some $x_0 \in \VV$; then
    $u_n$ converge, up to subsequences, to a subsolution to
    \eqref{HJa}.
    \item[{ii)}] Let $v_n$ be a sequence of solution to \eqref{HJa}, for
    some $a\in\R$, with $v_n(x_0)$ bounded for some $x_0 \in \VV$; then $v_n$ converges, up to a subsequence, to a
    solution to \eqref{HJa}.
\end{itemize}
\end{Proposition}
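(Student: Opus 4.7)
The plan is to combine two ingredients already available in the paper: the continuity and monotonicity of $a \mapsto \si_a(e)$ from Lemma \ref{prestable} and the Arzelà-type compactness for cochains on a finite graph encoded in Proposition \ref{equi}. Since $\XX$ is finite, pointwise convergence on $\VV$ is the same as uniform convergence, so once I have equiboundedness I can pass to the limit freely in equalities and inequalities.

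For part i), I would start from the equivalent form \eqref{subsolbis} of the subsolution property, namely $du_n(e) \leq \si_{a_n}(e)$ for every $e \in \EE$. Applying this simultaneously to $e$ and to $-e$, and using the antisymmetry $du_n(-e)=-du_n(e)$ built into the definition of $C^1(\XX,\R)$, I get the two-sided bound $-\si_{a_n}(-e) \leq du_n(e) \leq \si_{a_n}(e)$. Since $\EE$ is finite and $a_n \to a$, Lemma \ref{prestable} makes these bounds uniform in $n$, so $du_n$ is equibounded in the sense of Section \ref{prelim}. Combined with the boundedness of $u_n(x_0)$, Proposition \ref{equi}(ii) yields a subsequence (still denoted $u_n$) converging to some $u \in C^0(\XX,\R)$; by Proposition \ref{equi}(i) one has $du_n \to du$. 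Passing to the limit in $du_n(e) \leq \si_{a_n}(e)$, invoking Lemma \ref{prestable} once more, gives $du(e) \leq \si_a(e)$, so $u$ is a subsolution to \eqref{HJa}.

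For part ii), I would observe that every solution is in particular a subsolution, so the argument of i) applied to the constant sequence $a_n \equiv a$ already produces a subsequence $v_{n_k}$ converging to some $v \in C^0(\XX,\R)$ which is a subsolution to \eqref{HJa}. To promote $v$ to a solution I fix $x \in \VV$: for each $k$ the minimum defining the equation at $x$ is attained at some $e_k \in \EE_x$, so $v_{n_k}(x) = v_{n_k}(\tt(e_k)) + \si_a(-e_k)$. Because $\EE_x$ is finite, a further (finite) pigeonhole extraction yields $e_k \equiv e_\star$ for all $k$ large enough; letting $k \to \infty$ gives $v(x) = v(\tt(e_\star)) + \si_a(-e_\star)$, which combined with the subsolution inequality at $x$ forces equality in \eqref{subsol}. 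Thus $v$ solves \eqref{HJa} at every vertex.

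No serious obstacle arises: the only slightly delicate point is the pigeonhole step in part ii), where one has to select an edge at which the minimum is attained in the limit. The finiteness of each star $\EE_x$ turns this into a routine combinatorial argument rather than a genuine issue, and no uniform continuity properties of the underlying Hamiltonians $H_\ga$ need to be invoked, since everything has been reduced to the finite combinatorial object $\XX$.
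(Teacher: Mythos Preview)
Your proof is correct and follows essentially the same route as the paper: equiboundedness of $du_n$ via Lemma \ref{prestable}, compactness from Proposition \ref{equi}, and a pigeonhole extraction on the finite star $\EE_x$ to pass the solution property to the limit. The only cosmetic difference is that the paper bounds $\langle du_n,e\rangle$ by $\si_b(e)$ with $b=\sup_n a_n$ (using monotonicity) whereas you use the two-sided bound coming from continuity; both are immediate from Lemma \ref{prestable}.
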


\medskip

\begin{proof}  Owing to the definition of subsolution and Lemma \ref{prestable}, we see that
\[  \langle \d u_n, e\rangle \leq  \si_b(e) \qquad\hbox{for every
$e \in \EE$,}\] where $b =\sup a_n$. This implies that the $\d
u_n$'s  are equibounded. We therefore get, exploiting the
boundedness assumption on $x_0$ and  Proposition \ref{equi} { ii)},
that $u_n$ is convergent, up to subsequences, to some $u$. In force
of Lemma \ref{prestable} we have
\[ u(\tt(e))- u(\oo(e)) - \si_a(e) = \lim_n  \big ( u_n(\tt(e))- u_n(\oo(e))  - \si_{a_n}(e) \big ) \leq
0\] for any $e$, showing that $u$ is subsolution to \eqref{HJa}.\\

Let now $v_n$ be a sequence of solutions to \eqref{HJa}; because of
the previous point,  $v_n$ converge, up to subsequences, to a
subsolution $v$ of the same equation. It is left to show that $v$ is
indeed a solution. Given $x \in \VV$, we find $e_n \in {\EE}_x$ with
\[ v_n (\tt(e_n)) -  v_n(x) - \si_a(- e_n) = 0.\]
Since the edges are finite, we deduce that there exists $e_0 \in \EE_x$  such that
\[e_n = e_0 \qquad\hbox{for infinitely many $n$.}\]
Up to extracting to a subsequence, passing to the limit as $n$ goes to infinity, we obtain
\[ v (\tt(e_0)) -  v(x) - \si_a(- e_0) = 0, \]
which completes the proof.
\end{proof}

\bigskip
\bigskip

We define the {\it critical value}  for \eqref{HJa}  (also called {\it Ma\~n\'e critical value}) as
\begin{equation}\label{defcritvalue}
c =c(\HN) :=  \min  \{a\geq a_0 \mid \hbox{\eqref{HJa} admits subsolutions}\}.
\end{equation}
First of all, notice that it is well-defined. In fact, because of the coercivity of the $H_\ga$'s,  $\si_a$ is strictly
positive for every $e$, when $a$ is large enough, so that any
constant function is a subsolution to \eqref{HJa}. This shows that
$c$ is finite. Note the minimum in the definition of $c$ is
justified by Proposition \ref{stable}, showing the existence of
critical subsolutions (namely, subsolutions to \eqref{HJa} with $a =
c$).\\

\medskip

The relevance of the critical value is apparent from the following result.

\begin{Proposition} \label{propcritvalue}
If there exists a  solution to  \eqref{HJa}, then $a=c$.
\end{Proposition}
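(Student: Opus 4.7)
The inequality $a\geq c$ is immediate from the definition of the critical value, since any solution of $(\mathcal{D}\mathit{FEa})$ is in particular a subsolution. The substance of the proposition is therefore the reverse inequality $a\leq c$, which I plan to establish by contradiction through a maximum-principle style argument on the finite vertex set $\VV$.

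The crucial ingredient will be the strict monotonicity of $b\mapsto\si_b(e)$ provided by Lemma \ref{prestable}. Suppose, for contradiction, that $a>c$. Since $c\geq a_0\geq a_\ga$ for every $\ga\in\EN$, that lemma gives $\si_c(e)<\si_a(e)$ for every $e\in\EE$. By the very definition of $c$, whose attainment is justified just after \eqref{defcritvalue} via Proposition \ref{stable}, there exists a critical subsolution $v:\VV\to\R$, that is, a function satisfying $\d v(e)\leq \si_c(e)$ for all $e\in\EE$. Combining the two inequalities yields $\d v(e)<\si_a(e)$ for every edge $e$, which means that $v$ is a \emph{strict} subsolution of $(\mathcal{D}\mathit{FEa})$; equivalently,
\[
v(x)<v(\tt(e))+\si_a(-e) \qquad\text{for every } x\in\VV\ \text{and } e\in\EE_x.
\]

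The contradiction will then follow by comparing $u$ with $v$ on $\VV$. Since $\VV$ is finite, the function $u-v$ attains its maximum at some vertex $x_0\in\VV$. Because $u$ solves $(\mathcal{D}\mathit{FEa})$, I can pick an edge $e_0\in\EE_{x_0}$ realizing the minimum in the discrete equation at $x_0$, so that
\[
u(x_0)=u(\tt(e_0))+\si_a(-e_0).
\]
Subtracting this equality from the strict inequality above evaluated at $x_0$ and $e_0$ gives $(u-v)(x_0)>(u-v)(\tt(e_0))$, contradicting the maximality of $x_0$ and ruling out $a>c$.

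The only delicate point in this plan is making sure that Lemma \ref{prestable} genuinely applies at level $c$, which is guaranteed by $c\geq a_0$; otherwise the finiteness of $\VV$ together with the discrete nature of the equation makes the comparison argument essentially automatic, so I do not anticipate any serious technical obstacle. It is worth emphasizing that this is precisely the simplification, advertised in the introduction, which the passage to the abstract graph buys us: here a one-line maximum-principle argument replaces the continuous doubling-variable machinery.
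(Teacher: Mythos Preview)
Your proof is correct and follows essentially the same maximum-principle argument on $\VV$ as the paper: compare a strict subsolution of $(\mathcal{D}\mathit{FEa})$ with the given solution at a maximizer of their difference, and use the edge realizing the discrete minimum for the solution to contradict strictness. The only difference is cosmetic---the paper simply asserts the existence of a strict subsolution when $a>c$, while you spell out why (a critical subsolution together with the strict monotonicity of Lemma~\ref{prestable}); your explanation is a welcome clarification, and your check that $c\geq a_0\geq a_\ga$ so that Lemma~\ref{prestable} applies is exactly the right justification.
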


\begin{proof} Clearly $a\geq c$, since every solution is also a subsolution. If $a > c$, then there exists  a strict subsolution $u$  to \eqref{HJa}. Let us assume, by contradiction, that
there exists also a solution $v$. Let $x_0$ be  point at which  $u -v$ achieves its maximum;
then
\begin{equation}\label{compaeik1}
    v(x_0) - v(\tt(e))  \leq u(x_0) - u(\tt(e))
\qquad\hbox{for any $e \in {\EE}_{x_0}$}.
\end{equation}
 By the very definition  of solution applied
to $v$, there is $e_0 \in {\EE}_{x_0}$ such that
\[    v(x_0) = v(\tt(e_0)) + \si_a(-e_0).\]
We derive, taking into account \eqref{compaeik1},
\[  u(x_0) \geq u(\tt(e_0)) + \si_a(-e_0),\]
which is in contrast with the very definition of strict subsolution.
\end{proof}

\smallskip
We further deduce a uniqueness result in the supercritical case.

\smallskip

\begin{Proposition} \label{prop5.11}
Let $ a > c$, $\VV' \subset \VV$. For any given \ function $u$
defined on $\VV'$ there is at most one solution $v$ of
\eqref{HJa} in $\VV\setminus\VV'$ agreeing with $u$ on $\VV'$.\\
\end{Proposition}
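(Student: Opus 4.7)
The plan is to adapt the one-point-of-maximum argument used in Proposition~\ref{propcritvalue}, replacing the strict subsolution versus solution comparison by a convex combination trick that exploits the fact that $a$ is strictly supercritical. First, since $a>c$, pick any subsolution $w$ of ($\mathcal{D}\mathit{FEc}$); by Lemma~\ref{prestable} and finiteness of $\EE$, the quantity
\[
\eps:=\min_{e\in\EE}\bigl(\si_a(e)-\si_c(e)\bigr)
\]
is strictly positive, so $w$ satisfies $\d w(e)\leq \si_a(e)-\eps$ for every $e\in\EE$, i.e.\ $w$ is a \emph{strict} subsolution to \eqref{HJa}.

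Now let $v_1,v_2$ be two solutions to \eqref{HJa} in $\VV\setminus\VV'$, both agreeing with $u$ on $\VV'$. For $\lambda\in(0,1)$ introduce the convex combination $w_\lambda:=\lambda w+(1-\lambda)v_2$ and look at the maximum on the finite set $\VV$ of the function $w_\lambda-v_1$. The plan is to show that this maximum must be attained on $\VV'$. Suppose instead it is attained at $x_0\in\VV\setminus\VV'$. Since $v_1$ solves \eqref{HJa} at $x_0$, there exists $e_0\in\EE_{x_0}$ with $v_1(x_0)=v_1(\tt(e_0))+\si_a(-e_0)$. On the other hand, the strict subsolution property of $w$ yields $w(\tt(e_0))+\si_a(-e_0)-w(x_0)\geq \eps$, and the solution condition satisfied by $v_2$ at $x_0\in\VV\setminus\VV'$ gives $v_2(\tt(e_0))+\si_a(-e_0)-v_2(x_0)\geq 0$; combining these with the definition of $w_\lambda$ produces
\[
w_\lambda(\tt(e_0))+\si_a(-e_0)-w_\lambda(x_0)\ \geq\ \lambda\,\eps\ >\ 0.
\]
Subtracting the equality for $v_1$ then gives $(w_\lambda-v_1)(\tt(e_0))>(w_\lambda-v_1)(x_0)$, contradicting maximality at $x_0$.

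Consequently the maximum is attained on $\VV'$, where $w_\lambda-v_1=\lambda w+(1-\lambda)u-u=\lambda(w-u)$. Therefore for every $x\in\VV$,
\[
\lambda\,w(x)+(1-\lambda)v_2(x)-v_1(x)\ \leq\ \lambda\,\max_{\VV'}(w-u),
\]
and letting $\lambda\to 0^+$ we obtain $v_2(x)\leq v_1(x)$. Exchanging the roles of $v_1$ and $v_2$ yields the reverse inequality, hence $v_1\equiv v_2$.

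The only point requiring attention is that $v_2$ is \emph{not} assumed to be a global subsolution to \eqref{HJa}: the solution condition is posed only at vertices of $\VV\setminus\VV'$, so no constraint on $\d v_2(e)$ is available for edges having both endpoints in $\VV'$. This is why I never need to bound $\d v_2$ everywhere; at the problematic point $x_0\in\VV\setminus\VV'$ the single inequality $v_2(x_0)\leq v_2(\tt(e))+\si_a(-e)$, which is exactly the (weak) solution condition at $x_0$, is what propagates the slack $\lambda\eps$ from $w$ to $w_\lambda$ and closes the argument.
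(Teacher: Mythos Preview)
Your proof is correct and follows essentially the same approach as the paper: perturb one solution by convex combination with a strict subsolution to \eqref{HJa} (available because $a>c$), then use a maximum-point argument to force the maximum onto $\VV'$. The only cosmetic difference is that you deduce $v_2\leq v_1$ by letting $\lambda\to 0$ and then swap roles, whereas the paper argues by contradiction after assuming $\max_{\VV}(u_1-u_2)>0$ and choosing $\lambda$ small enough to keep the maximizer in $\VV\setminus\VV'$.
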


\begin{proof} Assume by contradiction that there are two distinct solutions $u_1$, $u_2$ both satisfying the
statement.
Being $a > c$, we know that there is a strict subsolution $w$ to
\eqref{HJa}. Therefore, given $\la \in (0,1)$ we have
\begin{equation}\label{nuit1}
  \la \, w(x) + (1- \la) \, u_1(x) < \min_{e \in \EE_x} \big( \la \, w(\tt(e)) + (1- \la) \, u_1(\tt(e)) \, +\, \si_a(-e)\big)
\end{equation}
for any $x \in \VV \setminus \VV'$. Up to interchanging the roles of
$u_1$ and $u_2$, we can assume that $\max_{\VV} (u_1 - u_2)
>0$, so that any maximizer is outside $\VV'$. For $\la$
sufficiently close to $0$, we still have that $ \big [ \la \, w +
(1- \la) \, u_1 \big ]-u_2$ achieves its maximum in $\VV \setminus
\VV'$. Let $x_0$ be one of these points of maximum; then, for every
$e\in {\EE}_{x_0}$ we have
\[\big [ \la \, w(x_0) +
(1- \la) \, u_1(x_0) \big ]-u_2 (x_0) \geq \big [ \la \, w(\tt(e)) +
(1- \la) \, u_1(\tt(e)) \big ]-u_2(\tt(e))\]
or
\[u_2(x_0) \leq u_2(\tt(e)) + \la \, w(x_0) + (1- \la) \, u_1(x_0) -\la \, w(\tt(e)) - (1- \la) \,
u_1(\tt(e)).\] Using  \eqref{nuit1} we can deduce
\[u_2(x_0) < \min_{e \in {\EE}_{x_0}}\big(  u_2(\tt(e))  -\si_a(-e)\big)\]
in contrast with $x_0 \not\in \VV'$ and $u_2$ being solution to
\eqref{HJa} in $\VV \setminus \VV'$.

\end{proof}

\bigskip

Given $a \geq a_0$, we define  for any path
$\xi=(e_1,\ldots,e_M)=(e_i)_{i=1}^M$
\begin{equation}\label{intrinsiclength}
\si_a(\xi)= \sum_{i=1}^M \si_a(e_i),
\end{equation}
and
\begin{equation}\label{Saa}
S_a(x,y):= \inf \{\si_a(\xi) \mid \xi\;\hbox{is a path linking $x$ to $y$}\}.
\end{equation}
The following triangle inequality is a direct consequence of the
 definition
\begin{equation}\label{triangle}
  S_a(x,y) \leq S_a(x,z)+ S_a(z,y) \qquad\hbox{for any $x$, $y$, $z$
  in $\VV$.}\\
\end{equation}

\bigskip

The next result starts unveiling the major role of cycles in the
forthcoming analysis.

\smallskip

\begin{Lemma}\label{lemmotto}  $S_a \not \equiv - \infty$ if and only if
\[\si_a(\xi) \geq 0 \qquad\hbox{ for any cycle $\xi$},\]
which is equivalent to say that $S_a(x,x)\geq 0$ for any $x\in \VV$.\\
\end{Lemma}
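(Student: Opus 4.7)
The plan is to prove the chain of equivalences in the statement: the condition $\si_a(\xi) \geq 0$ for every cycle is manifestly equivalent to $S_a(x,x) \geq 0$ for every $x \in \VV$, since a cycle is by definition a path from a vertex to itself, so cycles based at $x$ are exactly the competitors in the infimum defining $S_a(x,x)$; if this infimum is nonnegative then each individual cycle at $x$ has nonnegative $\si_a$-value, and vice versa. The substantive content is therefore to show that either of these equivalent conditions is in turn equivalent to $S_a \not\equiv -\infty$.

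For the direction ``cycles nonneg $\Rightarrow S_a \not\equiv -\infty$'', my strategy is to reduce an arbitrary path linking two vertices $x, y$ to a simple one via successive cycle excision. Given a path $\xi$ from $x$ to $y$ that visits some vertex twice, the subpath between two of its occurrences is a cycle, which I excise to obtain a strictly shorter path still linking $x$ to $y$. By additivity of $\si_a$ along concatenations (see \eqref{intrinsiclength}) and the nonnegativity of the removed cycle, the $\si_a$-value only decreases under this operation. Iterating until no interior vertex is repeated produces a simple path $\tilde\xi$ linking $x$ to $y$ with $\si_a(\tilde\xi) \leq \si_a(\xi)$. Since a finite graph admits only finitely many simple paths, as recalled in Section \ref{prelim}, the minimum of $\si_a$ over simple paths from $x$ to $y$ is a finite lower bound for $S_a(x,y)$, so in particular $S_a(x,y) > -\infty$.

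For the converse I argue by contrapositive: suppose there is a cycle $\xi_0$ based at some vertex $z$ with $\si_a(\xi_0) < 0$. Given any $x, y \in \VV$, I use connectedness of $\XX$ to pick paths $\eta_1$ from $x$ to $z$ and $\eta_2$ from $z$ to $y$; the concatenation of $\eta_1$, $n$ copies of $\xi_0$, and $\eta_2$ is a path from $x$ to $y$ whose $\si_a$-value equals $\si_a(\eta_1) + n\,\si_a(\xi_0) + \si_a(\eta_2) \to -\infty$ as $n \to \infty$. Hence $S_a(x,y) = -\infty$ for every pair, which contradicts $S_a \not\equiv -\infty$. The only mildly delicate point is the bookkeeping in the cycle-excision procedure, in particular verifying that after removing a cycle one still has a bona fide path with the original endpoints; beyond that, the argument is a direct consequence of the definitions, the additivity of $\si_a$, and the connectedness and finiteness of $\XX$, and I anticipate no serious obstacle.
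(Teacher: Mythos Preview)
Your proof is correct. The contrapositive direction (a negative cycle forces $S_a \equiv -\infty$) is exactly the paper's argument, though you spell out the use of connectedness more explicitly than the paper does.

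For the forward direction you do more than the lemma requires. The statement only asks for $S_a \not\equiv -\infty$, i.e.\ that $S_a$ is finite at \emph{some} pair. The paper simply observes that if every cycle has $\si_a \geq 0$, then $S_a(x,x) \geq 0$ for every $x$ (since the competitors in this infimum are precisely the cycles based at $x$), and this already gives $S_a \not\equiv -\infty$. Your cycle-excision argument instead establishes the stronger fact that $S_a(x,y) > -\infty$ for \emph{every} pair, by reducing to simple paths; this is correct and is essentially the content of the paper's subsequent Corollary~\ref{coro}, but it is not needed here.
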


\begin{proof} If  $\si_a(\xi) < 0$ for some  cycle $\xi$,  then going
through it several times, we deduce that $S_a \equiv - \infty$.
Conversely, if  $\si_a(\xi) \geq 0$ for any cycle $\xi$, then
\[S_a(x,x)\geq0 \qquad   \hbox{for any $x \in \VV$}\]
and therefore $S_a \not\equiv - \infty$.
\end{proof}

\medskip

From the very definition of subsolution we derive the following result.

\smallskip

\begin{Proposition}\label{sa}  A function $u$ is a subsolution to \eqref{HJa} if
and only if
\[u(x)- u(y) \leq S_a(y,x) \qquad\hbox{for any $x,y \in \VV$.}\]
\end{Proposition}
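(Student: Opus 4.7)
The plan is to prove the two directions separately, both via elementary manipulations of the telescoping identity along a path, combined with the additive definition \eqref{intrinsiclength} of $\si_a(\xi)$.

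For the forward implication, assume $u$ is a subsolution to \eqref{HJa}, so that $u(\tt(e)) - u(\oo(e)) \leq \si_a(e)$ holds for every $e \in \EE$ (this is \eqref{subsolbis}). I would fix $x, y \in \VV$, take an arbitrary path $\xi = (e_1, \ldots, e_M)$ linking $y$ to $x$, and write the telescoping identity
\[
u(x) - u(y) \;=\; \sum_{i=1}^{M} \bigl( u(\tt(e_i)) - u(\oo(e_i)) \bigr),
\]
which is valid since $\tt(e_i) = \oo(e_{i+1})$ by the definition of a path. Applying the subsolution inequality edge by edge gives $u(x) - u(y) \leq \sum_i \si_a(e_i) = \si_a(\xi)$, and taking the infimum over all paths from $y$ to $x$ yields $u(x) - u(y) \leq S_a(y,x)$.

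For the reverse implication, assume $u(x) - u(y) \leq S_a(y,x)$ for every pair of vertices. Given any edge $e \in \EE$, the single-edge sequence $(e)$ is itself a path from $\oo(e)$ to $\tt(e)$ with $\si_a((e)) = \si_a(e)$; hence $S_a(\oo(e), \tt(e)) \leq \si_a(e)$. Combined with the hypothesis applied to $y = \oo(e)$, $x = \tt(e)$, we get $u(\tt(e)) - u(\oo(e)) \leq S_a(\oo(e), \tt(e)) \leq \si_a(e)$, which is precisely \eqref{subsolbis}, so $u$ is a subsolution to \eqref{HJa}.

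There is no real obstacle here: the statement is a direct graph-theoretic reformulation of the discrete one-form inequality $du \leq \si_a$ in terms of the induced path-metric $S_a$, and both directions follow by inspection. The only minor point worth noting is that the argument does \emph{not} require $S_a \not\equiv -\infty$: if a negative cycle exists then $S_a \equiv -\infty$ by Lemma \ref{lemmotto}, but in that case no subsolution can exist either (since a subsolution would give $0 = u(x)-u(x) \leq S_a(x,x) = -\infty$), so the equivalence continues to hold vacuously.
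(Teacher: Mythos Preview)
Your proof is correct and follows essentially the same approach as the paper: both directions use the edge-wise inequality \eqref{subsolbis}, telescoping along a path for the forward implication and taking a single edge as a path for the converse. Your additional remark on the vacuous case $S_a\equiv -\infty$ is a nice clarification but not needed for the argument.
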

\begin{proof}
 It follows easily from the definitions of subsolution in
\eqref{subsolbis} and $\si_a$ in  \eqref{intrinsiclength} that
\[u(x)- u(y)   \leq \si_a(\xi) \qquad\hbox{for any path $\xi$ linking
$y$ to $x$.}\] Taking the minimum over all  such  paths, we get the
inequality in the statement.  The converse is trivial, observing
that
$$S_a(\oo(e), \tt(e)) \leq \si_a(e) \qquad \hbox{for every $e\in \EE$}.$$
\end{proof}

\medskip

The previous result implies

\begin{Corollary}\label{corsuper} If $a \geq c$  then $S_a \not \equiv -
\infty$.\\
\end{Corollary}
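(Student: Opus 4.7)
The plan is to deduce the statement from Proposition \ref{sa} (or equivalently directly from the definition of subsolution) combined with Lemma \ref{lemmotto}. By definition of the critical value $c$, for any $a \geq c$ the equation \eqref{HJa} admits a subsolution: indeed, by Lemma \ref{prestable} the map $a \mapsto \sigma_a(e)$ is (strictly) increasing for each $e\in\EE$, so any subsolution at level $c$ remains a subsolution at level $a \geq c$.

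Fix such a subsolution $u$ at level $a$. By \eqref{subsolbis} we have $u(\tt(e))-u(\oo(e)) \leq \sigma_a(e)$ for every edge $e$. Given any cycle $\xi=(e_1,\dots,e_M)$ with $\tt(e_i)=\oo(e_{i+1})$ for $i<M$ and $\tt(e_M)=\oo(e_1)$, summing these inequalities along $\xi$ yields
\[
0 \;=\; \sum_{i=1}^{M}\bigl(u(\tt(e_i))-u(\oo(e_i))\bigr) \;\leq\; \sum_{i=1}^{M}\sigma_a(e_i) \;=\; \sigma_a(\xi),
\]
because the left-hand side telescopes to $u(\oo(e_1))-u(\oo(e_1))=0$. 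Hence $\sigma_a(\xi)\geq 0$ for every cycle $\xi$.

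By Lemma \ref{lemmotto}, this non-negativity on cycles is exactly the condition ensuring $S_a\not\equiv -\infty$, which concludes the proof. There is no real obstacle here: the only substantive ingredient is the monotonicity of $a\mapsto \sigma_a(e)$ (Lemma \ref{prestable}), invoked to guarantee that a critical subsolution is also a subsolution at the supercritical level; the rest is a one-line telescoping computation along the cycle.
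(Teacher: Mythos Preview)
Your proof is correct and uses the same basic ingredient as the paper: the existence of a subsolution at level $a\geq c$. The paper's (implicit) argument is slightly more direct: once you have a subsolution $u$, Proposition~\ref{sa} immediately gives $S_a(y,x)\geq u(x)-u(y)>-\infty$ for all $x,y$, so there is no need to pass through the cycle characterization of Lemma~\ref{lemmotto}; your telescoping along cycles is essentially the special case $x=y$ of Proposition~\ref{sa}, followed by an extra step.
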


\smallskip
Moreover:

\begin{Corollary}\label{coro} Given $a\geq c$,  $x$, $y$ in $\VV$, there exists a simple path $\eta$ with
$\oo(\eta)=x$, $\tt(\eta)=y$ such that $\si_a(\eta)= S_a(x,y)$.
\end{Corollary}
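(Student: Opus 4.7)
The plan is to reduce the infimum defining $S_a(x,y)$ to a minimum over the finite set of simple paths from $x$ to $y$, by exploiting the nonnegativity of $\si_a$ on cycles which is guaranteed by $a \geq c$.

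First I would invoke Corollary \ref{corsuper} together with Lemma \ref{lemmotto} to record the key fact that, since $a\geq c$, every cycle $\zeta$ satisfies $\si_a(\zeta)\geq 0$. Next, given any path $\xi=(e_1,\ldots,e_M)$ from $x$ to $y$ which fails to be simple, I would describe a \emph{cycle-removal} step: pick indices $i<j$ with $\tt(e_i)=\tt(e_j)$ (such indices exist by non-simplicity, apart from the trivial coincidence of endpoints in case $x=y$ that we treat by convention as a simple loop-free path of length $0$). Then $\zeta=(e_{i+1},\ldots,e_j)$ is a cycle, and the concatenation $\xi':=(e_1,\ldots,e_i,e_{j+1},\ldots,e_M)$ is a well-defined path from $x$ to $y$, since $\oo(e_{j+1})=\tt(e_j)=\tt(e_i)$. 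By additivity of $\si_a$ along paths and the cycle estimate, $\si_a(\xi')=\si_a(\xi)-\si_a(\zeta)\leq\si_a(\xi)$.

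Iterating this procedure strictly decreases the number of edges, so after finitely many steps one arrives at a simple path $\eta$ from $x$ to $y$ with $\si_a(\eta)\leq\si_a(\xi)$. This shows that the infimum in \eqref{Saa} may be restricted to simple paths: $S_a(x,y)=\inf\{\si_a(\eta)\mid\eta\text{ simple path from }x\text{ to }y\}$. Since the graph is finite, there are only finitely many simple paths linking $x$ to $y$ (as noted in Section~\ref{prelim}), hence this infimum is actually a minimum and is attained at some simple path $\eta$.

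The only genuinely delicate point is the cycle-removal bookkeeping, namely verifying that the shortcut $\xi'$ is a legitimate path (concatenation condition at the cut) and that the resulting terminal vertices are still $x$ and $y$; everything else is a direct combination of the finiteness of simple paths with the sign condition on cycles provided by $a\geq c$. Connectedness of $\XX$ guarantees that the set of candidate paths is non-empty, so the procedure can be initialized.
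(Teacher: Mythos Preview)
Your proof is correct and follows essentially the same approach as the paper's: both arguments use cycle removal (justified via Lemma~\ref{lemmotto} and Corollary~\ref{corsuper}) to reduce the infimum to simple paths, then conclude by finiteness of the set of simple paths in a finite graph. Your version is slightly more explicit about the concatenation bookkeeping and the non-emptiness of the candidate set via connectedness, but the underlying argument is identical.
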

\begin{proof} Let $\xi=(e_i)_{i=1}^M$ be any path linking $x$ to $y$. If $\xi$ is
not simple there are indices $k > j$  such that $\tt(e_i) =
\tt(e_j)$.  We assume, to ease notations, that $k <M$, the case
$k=M$ can be treated with straightforward modifications.

We   have that $(e_i)_{i=j+1}^k$ is a cycle and the paths
$(e_i)_{i=1}^j$, $(e_i)_{i=k+1}^M$ are concatenated. We get,
according to Lemma \ref{lemmotto} that
\[\si_a(\xi) = \si_a \big ((e_i)_{i=1}^j \big ) + \si_a \big ((e_i)_{i=j+1}^k \big ) + \si_a
\big ((e_i)_{i=k+1}^M \big ) \geq \si_a \big ((e_i)_{i=1}^j \big )+
\si_a \big ((e_i)_{i=k+1}^M\big )\] and $(e_i)_{i=1}^j \cup
(e_i)_{i=k+1}^M$ is still a path linking $x$ to $y$. By iterating
the above procedure, we remove all cycles properly contained in
$\xi$ and  end up with a simple curve $\xi_0$ with $\oo(\xi_0)=x$,
$\tt(\xi_0)=y$ and $\si_a(\xi_0) \leq \si_a(\xi)$. This shows that
$S_a(x,y)$ can be realized as the infimum of simple paths from $x$
to $y$. Since there are finitely many of such paths, we get the
assertion.

\end{proof}

\bigskip

The condition in Corollary  \ref{corsuper} is actually necessary and
sufficient, as shown by the next result. In the proof we will use a
form of the basic {\it Bellman optimality principle} adapted to our
frame. It can be stated as follows: if $\xi= (e_i)_{i=1}^M$  is a
path   with
\[\si_a(\xi)= S_a(\oo(e), \tt(e))\]
 and $1 \leq j < k \leq M$, then $\eta := (e_i)_{i=j}^k$ satisfies
$\si_a(\eta)= S_a(\oo(e_j), \tt(e_k))$.

\smallskip

\begin{Proposition}\label{sasub}  Assume $S_a \not\equiv - \infty$. Given $y \in \VV$, the function
$u= S_a(y,\cdot)$ is solution to \eqref{HJa} in $\VV \setminus
\{y\}$ and subsolution  to \eqref{HJa} in $\VV$.\\
\end{Proposition}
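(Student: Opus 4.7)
The plan is to verify the two assertions separately, relying on the triangle inequality \eqref{triangle} for $S_a$ and the existence of a simple minimizing path supplied by Corollary \ref{coro}.

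For the subsolution statement, I would argue as follows. Given any $x \in \VV$ and any $e\in\EE_x$, the reversed edge $-e$ is by itself a path from $\tt(e)$ to $x=\oo(e)$, hence $S_a(\tt(e),x)\leq \si_a(-e)$. Combining with \eqref{triangle},
\[ u(x) \,=\, S_a(y,x) \,\leq\, S_a(y,\tt(e)) + S_a(\tt(e),x) \,\leq\, u(\tt(e)) + \si_a(-e). \]
Taking the minimum over $e\in\EE_x$ gives \eqref{subsol}. Equivalently, the subsolution property is a one-line consequence of Proposition \ref{sa} applied to $u$ itself, since $u(x)-u(y')=S_a(y,x)-S_a(y,y')\leq S_a(y',x)$ for all $x,y'\in\VV$.

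For the solution property at $x\neq y$, the strategy is to saturate the previous inequality by choosing an edge that reverses the final step of a minimizing path. Using Corollary \ref{coro}, I would pick a simple path $\xi=(e_1,\ldots,e_M)$ linking $y$ to $x$ with $\si_a(\xi)=S_a(y,x)=u(x)$; since $x\neq y$ the path is non-trivial, so $M\geq 1$. I then set $e^* := -e_M$, so that $e^*\in\EE_x$ and $\tt(e^*)=\oo(e_M)$. A Bellman-type optimality argument — if some path $\eta$ from $y$ to $\oo(e_M)$ were strictly shorter than the prefix $(e_1,\ldots,e_{M-1})$, then $\eta\cup(e_M)$ would beat $\xi$, contradicting minimality — yields $u(\tt(e^*))=S_a(y,\oo(e_M))=\si_a(\xi)-\si_a(e_M)$, hence
\[ u(\tt(e^*)) + \si_a(-e^*) \,=\, u(\oo(e_M)) + \si_a(e_M) \,=\, \si_a(\xi) \,=\, u(x). \]
Together with the subsolution inequality already established, this forces equality in \eqref{subsol} at $x$, i.e.\ the solution property.

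The one genuinely delicate point is the edge case $M=1$, in which the ``prefix'' is empty and the Bellman step implicitly appeals to the convention $S_a(y,y)=0$. This is coherent with the standing hypothesis $S_a\not\equiv -\infty$: by Lemma \ref{lemmotto} no cycle has negative $\si_a$-length, so adjoining the trivial path at $y$ assigns the value $0$ to $S_a(y,y)$ without altering any other $S_a$-value. With this convention in force, the argument above applies uniformly for all $M\geq 1$; and since the solution property is only claimed on $\VV\setminus\{y\}$, no further analysis of minimal cycles rooted at $y$ itself is needed.
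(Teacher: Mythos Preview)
Your proof is correct and follows essentially the same route as the paper's: the subsolution property via Proposition~\ref{sa} and the triangle inequality~\eqref{triangle}, and the solution property at $x\neq y$ by taking a minimizing path from Corollary~\ref{coro} and applying the Bellman optimality principle to peel off its last edge. Your explicit treatment of the $M=1$ case (and the accompanying convention on $S_a(y,y)$) is a point the paper simply glosses over.
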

\begin{proof}  The subsolution property comes from Proposition \ref{sa} and the triangle
inequality \eqref{triangle}. We proceed by showing that $u$ is solution
in $\VV \setminus \{y\}$.
 Let $x \neq y$,  then, by Corollary \ref{coro},  there is a  path $\xi= (e_i)_{i=1}^M$ linking $y$ to $x$
with
\[\si_a(\xi)= S_a(y,x).\]
By the Bellman optimality principle,  the path $\eta:= (e_i)_{i=1}^{M-1}$
satisfies
\[\si_a(\eta)= S_a(y, \tt (\eta)) = u(\tt (\eta)).\]
Consequently
\[u(x)= \si_a(\eta)+ \si_a(e_M)= u(\tt (\eta)) + \si_a(e_M)\]
with $- e_M \in {\EE}_x$. Hence
\[ u(x)- u(\tt(- e_M)) = u(x)- u(\tt (\eta)) = \si_a(e_M).\]
This concludes the proof.
\end{proof}
\smallskip

Using Proposition \ref{sa} and the triangle inequality
\eqref{triangle}, we also obtain

\begin{Corollary}\label{sasubbis} The function
\[ x \mapsto -S_c(x,y) \]
is a critical subsolution for any  fixed $y \in \VV$.
\end{Corollary}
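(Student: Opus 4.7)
The plan is to derive this directly from the characterization of subsolutions provided by Proposition \ref{sa}, namely that $u$ is a subsolution to ($\mathcal{D}FEc$) if and only if
\[u(x)-u(z) \leq S_c(z,x) \qquad \text{for all } x,z \in \VV.\]
So I would set $u(x):=-S_c(x,y)$ and check this inequality for arbitrary $x,z \in \VV$. Substituting, the required bound reads
\[-S_c(x,y) + S_c(z,y) \leq S_c(z,x),\]
which rearranges to
\[S_c(z,y) \leq S_c(z,x) + S_c(x,y).\]
This is precisely the triangle inequality \eqref{triangle}, so the subsolution condition is verified.

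Before invoking Proposition \ref{sa}, I would briefly justify that the function $x \mapsto -S_c(x,y)$ is actually $\R$-valued (and not identically $-\infty$ or $+\infty$). Since $c = c(\HN) \geq a_0$, Corollary \ref{corsuper} gives $S_c \not\equiv -\infty$, and by Lemma \ref{lemmotto} this is equivalent to $\sigma_c(\xi)\geq 0$ for every cycle. As a consequence, for every pair of vertices $z,x$ the infimum $S_c(z,x)$ is taken over a nonempty finite set of simple paths (by Corollary \ref{coro}, or already by the argument inside its proof), hence $S_c(z,x)\in\R$. This ensures the computation above is legitimate.

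The proof is therefore just these two observations — finiteness of $S_c$ and the triangle inequality — combined with Proposition \ref{sa}. There is no real obstacle: the only thing to be careful about is the order of the arguments of $S_c$, since the intrinsic length is not symmetric in general (reversing an edge changes $\sigma_a(e)$ to $-\sigma_a(-e)$), so one must plug $u(x)=-S_c(x,y)$ into the inequality in the exact form given by Proposition \ref{sa}, i.e.\ with $S_c(z,x)$ and not $S_c(x,z)$, and check that the rearrangement matches \eqref{triangle} verbatim.
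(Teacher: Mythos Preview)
Your proof is correct and follows exactly the approach indicated in the paper, which simply states that the result is obtained ``using Proposition~\ref{sa} and the triangle inequality~\eqref{triangle}.'' Your additional remark on the finiteness of $S_c$ is a harmless (and welcome) clarification.
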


\medskip

Combining Corollary \ref{corsuper} and Proposition \ref{sasub} we
get

\begin{Corollary} The distance  $S_a
\not\equiv - \infty$ if and only if $a \geq c$.\\
\end{Corollary}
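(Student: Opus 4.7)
The plan is essentially to assemble the two directions from previously established material, since each inclusion has already been prepared in the preceding results.

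For the forward direction ($a \geq c \Rightarrow S_a \not\equiv -\infty$), I would simply invoke Corollary \ref{corsuper}, which is precisely this implication and has already been deduced from Proposition \ref{sa} (a subsolution exists at level $c$ by definition, hence at every $a\geq c$ by monotonicity of $\si_a$ in $a$, and Proposition \ref{sa} then rules out $S_a \equiv -\infty$ via the inequality $u(x)-u(y) \leq S_a(y,x)$).

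For the converse direction ($S_a \not\equiv -\infty \Rightarrow a \geq c$), I would fix an arbitrary vertex $y \in \VV$ and consider the function $u(\cdot) := S_a(y, \cdot)$. By Proposition \ref{sasub}, under the hypothesis $S_a \not\equiv -\infty$, this function is a (global) subsolution to \eqref{HJa} on $\VV$. Hence the set appearing in the definition
\[
c = \min\{a \geq a_0 \mid \text{\eqref{HJa} admits subsolutions}\}
\]
is non-empty at this value of $a$, so $a$ belongs to it, which forces $a \geq c$.

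There is really no serious obstacle here: the corollary is a clean repackaging of Corollary \ref{corsuper} and Proposition \ref{sasub}. The only mildly delicate point is the implicit understanding that $a \geq a_0$ is required for $\si_a$ (and thus $S_a$) to be defined in the first place, so that the statement is tacitly quantified over $a \in [a_0,+\infty)$; this is consistent with the definition \eqref{defcritvalue} of $c$ and with $c \geq a_0$, so no tension arises.
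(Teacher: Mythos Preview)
Your proposal is correct and matches the paper's own approach exactly: the paper simply states that the corollary follows by combining Corollary \ref{corsuper} (for the direction $a \geq c \Rightarrow S_a \not\equiv -\infty$) and Proposition \ref{sasub} (for the converse, since the existence of the subsolution $S_a(y,\cdot)$ forces $a \geq c$ by the definition of $c$). Your remark on the tacit restriction $a \geq a_0$ is also consistent with the paper's standing conventions.
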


\medskip
We further have

\medskip

\begin{Proposition}\label{cycle} Given $y \in \VV$, the function $x \longmapsto S_a(y,x)$ is solution to
\eqref{HJa} if and only if there exists a  cycle $\xi$ incident on
$y$ with $\si_a(\xi)=0$.
\end{Proposition}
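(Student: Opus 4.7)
The plan is to exploit Proposition \ref{sasub}, which already asserts that $u(x):=S_a(y,x)$ is a subsolution to \eqref{HJa} on all of $\VV$ and a solution on $\VV\setminus\{y\}$. Consequently the only thing left to decide is whether the equation
\[
u(y) \;=\; \min_{e\in\EE_y}\bigl(u(\tt(e))+\si_a(-e)\bigr)
\]
holds at the distinguished vertex $y$. Using $u(y)=S_a(y,y)=0$ (consistent with Lemma \ref{lemmotto}, Proposition \ref{sa}, and the triangle inequality), the solvability of \eqref{HJa} at $y$ reduces to the identity
\[
0 \;=\; \min_{e\in\EE_y}\bigl(S_a(y,\tt(e))+\si_a(-e)\bigr).
\]

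The next step is to recognize the right-hand side as the minimum weight of a cycle through $y$. For the inequality ``$\geq$'', I would pick $e\in\EE_y$, use Corollary \ref{coro} to select a path $\eta$ with $\oo(\eta)=y$, $\tt(\eta)=\tt(e)$ realizing $\si_a(\eta)=S_a(y,\tt(e))$, and observe that $\eta\cup(-e)$ is a cycle incident on $y$ of weight $S_a(y,\tt(e))+\si_a(-e)$. For ``$\leq$'', given any cycle $\xi=(e_1,\dots,e_M)$ incident on $y$, I set $e:=-e_M\in\EE_y$ and decompose $\si_a(\xi)=\si_a((e_1,\dots,e_{M-1}))+\si_a(e_M)\geq S_a(y,\tt(e))+\si_a(-e)$, where the initial segment is a path from $y$ to $\oo(e_M)=\tt(e)$ (with the degenerate case $M=1$, i.e.\ $\xi$ a loop at $y$, handled by the convention that the empty initial path contributes $0$, matching $S_a(y,y)=0$).

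Combining the two, the equation at $y$ is equivalent to
\[
0 \;=\; \min\bigl\{\si_a(\xi)\mid \xi\text{ is a cycle incident on }y\bigr\}.
\]
Now Lemma \ref{lemmotto} (whose hypothesis is met since $a\geq c$, otherwise $S_a\equiv-\infty$ and the whole statement is vacuous) ensures $\si_a(\xi)\geq 0$ for every such cycle. Moreover the infimum is attained: exactly as in the proof of Corollary \ref{coro}, any cycle can be reduced, by repeatedly splitting off proper sub-cycles (each of non-negative weight by Lemma \ref{lemmotto}), to a simple cycle of no greater weight, and there are only finitely many simple cycles in a finite graph. Hence the identity holds if and only if there exists a cycle $\xi$ incident on $y$ with $\si_a(\xi)=0$, which is exactly the claimed equivalence.

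The only delicate point is the bookkeeping in the middle paragraph: correctly tracking the reversal $e\mapsto -e$, the identification $\tt(-e)=\oo(e)$, and the loop case $\tt(e)=y$. Once this is done, the argument reduces to two applications of results already proved (the existence of a minimizing simple path from Corollary \ref{coro} and the non-negativity of cycles from Lemma \ref{lemmotto}) combined with the Bellman-type optimality that is implicit in the definition of $S_a$.
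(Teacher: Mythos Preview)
Your reduction to the single vertex $y$ via Proposition \ref{sasub} matches the paper's starting point. The gap is the claim $u(y)=S_a(y,y)=0$: the results you cite only give $S_a(y,y)\geq 0$, and equality is precisely the existence of a zero-weight cycle through $y$ --- which is what you are trying to establish. This matters because your identification of the right-hand side with the minimum cycle weight through $y$ also leans on $S_a(y,y)=0$ in the $M=1$ case you single out: if $\xi=(e_1)$ is a loop and you set $e=-e_1$, the corresponding term is $S_a(y,\tt(e))+\si_a(-e)=S_a(y,y)+\si_a(e_1)$, not $\si_a(e_1)$, so your ``empty initial path contributes $0$'' convention is not available here (paths are nonempty sequences of edges; compare Lemma \ref{Aubrynonempty}, where $S_c(y,y)=0$ singles out the Aubry set rather than holding for every $y$).

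In the $(\Longleftarrow)$ direction this is harmless: the hypothesis of a zero cycle at $y$ immediately forces $0\le S_a(y,y)\le \si_a(\xi)=0$, after which your argument is sound. The paper proves this direction in essentially the same way, only more directly: from the squeeze $0\le S_a(y,y)\le \si_a(e_M)+S_a(y,\oo(e_M))\le \si_a(\xi)=0$ one reads off that $-e_M\in\EE_y$ realises the minimum in \eqref{HJa}. For $(\Longrightarrow)$, by contrast, your argument as written is circular. The paper takes a different route here: rather than analysing the equation at $y$, it invokes the backward-iteration argument of Proposition \ref{precritico} (applied to the solution $S_a(y,\cdot)$), which produces a path along which $u$ telescopes and hence, by finiteness of the graph, a cycle of weight zero.
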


\begin{proof}

($\Longrightarrow$) We will prove  in Proposition \ref{precritico} a more  general
property, namely that if the equation \eqref{HJa} admits a solution,
then there is a cycle $\xi$ with  with $\si_a(\xi)=0$.

\noindent ($\Longleftarrow$) Assume the existence of a  cycle, say
$\xi=(e_i)_{i=1}^M $, with $\si_a(\xi)=0$ incident on $y$.  Up to
relabelling the $e_i$'s, we can set $y= \oo (\xi) = \tt (\xi)$. We
claim that $u := S_a(y,\cdot)$ is a solution on the whole $\VV$. In
force of Proposition \ref{sasub}, it is enough to prove the
assertion at $y$. We have
\[0 \leq S_a(y,y)=u(y)   \leq \si_a(e_M) + S_a(y, \oo (e_M)) \leq \si_a (\xi),\]
and since $ \si_a (\xi) =0$, all the inequalities in the above
formula must indeed be equalities;  in particular
\[ u(y)- u(\tt(- e_M)) - \si_a(e_M) = u(y) - S_a(y, \oo (e_M))
- \si_a(e_M) = 0\] with $- e_M \in {\EE}_y$. This proves the claim.
\end{proof}

\bigskip
As announced, we complete the above proof by showing:

\begin{Proposition}\label{precritico} If the equation \eqref{HJa} admits a solution,  then there is a
 cycle $\xi$ with $\si_a(\xi)=0$.
\end{Proposition}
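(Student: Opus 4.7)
The plan is to exploit the solution property to extract, at each vertex, an incoming edge that achieves equality in the subsolution condition, and then use finiteness of the graph to produce a cycle along which the $\sigma_a$ contributions telescope.

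More concretely, suppose $u$ solves \eqref{HJa}. By definition, for every $x \in \VV$ there exists $e_x \in \EE_x$ realizing the minimum, i.e.
\[ u(x) = u(\tt(e_x)) + \sigma_a(-e_x). \]
Setting $f_x := -e_x$, so that $\tt(f_x) = x$ and $\oo(f_x) = \tt(e_x)$, this reads
\[ u(\tt(f_x)) - u(\oo(f_x)) = \sigma_a(f_x), \]
that is, $du(f_x) = \sigma_a(f_x)$: each vertex has a distinguished incoming edge saturating the subsolution inequality \eqref{subsolbis}.

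Next, I pick any $x_0 \in \VV$ and recursively define $x_{n+1} := \oo(f_{x_n})$, so that $f_{x_n}$ is an edge from $x_{n+1}$ to $x_n$. Since $|\VV|$ is finite, there must exist indices $j < k$ with $x_j = x_k$. The sequence
\[ \xi := (f_{x_{k-1}}, f_{x_{k-2}}, \ldots, f_{x_j}) \]
is a concatenation of edges (since $\tt(f_{x_{i-1}}) = x_{i-1} = \oo(f_{x_{i-2}})$ for the appropriate indices, after relabelling) and it is a cycle, with $\oo(\xi) = x_k = x_j = \tt(\xi)$.

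Finally, summing the saturation identities along $\xi$ produces a telescoping sum:
\[ \sigma_a(\xi) = \sum_{i=j}^{k-1} \sigma_a(f_{x_i}) = \sum_{i=j}^{k-1} \bigl(u(x_i) - u(x_{i+1})\bigr) = u(x_j) - u(x_k) = 0, \]
which is the desired cycle. The only delicate point is the bookkeeping of orientations when reversing $e_x$ into $f_x$ and checking that the $f_{x_i}$'s truly concatenate into a cycle; once that is settled, the argument is purely combinatorial and the telescoping does the rest.
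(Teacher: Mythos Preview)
Your proof is correct and follows essentially the same approach as the paper: both arguments exploit the solution property to select, at each vertex, an incoming edge saturating the inequality $du(e)\leq\si_a(e)$, iterate this backward selection, invoke finiteness of $\VV$ to extract a cycle, and telescope along it. The only cosmetic difference is that you fix a global choice $x\mapsto f_x$ up front and then iterate the map $x\mapsto \oo(f_x)$, whereas the paper builds the path step by step; the content is identical.
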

\begin{proof} Let us assume that $v$ is  a solution to \eqref{HJa}. Take
any $x \in \VV$; by the  definition of solution, we can find an edge
$e$
 with terminal vertex $x$  such that
\[v(x)- v( \oo (e)) = \si_a( e).\]
By iterating backward the procedure, we can construct  for any $M$ a
path $\xi = (e_i)_{i=1}^M$  such that
\begin{equation}\label{cy1}
   v( \tt (e_j))- v (\oo (e_k)) = \si_a \left ( (e_i)_{i=k}^j \right
) \quad\hbox{ for any  $j \geq k$.}
\end{equation}
 Since the graph is finite,
taking $M$ large enough, we have that for suitable indices $j > k$,
the path $(e_i)_{i=k}^j $ is a cycle, so that $v( \tt (e_j))- v (\oo
(e_k)) = 0$, and  the  relation \eqref{cy1} provides the assertion.
\end{proof}

\bigskip
The argument of the next proof is reminiscent of the one used for
the existence of critical solutions  of  Hamilton--Jacobi equations
in compact manifolds, see  \cite{FathiSiconolfi}.

\smallskip

\begin{Theorem}\label{critico} The critical equation ($\mathcal{D}\mathit{FEc}$) admits solutions.
\end{Theorem}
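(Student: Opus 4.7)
The plan is to reduce the existence of a critical solution to the existence, in the abstract graph $\XX$, of a single cycle of zero $\si_c$-length. Indeed, by Proposition \ref{cycle}, as soon as some cycle $\xi$ incident on a vertex $y$ satisfies $\si_c(\xi) = 0$, the function $S_c(y, \cdot)$ is a solution to the critical equation. So my entire task reduces to producing such a cycle, and I would split the argument into the two cases $c > a_0$ and $c = a_0$.

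In the case $c > a_0$ I would argue by contradiction. Since the minimum in the definition of $c$ is attained (Proposition \ref{stable}), a critical subsolution $u$ exists; summing $du \leq \si_c$ along any cycle gives $\si_c(\xi) \geq 0$ for every cycle, equivalently $S_c \not\equiv -\infty$ by Lemma \ref{lemmotto}. Assume for contradiction that no cycle has $\si_c$-length equal to zero; then every simple cycle satisfies $\si_c(\xi) > 0$, and by finiteness of the collection of simple cycles, combined with the continuity of $a \mapsto \si_a(e)$ provided by Lemma \ref{prestable}, I can pick $a^* \in [a_0, c)$ such that $\si_{a^*}(\xi) > 0$ simultaneously for every simple cycle. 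A straightforward induction on the number of edges (extracting an embedded sub-cycle and peeling it off, in the spirit of the proof of Corollary \ref{coro}) shows that the $\si$-length of an arbitrary cycle equals a sum of $\si$-lengths of simple sub-cycles, so $\si_{a^*}(\eta) > 0$ for every cycle $\eta$. By Lemma \ref{lemmotto} this implies $S_{a^*} \not\equiv -\infty$, and Proposition \ref{sasub} then yields a subsolution at the level $a^* < c$, contradicting the minimality of $c$.

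In the case $c = a_0$ I cannot lower $a$ below $a_0$, but a zero-length cycle can be constructed explicitly from the structure of $a_0$. Either $a_0 = a_\ga$ for some $\ga \in \EN \setminus \EN^*$, and then assumption {\bf (H$\ga$4)} together with \eqref{siasia} gives $\si^+_{a_0}(s) = \si^-_{a_0}(s)$ for every $s \in [0,1]$, so for $e = \Psi^{-1}(\ga)$ one has
\[
\si_{a_0}(e) + \si_{a_0}(-e) = \int_0^1 \si^+_{a_0}(t)\,dt - \int_0^1 \si^-_{a_0}(t)\,dt = 0,
\]
and the two-edge path $(e, -e)$ is a cycle based at $\oo(e)$; or $a_0 = c_\ga$ for some $\ga \in \EN^*$, in which case $e = \Psi^{-1}(\ga)$ is a loop and Proposition \ref{loop} gives $\min\{\si_{a_0}(e), \si_{a_0}(-e)\} = 0$, producing a one-edge cycle of zero length. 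In either situation, Proposition \ref{cycle} delivers the desired critical solution.

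The main difficulty I expect is the perturbation step in the case $c > a_0$: decreasing $a$ strictly below the critical level while simultaneously preserving the nonnegativity of every cycle relies both on the quantitative continuity in $a$ provided by Lemma \ref{prestable} and on the reduction from arbitrary cycles to the finite set of simple ones. The case $c = a_0$ is by contrast essentially built into the structural assumptions, with {\bf (H$\ga$4)} and Proposition \ref{loop} designed precisely to guarantee the existence of a zero-length one- or two-edge cycle at that borderline level.
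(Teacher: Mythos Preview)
Your proof is correct. The treatment of the case $c=a_0$ coincides with the paper's, but your argument for $c>a_0$ follows a genuinely different route. The paper assumes, by contradiction, that no $S_c(y,\cdot)$ is a global solution, and then takes a convex combination $u=\sum_y \lambda_y S_c(y,\cdot)$ to manufacture a \emph{strict} critical subsolution, i.e.\ one with $\d u(e)\le \si_c(e)-\delta$ for all $e$ and some fixed $\delta>0$; continuity of $a\mapsto\si_a(e)$ then lets one drop the level to some $b\in(a_0,c)$ while keeping $u$ a subsolution. You instead bypass the convex combination entirely and work directly with the cycle characterization of Lemma~\ref{lemmotto}: since there are only finitely many simple cycles and each has strictly positive $\si_c$-length, continuity lets you drop to $a^*<c$ while keeping all of them positive, and the peeling-off decomposition extends this to arbitrary cycles, so $S_{a^*}\not\equiv -\infty$ and Proposition~\ref{sasub} produces a forbidden subsolution. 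Your approach is more combinatorial and arguably more elementary; the paper's approach, on the other hand, mirrors the classical Fathi--Siconolfi construction on compact manifolds and has the side benefit of explicitly producing a strict subsolution at the critical level, a construction that resurfaces later in the regularity analysis (Lemma~\ref{preregola} and Theorem~\ref{regolaregola}).
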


\begin{proof} We break the argument according to whether $c = a_0$ or $c > a_0$. Let us first discuss the first
instance. If in addition $c = a_\ga$ for some arc $\ga$, and we set
$e= \Psi^{-1}(\ga)$, then we get from  \eqref{siasia},
\eqref{sigra1}, \eqref{sigra2} that
\[\si_c(e \cup (-e))=0 .\]
  If instead $a_0= c_\ga$
for some closed arc $\ga$ of the network, then  $e = \Psi^{-1}(\ga)$
is a loop  and  we obtain  by   Remark \ref{loop}
\[\si_c(e)=0 \;\;\hbox{or}\;\; \si_c(-e)=0.\]
In both cases,  we infer the existence of a critical solution in the
light of  Proposition \ref{cycle}.

 We
proceed considering the case  $c > a_0$. Let us assume by
contradiction that there are no critical solutions. For any $y \in
\VV$, setting $u_y=S_c(y,\cdot)$, we can therefore find by
Proposition \ref{sasub} a positive constant $\de_y$ with
\begin{equation}\label{critico0}
   \max_{e \in {\EE}_y} \big( u_y(y) - u_y(\tt(e)) - \si_c( - e) \big ) = -
\de_y.
\end{equation}
We define $u = \sum_y \la_y \, u_y$, where
the $\la_y$ are positive coefficients summing to $1$, and set
\[\de = \min_y  \la_y \, \de_y. \]
 Exploiting that all the $u_y$'s are subsolutions on the
whole $\VV$ and using \eqref{critico0}, we conclude that for any $e
\in \EE$
\begin{eqnarray}
  && u(\tt(e)) - u(\oo(e)) - \si_c(e)  \nonumber \\ && =\; \sum_{y \neq \tt(e)} \la_y \big (
 u_y(\tt(e)) - u_y(\oo(e))  - \si_c(e) \big ) + \la_{\tt(e)} \big (
u_{\tt(e)}(\tt(e)) -u_{\tt(e)}(\oo(e))  - \si_c(e) \big )  \label{critico1}\\
     && \leq \; - \la_{\tt(e)} \, \de_{\tt(e)} \leq  -\de. \nonumber
\end{eqnarray}
Owing to Lemma \ref{prestable} and the fact that $c >a_0$, there is
$a_0 < b < c$ with
\[ \si_b(e) > \si_c(e) - \de \qquad\hbox{for every $e \in \EE$};\]
then we  deduce from \eqref{critico1} that
\[  u(\tt(e)) - u(\oo(e))  - \si_b(e) \leq 0 \qquad\hbox{for every $e$.}\]
This proves that $u$ is a subsolution to \eqref{HJa} with $a =b$,
which is impossible because $b < c$. Therefore the maximum in
\eqref{critico0} must be $0$ for some $y_0$, which in turn implies
that $S_c(y_0, \cdot)$ is a critical solution, as it was claimed.
\end{proof}

\smallskip

\begin{Remark}\label{cycy2}  Let $u$ be a  solution to $(\mathcal{D}\mathit{FEc})$. Let $e$ be a loop with $\oo(e)= \tt(e)=x$,
and $\ga=\Psi(e)$ is hence a closed curve. If  $c  < c_\ga$,
then, according to Remark \ref{loop}
\[0 = u(\oo(e)) -u(\tt(e)) < \si_c(e), \;\; 0 = u(\oo(-e)) -u(\tt(-e)) <
\si_c(-e)\] which shows that neither $e$ nor $-e$ realizes
\[\min_{e \in {\EE}_x } \big(u(\tt(e)) + \si_a(-e)\big).\]
This in turn implies that the edge $e$, and consequently $-e$, can be
removed from the edges of $\XX$ without affecting the status of
solution for $u$ or any other critical solution.

Things are different if $c =c_\ga$ because in this case, see Remark
\ref{loop},
\[0= \min \{\si_c(e),\si_c(-e) \}= u(\oo(e)) - u(\tt(e))= u(\oo(-e))
-u(\tt(-e)).\]\\
\end{Remark}

\medskip

\subsection{The Aubry set and some structural properties of  solutions}
Inspired by what discussed in the previous subsection, we introduce
the following definition.

\begin{Definition}
The {\it Aubry set} is defined as
\begin{equation}\label{Aubry1}
\A_\XX^*= \A_\XX^*(\HN)= \{e \in \EE \mid \;\hbox{belonging to some
cycle with $\si_c(\xi) = 0$}\}.
\end{equation}
The {\it projected Aubry set} is given by
\begin{equation}\label{Aubry2}
\A_\XX =\A_\XX(\HN)= \{y \in \VV \mid \exists \,  \xi \;\hbox{ cycle
incident on $y$ with $\si_c(\xi) = 0$}\}.
\end{equation}
The projected Aubry set is partitioned in {\it static classes},
defined  as the equivalence classes with respect to the relation
\[S_c(x,y)+S_c(x,y)=0.\]
Equivalently $x$ and $y$ belong to the same static class if there is
a cycle $\xi$ with $\si_c(\xi)=0$ incident on both of them; in
particular, the whole cycle $\xi$ is then contained in this static
class.

\end{Definition}

\smallskip

\begin{Remark}\label{classau} Clearly, $x \in \A_\XX$ if and only if $x= \oo(e)=\tt(e')$, for
some $e$, $e'$  in  $\A_\XX^*$; moreover, if  $e \in \A_\XX^*$, then
$\oo(e)$ and $\tt(e)$ belong to $\A_\XX$. The converse of this last
property  is not true because, for instance, if $e \in \A_\XX^*$
then $-e$ might not belong to $\A_\XX^*$.  It is also
possible to have a pair of adjacent vertices belonging to different
static classes of $\A_\XX$ linked by an edge not in $\A_\XX^*$, or
even vertices of the same static classes linked by multiple edges
not all belonging
to $\A_\XX^*$. \\
\end{Remark}
\smallskip

We immediately derive  from Proposition \ref{precritico} and
Theorem \ref{critico} the following result.\\

\begin{Lemma}\label{Aubrynonempty} The Aubry sets are nonempty. Moreover
\[\A_\XX =\{y\in \VV:\; S_c(y,y)=0\}= \{y\in \VV:\; S_c(y,\cdot) \;\hbox{is solution to $(\mathcal{D}\mathit{FEc})$}\}.\]
\end{Lemma}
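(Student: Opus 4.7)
The plan is to deduce all three assertions in the lemma from results that have already been established, so the work is really one of assembling Proposition \ref{precritico}, Theorem \ref{critico}, Proposition \ref{cycle}, Corollary \ref{coro}, and Lemma \ref{lemmotto} in the right order.

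First I would dispose of non-emptiness. Theorem \ref{critico} guarantees the existence of at least one critical solution $v$ to $(\mathcal{D}\mathit{FEc})$. Feeding $v$ into Proposition \ref{precritico} produces a cycle $\xi$ with $\si_c(\xi)=0$, and by the very definitions \eqref{Aubry1} and \eqref{Aubry2} all edges of $\xi$ lie in $\A_\XX^*$ and all vertices of $\xi$ lie in $\A_\XX$. This shows both sets are non-empty.

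Next I would establish the second equality $\A_\XX=\{y\in\VV:S_c(y,\cdot) \text{ is solution}\}$. This is essentially a restatement of Proposition \ref{cycle}: by that proposition, $S_c(y,\cdot)$ is solution to $(\mathcal{D}\mathit{FEc})$ if and only if some cycle incident on $y$ carries $\si_c$-weight zero, and the right-hand condition is exactly the definition of $y\in\A_\XX$ in \eqref{Aubry2}. So this equality reduces to quoting Proposition \ref{cycle}.

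Then I would show $\A_\XX=\{y\in\VV:S_c(y,y)=0\}$ by a double inclusion. For $\A_\XX\subseteq\{S_c(y,y)=0\}$: if $y\in\A_\XX$ pick the cycle $\xi$ incident on $y$ with $\si_c(\xi)=0$ and reparametrize it so that $\oo(\xi)=\tt(\xi)=y$; this gives $S_c(y,y)\leq \si_c(\xi)=0$, while Lemma \ref{lemmotto} (applicable because $S_c\not\equiv-\infty$ at the critical level, by Corollary \ref{corsuper}) yields $S_c(y,y)\geq 0$. Hence $S_c(y,y)=0$. Conversely, if $S_c(y,y)=0$, apply Corollary \ref{coro} with $x=y$ at level $a=c$ to produce a simple path $\eta$ from $y$ to $y$ with $\si_c(\eta)=S_c(y,y)=0$; this $\eta$ is a cycle incident on $y$ with zero $\si_c$-weight, so $y\in\A_\XX$.

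The only point that requires a little attention is making sure $S_c(y,y)$ really refers to the infimum over non-trivial cycles from $y$ to $y$ (so that the trivial "empty path" is not at issue); this is implicit in the definition \eqref{Saa} and the fact that paths are sequences $(e_i)_{i=1}^M$ with $M\geq 1$. Apart from this bookkeeping, everything follows by quoting the earlier results, so I do not anticipate any genuine obstacle.
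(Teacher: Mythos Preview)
Your proposal is correct and follows essentially the same route as the paper, which simply asserts that the lemma is immediately derived from Proposition \ref{precritico} and Theorem \ref{critico}. Your assembly is in fact more explicit: the paper's terse citation covers non-emptiness, while the two characterizations rely in addition on Proposition \ref{cycle}, Lemma \ref{lemmotto}, and Corollary \ref{coro}, exactly as you spell out.
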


\medskip

\medskip

We have a structural  result on critical solutions. By admissible
trace $g$ on  $\VV' \subset \VV$ (for the critical equation), we
mean a function satisfying

\begin{equation}\label{compcomp}
   g(x)- g(y) \leq S_c(y,x) \qquad\hbox{for any $x$, \,$y$ in $\VV'$}.
\end{equation}

\medskip

\begin{Theorem}\label{uniquenessset}
 Given an admissible trace $g$ on $\A_\XX$, the unique
solution to $(\mathcal{D}\mathit{FEc})$ taking the value $g$ on
$\A_\XX$ is
\begin{equation}\label{formsol}
  v (x):= \min \{g(y) + S_c(y,x) \mid y \in \A_\XX\}.
\end{equation}
In particular, $\A_\XX$ represents a uniqueness set for the
equation.\\
\end{Theorem}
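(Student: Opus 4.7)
The plan is to verify, in order: (a) $v$ is a subsolution on $\VV$; (b) $v$ satisfies the solution equality at every vertex; (c) $v$ agrees with $g$ on $\A_\XX$; (d) any other critical solution agreeing with $g$ on $\A_\XX$ coincides with $v$.

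Step (a) is immediate: for each fixed $y\in\A_\XX$ the function $x\mapsto g(y)+S_c(y,x)$ differs from $S_c(y,\cdot)$ by a constant, hence is a critical subsolution by Proposition \ref{sasub}; pointwise minima of subsolutions remain subsolutions, since the inequality $u(x)\leq u(\tt(e))+\si_c(-e)$ is preserved under taking minima. For step (b), fix $x\in\VV$ and let $y_0\in\A_\XX$ realize the minimum in \eqref{formsol}. By Lemma \ref{Aubrynonempty}, $S_c(y_0,\cdot)$ is a \emph{solution} on the whole of $\VV$, so there is $e_0\in\EE_x$ with $S_c(y_0,x)=S_c(y_0,\tt(e_0))+\si_c(-e_0)$. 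Adding $g(y_0)$ yields $v(x)\geq v(\tt(e_0))+\si_c(-e_0)$; combined with the subsolution inequality from (a), this forces equality, so the solution condition holds at $x$.

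Step (c) uses admissibility \eqref{compcomp} together with $S_c(y,y)=0$ for $y\in\A_\XX$ (again Lemma \ref{Aubrynonempty}): if $x\in\A_\XX$, choosing $y=x$ in the minimum gives $v(x)\leq g(x)$, while admissibility gives $g(x)\leq g(y)+S_c(y,x)$ for every $y\in\A_\XX$, yielding $g(x)\leq v(x)$.

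Step (d) is the most delicate. Let $u$ be any critical solution agreeing with $g$ on $\A_\XX$. As a subsolution, Proposition \ref{sa} yields $u(x)-u(y)\leq S_c(y,x)$ for every $x,y\in\VV$; restricting $y$ to $\A_\XX$ and using $u(y)=g(y)$ gives $u(x)\leq v(x)$. For the reverse inequality I imitate the backward construction from the proof of Proposition \ref{precritico}: starting at $x_0=x$, the solution property for $u$ produces iteratively a sequence of vertices $x_0,x_1,x_2,\ldots$ and edges $f_i$ with $\oo(f_i)=x_i$, $\tt(f_i)=x_{i-1}$ and $u(x_{i-1})-u(x_i)=\si_c(f_i)$. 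Since $\VV$ is finite there exist indices $k<j$ with $x_k=x_j$; the subpath $(f_j,f_{j-1},\ldots,f_{k+1})$ is then a cycle of $\si_c$-length $u(x_k)-u(x_j)=0$, and in particular $y:=x_k$ belongs to $\A_\XX$. The telescoping identity along $(f_k,\ldots,f_1)$ then gives $u(x)-u(y)=\si_c((f_k,\ldots,f_1))\geq S_c(y,x)$, which together with the subsolution bound forces $u(x)-u(y)=S_c(y,x)$, so $u(x)=g(y)+S_c(y,x)\geq v(x)$.

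The main obstacle I anticipate is the bookkeeping in step (d), both in keeping the orientation of the backward edges straight and in ensuring that the pigeonhole argument really produces a cycle of zero $\si_c$-length: the latter relies crucially on the fact that along an optimal backward chain for $u$ \emph{any} subsegment is automatically calibrated, so the cycle extracted between two coincident vertices inherits the optimality identity and hence the zero-length property that places its vertices in $\A_\XX$.
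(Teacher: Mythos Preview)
Your proof is correct and follows essentially the same route as the paper's: the subsolution property via Proposition \ref{sa}, the solution property by touching $v$ from above with $g(y_0)+S_c(y_0,\cdot)$ at an optimal $y_0\in\A_\XX$, and uniqueness via the backward calibrated chain of Proposition \ref{precritico} and pigeonhole to land in $\A_\XX$. The only cosmetic difference is that in step~(b) the paper phrases the comparison as ``$x_0$ is a maximizer of $v-\psi$'' whereas you compute directly with the defining minimum; your bookkeeping in step~(d) is in order.
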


\begin{proof}  Taking into account \eqref{compcomp} and the fact that $S_c(y,y)=0$ for any $y \in \A_\XX$, we deduce that $g$ and $v$ coincide on $\A_\XX$.
The function $v$ is a  subsolution in force of Proposition \ref{sa}.
Take $x_0 \in \VV$, then
\[v(x_0) = g(y_0)+ S_c(y_0,x_0) \qquad\hbox{for some $y_0 \in \A_\XX$.}\]
We know that the  function
\[ \psi(x)= g(y_0) + S_c(y_0,x)\]
is a critical solution, in addition $x_0$ is a maximizer of $v
-\psi$ in $\VV$,  consequently
\[  \psi(x_0) - \psi(\tt(e))  \leq  v(x_0)- v(\tt(e))
\qquad\hbox{ for any $e \in {\EE}_{x_0}$.}\] Since $\psi$ is
critical solution, there is $e_0 \in {\EE}_{x_0}$ with
\[0=   \psi(x_0) - \psi(\tt(e_0)) - \si_c(- e_0) \leq v(x_0)- v(\tt(e_0)) - \si_c(-
e_0).\] Since $v$ is a subsolution, the inequality in the above
formula must be an equality. This shows that $v$ is a critical
solution.

Assume now that  $w$ is another solution agreeing with $g$ on
$\A_\XX$. Given any $x \in \VV$, we construct, arguing as in
Proposition \ref{precritico},  a path $\xi= (e_i)_{i=1}^M$ with $\tt
(\xi)=x$ and such that
\[
   w( \tt (e_j))- w (\oo (e_k)) = \si_c \left ( (e_i)_{i=k}^j  \right
) \quad\hbox{ for any  $j \geq k$.}\] If $M$ is sufficiently  large,
there must exist $ j_0 \geq k_0$ such that $ (e_i)_{i=k_0}^{j_0}$ is a
cycle. We deduce that there are $y \in \A_\XX$  and a path $\eta$
linking $y$ to $x$ with
\[w(x) = w(y) + \si_c(\eta) \geq g(y)+ S_c(y,x) \geq v(x).\]
Since the converse inequality holds true  by Proposition \ref{sa},
we get $w(x)= v(x)$. This ends the proof.
\end{proof}

\smallskip

We record for later use an immediate consequence of the above
result:
\begin{Corollary}\label{corunique}
 Given  $\VV' \subset \A_\XX$, and an admissible trace $g$ on it, the function
\begin{equation}\label{formsol}
  v (x):= \min \{g(y) + S_c(y,x) \mid y \in \VV'\}
\end{equation}
is a  solution to $(\mathcal{D}\mathit{FEc})$ taking the value $g$
on  $\VV'$.\\
\end{Corollary}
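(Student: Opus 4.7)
The strategy is to reduce directly to Theorem \ref{uniquenessset} by extending $g$ to an admissible trace on the full projected Aubry set $\A_\XX$. Namely, I would define $\tilde g : \A_\XX \to \R$ via the Hopf--Lax type formula
\[\tilde g(z) := \min\{g(y) + S_c(y,z) \mid y \in \VV'\}.\]
Once $\tilde g$ is shown to be admissible on $\A_\XX$ in the sense of \eqref{compcomp} and to coincide with $g$ on $\VV'$, Theorem \ref{uniquenessset} will produce a critical solution taking value $\tilde g$ on $\A_\XX$, and a short sandwich argument will identify it with $v$.

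The two required properties of $\tilde g$ are easy to verify. For any $y_0 \in \VV'$, Lemma \ref{Aubrynonempty} yields $S_c(y_0,y_0)=0$, so $\tilde g(y_0) \leq g(y_0)+S_c(y_0,y_0)=g(y_0)$; the reverse inequality is immediate from the admissibility \eqref{compcomp} of $g$ on $\VV'$. For admissibility of $\tilde g$ on $\A_\XX$, given $z_1, z_2 \in \A_\XX$ and $y^\ast \in \VV'$ attaining $\tilde g(z_1) = g(y^\ast) + S_c(y^\ast, z_1)$, the triangle inequality \eqref{triangle} yields
\[\tilde g(z_2) \leq g(y^\ast)+S_c(y^\ast,z_2) \leq g(y^\ast)+S_c(y^\ast,z_1)+S_c(z_1,z_2) = \tilde g(z_1)+S_c(z_1,z_2),\]
as required.

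Then Theorem \ref{uniquenessset} applied to $\tilde g$ gives that $w(x) := \min\{\tilde g(z)+S_c(z,x) \mid z \in \A_\XX\}$ is a critical solution taking value $\tilde g$ on $\A_\XX$. Restricting the minimum defining $w$ to $z \in \VV' \subset \A_\XX$ and using $\tilde g|_{\VV'} = g$ gives $w \leq v$. Conversely, for any $z \in \A_\XX$, applying the triangle inequality \eqref{triangle} inside the formula for $\tilde g(z)$ yields $\tilde g(z) + S_c(z,x) \geq \min_{y \in \VV'}\bigl(g(y)+S_c(y,x)\bigr) = v(x)$, hence $w \geq v$. I do not anticipate any serious obstacle: the existence theory has already been carried out in Theorem \ref{uniquenessset}, and the remaining work is a standard Mañé-type propagation of an admissible trace via the intrinsic distance $S_c$, which relies only on the triangle inequality \eqref{triangle} and the vanishing $S_c(y,y)=0$ on $\A_\XX$ from Lemma \ref{Aubrynonempty}.
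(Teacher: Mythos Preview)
Your argument is correct. The paper states the corollary as ``an immediate consequence'' of Theorem \ref{uniquenessset} and gives no further details, so your extension-of-trace reduction is a perfectly acceptable way to fill that in: extend $g$ to $\tilde g$ on $\A_\XX$, check admissibility via \eqref{triangle}, apply the theorem, and identify $w$ with $v$ by the two-sided sandwich. All steps are sound.

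A slightly more direct route---and likely what the authors had in mind by ``immediate''---is simply to observe that the existence half of the proof of Theorem \ref{uniquenessset} goes through verbatim with $\VV'$ in place of $\A_\XX$: the only property of the index set used there is that for each $y$ in it the function $S_c(y,\cdot)$ is a global critical solution, which holds precisely because $\VV' \subset \A_\XX$ (Lemma \ref{Aubrynonempty}). Your black-box reduction has the advantage of not reopening the theorem's proof; the direct rerun has the advantage of avoiding the auxiliary $\tilde g$ and the sandwich step. Both are short and either would be fine here.
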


\medskip

We can also derive a representation formula for solutions at $a
> c$ in some subset of $\VV$. To help understanding the next
statement, we recall that $S_a(x,x) >0$ for any $x \in \VV$ whenever
$a > c$.

\medskip

\begin{Theorem} \label{prop5.23}
Let $a > c$, $\VV' \subset \VV$. Let $g$ be  a function defined on
$\VV'$ satisfying \eqref{compcomp} with $S_a$ in place of $S_c$,
then  the function
\[ v(x)= \left \{\begin{array}{cc}
    g(x) & \quad\hbox{if $x \in \VV'$} \\
   \min  \{g(y) + S_a(y,x) \mid y \in \VV'\} & \quad\hbox{if $x \not\in \VV'$} \\
    \end{array} \right .\]
is the unique solution to \eqref{HJa} in $\VV\setminus\VV'$ agreeing
with $g$ on $\VV'$.
It is in addition subsolution on the whole of  $\VV$.\\
\end{Theorem}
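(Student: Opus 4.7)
The plan is to establish the three assertions of the theorem in sequence: first that $v$ is a subsolution to \eqref{HJa} on all of $\VV$, then that it solves the equation at every vertex outside $\VV'$, and finally that it is the unique such solution.

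For the subsolution property I would appeal to the characterization given by Proposition \ref{sa}, which reduces the task to proving $v(x) - v(y) \leq S_a(y,x)$ for every pair $x, y \in \VV$. This naturally splits into four cases according to whether $x$ and $y$ lie in $\VV'$ or not. When both are in $\VV'$ the inequality is exactly the admissibility of $g$; when $y \in \VV'$ and $x \notin \VV'$ it is immediate from the definition of $v$ as a minimum. The remaining two cases reduce to these by introducing a minimizer $z \in \VV'$ in the formula defining $v$ at the appropriate point and invoking the triangle inequality \eqref{triangle}.

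Next I will verify the solution property at each $x \in \VV \setminus \VV'$. Let $y_0 \in \VV'$ realize the minimum in the definition of $v(x)$, so that $v(x) = g(y_0) + S_a(y_0, x)$, and use Corollary \ref{coro} to pick a simple path $\eta = (e_1, \ldots, e_M)$ from $y_0$ to $x$ with $\si_a(\eta) = S_a(y_0, x)$. The Bellman optimality principle applied to the sub-path $(e_1, \ldots, e_{M-1})$ (or a direct inspection when $M = 1$) gives $S_a(y_0, \oo(e_M)) = \si_a(\eta) - \si_a(e_M)$, whence
\[
v(x) = g(y_0) + S_a(y_0, \oo(e_M)) + \si_a(e_M) \geq v(\oo(e_M)) + \si_a(e_M).
\]
Setting $e := -e_M \in \EE_x$, this reads $v(x) \geq v(\tt(e)) + \si_a(-e)$, and combining with the subsolution inequality established in the previous step yields the equality required in \eqref{HJa} at $x$.

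Uniqueness is a direct application of Proposition \ref{prop5.11}. I do not anticipate any serious obstacle: the only point requiring attention is ensuring that the last edge $e_M$ of the optimal simple path does not close at $x$ (which would obstruct the reduction to a shorter path), but the simplicity of $\eta$ together with $y_0 \in \VV'$ and $x \notin \VV'$ rules this out, so the remainder is bookkeeping.
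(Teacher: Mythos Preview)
Your proof is correct. The subsolution argument (four-case analysis via the triangle inequality and the admissibility of $g$) and the appeal to Proposition \ref{prop5.11} for uniqueness match the paper's proof exactly.

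For the solution property at $x \in \VV \setminus \VV'$ you take a slightly different route. You argue constructively, as in the proof of Proposition \ref{sasub}: pick an optimal simple path from a minimizer $y_0$ to $x$ and use Bellman optimality on its last edge. The paper instead recycles the comparison argument from Theorem \ref{uniquenessset}: it notes that $\psi := g(y_0) + S_a(y_0,\cdot)$ is a solution in $\VV \setminus \VV'$ (Proposition \ref{sasub}), observes that $x$ maximizes $v-\psi$, and transfers the solution equality at $x$ from $\psi$ to $v$. Both approaches rest on the same ingredients and are of comparable length; yours is more explicit but needs the small case split at $M=1$ (where the Bellman identity for the empty sub-path is vacuous and one uses directly that $\oo(e_1)=y_0\in\VV'$, so $v(\oo(e_1))=g(y_0)$), while the paper's avoids that bookkeeping at the cost of an extra indirection through $\psi$.
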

\begin{proof}
 We claim that
 \begin{equation}\label{5231}
    v(z) -v(x) \leq S_a(x,z) \qquad\hbox{for any $z$, $x$ in $\VV$.}
\end{equation}
The property is true by assumption if both $z$, $x$ are in $\VV'$,
if instead $z$, $y$ are in $\VV \setminus \VV'$ we have
\[v(z)- v(x) \leq g(y) + S_a(y,z) -  g(y) - S_a(y,x) \leq S_a(x,z),\]
where $y \in \VV'$ is optimal for $v(x)$ and we have exploited the
triangle inequality  \eqref{triangle}. If $z \not \in \VV'$, $x \in
\VV'$, then \eqref{5231} directly comes from the very definition of
$v$. Finally, if $z  \in \VV'$, $x \not\in \VV'$, we denote by $y$
an optimal element in $\VV'$ and use the triangle inequality to
write
\[v(z)- v(x) = g(z)  -  g(y) - S_a(y,x) \leq S_a(y,z) - S_a(y,x) \leq S_a(x,z). \]
This concludes the proof of claim \eqref{5231} and therefore shows,
according to Proposition \ref{sa}, that $v$ is a subsolution in
$\VV$. Taking into account that $S_a(y, \cdot)$ is solution in $\VV
\setminus \VV'$, we also get, arguing as in Theorem
\ref{uniquenessset}, that $v$ is solution in $\VV \setminus \VV'$.
Uniqueness follows from Proposition \ref{prop5.11}.
\end{proof}

\bigskip

\section{Back to the network}\label{backlocal}

In this section we switch our attention back to the network
$\Gamma$,  or in other terms, we give again visibility, besides the
vertices,  to the interior points of the arcs.  We combine the
global information gathered on the abstract graph with the outputs
of the local analysis on the arcs of the network. We define an
appropriate notion of Aubry set and provide a PDE characterization
of its points.

 Exploiting the  richer (differentiable) structure of
$\G$, we  establish, on the basis of our findings in the previous
section, some regularity properties for critical subsolutions and
solutions. This will generalize what is known  for the continuous
case in the framework of Weak KAM theory, see for example
\cite{Fathi}. Finally, we give specific uniqueness results and
representation formulae for solutions on the network.

\smallskip

\subsection{Subsolutions and solutions on $\G$} \; The next
result shows, as pointed out already in the Introduction,  how the
notion of solution to \eqref{HJ} can be recovered from the notion of
subsolution. The relevance of the issue is that the latter just requires the usual subsolution property on any arc and
continuity at the junctures.  The argument significantly
illustrates the interplay between the immersed network and underlying abstract
graph.

\smallskip

\begin{Theorem}\label{deniro} Let $a \geq c$ and  $y \in \G$, then the
maximal subsolution  to \eqref{HJ} attaining a given value at $y$ is
solution in $\G \setminus \{y\}$.
\end{Theorem}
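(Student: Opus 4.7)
My plan is to transfer the problem to the abstract graph via the correspondences of Proposition \ref{relationHJDEF} and Proposition \ref{prop5.8} and then exploit the maximality of $w$ at the discrete level. Proposition \ref{predeniro} supplies the maximal subsolution $w$ with $w(y) = \alpha$. As a preliminary reduction, if $y$ lies in the interior of an arc $\ga_0$ I would enlarge the network by declaring $y$ a new vertex and splitting $\ga_0$ into two sub-arcs meeting at $y$; this does not affect the class of subsolutions to \eqref{HJ}, and the $\si_a$ data simply splits additively along the two new edges (so cycles in the enlarged graph are in bijection with cycles in the original with the same $\si_a$-length). From now on I may assume $y \in \VV$.

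Set $u := w|_\VV$, which is a subsolution to \eqref{HJa} by Proposition \ref{relationHJDEF}(ii). The key step is to show that $u$ is in fact a solution to \eqref{HJa} at every vertex $x \neq y$. Assume for contradiction this fails at some $x \neq y$: then there is $\delta > 0$ with $u(x) + \delta \leq u(\tt(e)) + \si_a(-e)$ for all $e \in \EE_x$. Define $\tilde u(z) := u(z)$ for $z \neq x$ and $\tilde u(x) := u(x) + \delta$. An edge-by-edge check confirms $\tilde u$ is still a subsolution to \eqref{HJa}: edges disjoint from $x$ are untouched; for edges with $\oo(e) = x$ the bound only relaxes; for non-loop edges with $\tt(e) = x$ the strict-inequality hypothesis gives exactly $\tilde u(\tt(e)) - \tilde u(\oo(e)) \leq \si_a(e)$; and loops at $x$ are trivial because $\tilde u(x) - \tilde u(x) = 0 \leq \si_a(e)$ (the last inequality being inherited from $u$ itself being a subsolution across the loop). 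Extending $\tilde u$ back to $\G$ via Proposition \ref{relationHJDEF}(ii) produces a subsolution $\tilde w$ of \eqref{HJ} with $\tilde w(y) = u(y) = \alpha$ and $\tilde w(x) > w(x)$, contradicting the maximality of $w$.

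Knowing that $u$ solves \eqref{HJa} on $\VV \setminus \{y\}$, Proposition \ref{prop5.8} yields a unique extension $\hat w$ of $u$ to a function on $\G$ that is a subsolution globally and a solution on $\G \setminus \{y\}$. On each arc $\ga$, both $w \circ \ga$ and $\hat w \circ \ga$ are subsolutions to \eqref{HJg} sharing the endpoint values dictated by $u$, and $\hat w \circ \ga$ is a solution; Proposition \ref{statextra} identifies it as the unique solution with those boundary data, which is simultaneously the maximal subsolution with those boundary data, so $w \circ \ga \leq \hat w \circ \ga$ on $\ga$. The maximality of $w$ among subsolutions of \eqref{HJ} with $w(y) = \alpha$ gives the reverse inequality, and hence $w = \hat w$. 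In particular $w$ itself is a solution on $\G \setminus \{y\}$.

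The most delicate piece is the perturbation step, where one must verify that raising $u$ only at the vertex $x$ still yields a subsolution: this requires careful accounting of parallel edges and loops at $x$ and is the point where the hypothesis $a \geq c$ enters indirectly (it ensures $\si_a$ is well defined and non-negative on loops). The reduction from interior $y$ to vertex $y$ is conceptually routine but should be stated carefully, as it relies on the fact that splitting an arc at $y$ preserves both the critical value and all the relevant discrete data on the remainder of the graph.
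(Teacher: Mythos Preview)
Your argument is correct and takes a genuinely different route from the paper's. The paper treats $y\in\VV$ by identifying the trace of the maximal subsolution with (a shift of) $S_a(y,\cdot)$ via Propositions~\ref{sa}, \ref{sasub}, \ref{prop5.8}, and then handles interior $y$ by an explicit computation: it writes down the maximal admissible values $\alpha^*,\beta^*$ at the endpoints of the arc containing $y$, builds the corresponding solution on the abstract graph, and finally adjusts inside the arc. You instead absorb the interior case by splitting the arc, and for $y\in\VV$ you use a direct perturbation argument (raise $u$ at a vertex where the discrete equation is slack) rather than invoking $S_a(y,\cdot)$. Your approach is more structural and avoids the case analysis \eqref{mus1}--\eqref{mus2} and the separate treatment of closed arcs; the paper's approach is more explicit and yields concrete formulae used later (e.g., in Proposition~\ref{postdeniro}).

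Two small points of care. First, your parenthetical about a ``bijection'' of cycles is not quite right: the enlarged graph has new back--and--forth cycles through $y$ (such as $e_1$ followed by $-e_1$) with no counterpart in the original. This is harmless for your purposes, since $\si_a(e_1)+\si_a(-e_1)=\int_0^{s_0}(\si_a^+-\si_a^-)\,dt\geq 0$, so all cycles in the new graph still have nonnegative $\si_a$--length; but the claim as stated should be weakened. Second, the reduction back from the new network to the old one needs two checks you gloss over: that gluing viscosity subsolutions on $(0,s_0)$ and $(s_0,1)$ gives a subsolution on $(0,1)$ (this uses quasiconvexity via the equivalence with a.e.\ subsolutions), and that the state--constraint condition at an old vertex, when witnessed in the new network by one of the sub--arcs $\ga_1,\ga_2$, transfers to the full arc $\ga_0$ (this is immediate because the condition is local near the vertex and the sub--arc coincides with $\ga_0$ there up to reparametrization). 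Both are routine, but worth stating.
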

\begin{proof} We can assume $y \in \G \setminus \VV$ otherwise
the assertion  is a consequence of Propositions \ref{sa},
\ref{sasub} and Proposition \ref{prop5.8} with $\VV'=\{y\}$. It is
not restrictive to take $0$ as value assigned at $y$.  We therefore
denote by $v$ the maximal subsolution vanishing at $y$, see
Proposition \ref{predeniro}. We select $\ga \in \EN$ such that $y=
\ga(s_0)$ for some $s_0 \in (0,1)$, and set $e=\Psi^{-1}(\ga)$. We
first assume that $\ga$ is not a closed arc. Since $v$ must be in
particular subsolution in the arc $\ga$, we have by Corollary
\ref{fourbis}
\begin{eqnarray*}
  v(\ga(1)) &\leq& \int_{s_0}^1 \si_a^+(t)\,dt=:\be \\
  v(\ga(0)) &\leq& - \int_0^{s_0} \si_a^-(t)\,dt=: \al,
\end{eqnarray*}
where $\si_a^+$, $\si_a^-$ are defined as in \eqref{sia},
\eqref{siabis}. The maximal admissible trace $g$, in the sense of
\eqref{compcomp}, on $\VV':= \{\oo(e), \tt(e)\}$ dominated by  $
\al$ at $\oo(e)=\ga(0)$, and $\be$ at $\tt(e)=\ga(1)$   is
\begin{eqnarray*}
  \al^* &:=& \min\{\al, \; \be+ S_a(\tt(e),\oo(e)) \}\\
  \be^* &:=& \min\{\al, \; \be+ S_a(\oo(e),\tt(e))\}.
\end{eqnarray*}
 According to Proposition \ref{sa}, Theorem \ref{prop5.23} and Corollary \ref{corunique}, the
function $w: \VV \to\R$ defined as
\[ w(x)= \left \{
\begin{array}{cc}
   \al^* & \quad\hbox{if $x =\oo(e)$} \\
   \be^* & \quad\hbox{if  $x =\tt(e)$} \\
   \min \{\al^* + S_a(\oo(e),x) , \, \be^* + S_a(\tt(e),x) \} & \quad\hbox{if $x \neq \oo(e)$ and $x \neq \tt(e)$} \\
    \end{array}
 \right .\]
is the maximal subsolution  to \eqref{HJa} on $\VV$ agreeing with
$\al^*$, $\be^*$ at the vertices of $e$.   It  is in addition
solution in $\VV \setminus \{\ga(0),\ga(1)\}$.  By Proposition
\ref{prop5.8} it can thus be extended to a subsolution of \eqref{HJ}
in $\G$, denoted by $\ov w$, which is in addition solution  in $\G
\setminus \{\ga(0),\ga(1)\}$. The function $\ov w$ is the maximal
subsolution to \eqref{HJ} taking the values $\al^*$, $\be^*$ on the
vertices of $\ga$,  but it does not necessarily  vanish at $y$. We
have in any case
\begin{equation}\label{mus0}
  v \leq \ov w \qquad\hbox{in $\G $.}
\end{equation}
To complete the proof, we need  to suitably adjust $\ov w$ inside
$\ga$ in order to attain the value $0$ at $y$. To this end,  we
proceed by showing that the boundary data $\al^*$, $0$ and $0$, $\be^*$
are admissible, in the sense of \eqref{compa}, for \eqref{HJg}
restricted to the subintervals $[0,s_0]$ and $[s_0,1]$,
respectively. In fact,
\begin{equation}\label{mus1}
   \al^* \leq  \al= - \int_0^{s_0} \si_a^-(t)\,dt,
\end{equation}
and if a strict inequality prevails in the above formula, we get
\begin{equation}\label{mus1bis}
   \al^*=   \int_{s_0}^1 \si_a^+(t)\,dt   + S_a(\tt(e),\oo(e)).
\end{equation}
Let  us consider a cycle in $\XX$ of the form $\xi \cup e$, where
$\xi$ be a path in  linking $\tt(e)$ to $\oo(e)$ with $\si_a(\xi)=
S_a(\tt(e),\oo(e))$,  see  Corollary \ref{coro}. Then $\si_a(\xi
\cup e) \geq 0$ and consequently $S_a(\tt(e), \oo(e)) \geq -
\si_a(e)$. By plugging this relation in \eqref{mus1bis} and
recalling the definition of $\si_a(e)$, we get
\begin{equation}\label{mus2}
\al^* \geq    \int_{s_0}^1 \si_a^+(t)\,dt - \int_0^1 \si_a^+(t)\,dt
= -\int_0^{s_0} \si_a^+(t)\,dt.
\end{equation}
By combining \eqref{mus1}, \eqref{mus2}  we  have
\[   \int_0^{s_0} \si_a^-(t)\,dt \leq  -\al^* \leq \int_0^{s_0}
\si_a^+(t)\,dt,\] proving the claimed admissibility property in
$[0,s_0]$. A straightforward modification of the previous argument
shows the same in $[s_0,1]$.  Thus, there exists  a function $u$ on
$\ga([0,1])$ uniquely determined by requiring $u \circ \ga$ to be
solution to \eqref{HJg} in $(0,s_0)$ and $(s_0,1)$, and in addition
to take the values $\al^*$, $0$, $\be^*$ at $\ga(0)$, $y$, $\ga(1)$,
respectively. This is also the maximal subsolution of \eqref{HJg} in
$(0,1)$  taking such values at the boundary points and at $s=s_0$.
The function
\[\ov{\ov w}(x)= \left \{
\begin{array}{cc}
   \ov w & \quad\hbox{in $\G \setminus \ga[0,1]$} \\
   u & \quad\hbox{in $\ga[0,1]$} \\
    \end{array}
 \right .\]
 is subsolution  to \eqref{HJ} in $\G$ and by the maximality property of
 $u$ on $\ga$ and \eqref{mus0}
\[ v \leq \ov{\ov w} \qquad\hbox{in $\G$,}\]
 which immediately implies $v=\ov{\ov w}$.

 The function $v$  is
 by construction solution to \eqref{HJ} in $\G \setminus
 \{\ga(0),y,\ga(1)\}$. Moreover, taking into account Remark \ref{four} and Proposition
\ref{state} applied to the subinterval $[0,s_0]$, we see that if
$\ov w(\ga(0))= \al$ then  $\ov w$ satisfies condition iii) in
definition of solution to  \eqref{HJ} at $\ga(0)$ with respect to
the arc $\widetilde \ga$. If instead $\ov w(\oo(e)) = \al +
S_a(\tt(e),\oo(e))$ then again condition iii) of definition of
solution is satisfied with respect to some arc different from $\ga$,
$\widetilde \ga$ because of Propositions \ref{sasub} and
\ref{prop5.8}. Similarly, we prove that $v$ is  solution at
$\ga(1)$. This concludes the proof if $\ga$ is not a closed arc.

If instead $\ga$ is a closed arc, then we indicate by $w$ the
maximal periodic subsolution of \eqref{HJg} in $(0,1)$ vanishing at
$s=s_0$, see Corollary \ref{fourtris}. Arguing as in the first part
of the proof, we see that the maximal subsolution $v$  to \eqref{HJ}
in $\G$ vanishing at $y$ is given by
\[ v(x)= \left \{
\begin{array}{cc}
   w(\ga^{-1}(x)) & \quad\hbox{in $\ga([0,1])$} \\
   w(\ga(0)) + S_a(\ga(0),x) & \quad\hbox{in  $\G \setminus \ga([0,1])$}. \\
    \end{array}
 \right .\]
 Taking into account the representation formulae for $w$ provided in
 item ii) of Corollary \ref{fourtris} and arguing again as in the
 first part of the proof, we show that $v$ is solution to \eqref{HJ}
 in $\G \setminus \{y\}$, as it was claimed.

\end{proof}

\medskip

\subsection{ Aubry set in $\G$}

We define the Aubry set $\A_\Gamma$ on the network as
\begin{equation}\label{Aubrynetwork}
\A_\Gamma:= \big \{ x \in \R^N \mid x= \Psi(e)(t) \;\hbox{for some
$e \in \A_\XX^*$, $t \in [0,1]$} \big \}.\\
\end{equation}

\medskip

One could also consider a lift of $\A_\Gamma$ to the tangent bundle
$T \Gamma$, as in continuous case. {For example, this could  be
useful to study the analogues in this setting of  Mather's
measures, Mather sets, minimal average actions, etc. (see for
example \cite{Fathi, Sorrentinobook} for precise definitions); this
discussion, however, would go beyond our current objectives, so we
decided to postpone it  to a future investigation.\\

\begin{Remark}\label{regolo}
We point out for later use that the support of an arc $\ga$ belongs
to $\A_\Gamma$ if and only if $\ga=\Psi(e)$ and
at least one between  $e$ or $-e$ is in $\A_\XX^*$.\\
\end{Remark}

\smallskip

The first lemma regards  subsolutions to the critical equation on
$\XX$. Briefly, it says that -- analogously to what happens in the
continuous case, see \cite{Fathi} --  the differential of a critical
subsolution is prescribed on the Aubry set and that  critical
subsolutions are never strict on the Aubry set. On the other hand,
it is always possible to find critical subsolutions that are strict
outside
 the Aubry set. This will be used in the next subsection to obtain the same results on networks. See
Theorems \ref{regolauno}, \ref{regolaregola}. \\

\begin{Lemma}\label{preregola} Given a subsolution $u$ to
$(\mathcal{D}\mathit{FEc})$, one has
\begin{equation}\label{preregola00}
    \langle \d u, e \rangle = \si_a(e)  \qquad\hbox{for any $e \in
    \A_\XX^*$.}
\end{equation}
Furthermore, there exists  a subsolution $w$ to
$(\mathcal{D}\mathit{FEc})$ with
\begin{equation}\label{preregola000}
   \langle \d w, e \rangle < \si_a(e)   \qquad\hbox{for any  $e \in \EE \setminus
 \A_\XX^*$.}
\end{equation}
\end{Lemma}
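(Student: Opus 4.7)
The two statements are essentially independent: (\ref{preregola00}) is a rigidity property forced by the existence of zero cycles, while (\ref{preregola000}) is a construction obtained by combining distance-type critical subsolutions in a suitable way.

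For (\ref{preregola00}) the idea is a straightforward telescoping along a zero cycle. Fix $e\in \A_\XX^*$ and choose, by the very definition of the Aubry set, a cycle $\xi=(e_i)_{i=1}^M$ passing through $e$ with $\si_c(\xi)=0$. Summing the subsolution inequalities $\langle \d u,e_i\rangle\leq\si_c(e_i)$ over $i$ yields
\[
0\;=\;\sum_{i=1}^M \langle \d u,e_i\rangle\;\leq\;\sum_{i=1}^M\si_c(e_i)\;=\;0,
\]
because the differentials telescope and the cycle has coincident endpoints. Hence each term must saturate its bound; in particular $\langle \d u,e\rangle=\si_c(e)$.

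For (\ref{preregola000}) the plan is to exhibit, for each $e\in \EE\setminus \A_\XX^*$, one critical subsolution strict at $e$, and then to average the finitely many resulting functions. The natural building blocks are the forward distances $u_y:=S_c(y,\cdot)$ (critical subsolutions by Proposition \ref{sasub}) and the backward distances $v_y:=-S_c(\cdot,y)$ (critical subsolutions by Corollary \ref{sasubbis}); which one to pick is dictated by whether $\oo(e)$ lies in the projected Aubry set. If $\oo(e)\notin \A_\XX$, Lemma \ref{Aubrynonempty} gives $S_c(\oo(e),\oo(e))>0$ and therefore
\[
\langle \d u_{\oo(e)},e\rangle\;=\;S_c(\oo(e),\tt(e))-S_c(\oo(e),\oo(e))\;\leq\;\si_c(e)-S_c(\oo(e),\oo(e))\;<\;\si_c(e),
\]
the first estimate being the triangle inequality \eqref{triangle} applied to the path consisting of the single edge $e$. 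If instead $\oo(e)\in \A_\XX$, then $S_c(\oo(e),\oo(e))=0$ and I would use $v_{\oo(e)}$, for which $\langle \d v_{\oo(e)},e\rangle=-S_c(\tt(e),\oo(e))$; the cycle at $\oo(e)$ obtained by concatenating $e$ with a simple geodesic path from $\tt(e)$ to $\oo(e)$ (provided by Corollary \ref{coro}) has $\si_c$-mass $\si_c(e)+S_c(\tt(e),\oo(e))\geq 0$ in view of Lemma \ref{lemmotto}, and an equality would place $e$ inside a zero cycle, contradicting $e\notin \A_\XX^*$; hence $\si_c(e)+S_c(\tt(e),\oo(e))>0$, which is exactly the required strict inequality for $v_{\oo(e)}$.

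Denote by $w_e$ the critical subsolution produced above for each $e\in \EE\setminus \A_\XX^*$; the desired $w$ is then their average
\[
w:=\frac{1}{|\EE\setminus \A_\XX^*|}\sum_{e\in \EE\setminus \A_\XX^*}w_e.
\]
The convexity of the subsolution condition \eqref{subsolbis} makes $w$ a critical subsolution, and for a fixed $e\notin \A_\XX^*$ the summand $w_e$ contributes strictly less than $\si_c(e)$ while every other summand contributes at most $\si_c(e)$, giving $\langle \d w,e\rangle<\si_c(e)$. The main obstacle here is the case analysis in the previous paragraph: the fact that vertices outside $\A_\XX$ are handled via the positivity of $S_c(\cdot,\cdot)$ on the diagonal, whereas vertices inside $\A_\XX$ require the ``backward'' distance $-S_c(\cdot,\oo(e))$ together with the zero-cycle characterization of $\A_\XX^*$, is what forces one to enrich the usual pool of critical subsolutions with both orientations of $S_c$.
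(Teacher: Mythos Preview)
Your proof is correct. The first part (the rigidity statement \eqref{preregola00}) coincides verbatim with the paper's argument: telescope the subsolution inequalities along a zero cycle and force each inequality to be an equality.

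For the second part, both you and the paper proceed by producing, for each $e\in\EE\setminus\A_\XX^*$, one critical subsolution strict at $e$, and then taking a convex combination indexed by such edges. The difference lies in the case analysis. The paper branches on whether $\tt(e)\in\A_\XX$: if not, $S_c(\tt(e),\cdot)$ fails to be a solution at $\tt(e)$ and is therefore strict at $e$; if yes, the paper considers \emph{both} $S_c(\tt(e),\cdot)$ and $-S_c(\cdot,\tt(e))$, argues that if neither is strict then $\oo(e)$ and $\tt(e)$ lie in the same static class, and then handles this residual sub-case by invoking part one of the lemma along a path of Aubry edges to manufacture a zero cycle through $e$. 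You branch instead on $\oo(e)$: if $\oo(e)\notin\A_\XX$ you use the positivity of $S_c(\oo(e),\oo(e))$ to make $S_c(\oo(e),\cdot)$ strict at $e$; if $\oo(e)\in\A_\XX$ you go straight to $-S_c(\cdot,\oo(e))$ and observe that concatenating $e$ with a geodesic path from $\tt(e)$ back to $\oo(e)$ yields a cycle whose $\si_c$-mass is $\si_c(e)+S_c(\tt(e),\oo(e))$, which cannot vanish without placing $e$ in $\A_\XX^*$. Your route bypasses the static-class detour and the appeal to part one, at the price of invoking Corollary~\ref{coro} to realize $S_c(\tt(e),\oo(e))$ by an actual path (a point the paper also uses, but only in its final sub-case). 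Both arguments are sound; yours is somewhat more economical.
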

\begin{proof}
Let  $u$ be    a critical subsolution and assume for purposes of
contradiction that
\[  \langle \d u, \ov e \rangle < \si_a(\ov e) \qquad\hbox{for some $\ov e\in \A_\XX^*$.}\]
By  the very definition of Aubry set, we can find   a cycle
$\xi=(e_i)_{i=1}^M$ such that $\ov e=e_j$  for some $j=1,\cdots, M$
and $\si_c(\xi)=0$. Taking into account that $u$ is a subsolution,
we   have
\[ \langle \d u, e_i \rangle \leq \si_a(e_i)  \;\;\; \hbox{for $i \neq j$ \; and  } \;\;    \langle \d u, e_j \rangle <
\si_a(e_j).\] This implies
\[ 0 = \sum_i \langle \d u, e_i \rangle < \sum_i \si_c(e_i)=
\si_c(\xi)=0,\] which is impossible. We pass to the second part of
the statement. We start constructing for any $e_0  \in \EE \setminus
\A_\XX^*$ a critical subsolution $u_{e_0}$ with
\begin{equation}\label{preregola1}
   \langle\ d u_{e_0},e_0 \rangle < \si_a(e_0).
\end{equation}
The argument will be organized taking into account the
classification of edges in $\A^*_\XX$ provided in Remark
\ref{classau}. If $\tt(e_0) \not\in \A_\XX$, then we set $u_{e_0}=
S_c(\tt(e_0),\cdot)$, according to Lemma \ref{Aubrynonempty},
$u_{e_0}$ is not a critical solution at $\tt(e_0)$ which implies
\eqref{preregola1}. If $\tt(e_0) \in \A_\XX$, we consider the
critical subsolutions $S_c(\tt(e_0),\cdot)$ and $- S_c(\cdot,
\tt(e_0)$, see Proposition \ref{sasub} and Corollary \ref{sasubbis}.
Taking into account the characterization of $\A_\XX$ given in  Lemma
\ref{Aubrynonempty}, we have
\begin{eqnarray*}
-S_c(\tt(e_0),\oo(e_0)) =  S_c(\tt(e_0),\tt(e_0))- S_c(\tt(e_0),\oo(e_0)) &\leq& \si_c(e_0) \\
 S_c(\oo(e_0),\tt(e_0)) = -S_c(\tt(e_0),\tt(e_0))+ S_c(\oo(e_0),\tt(e_0)) &\leq&
 \si_c(e_0).
\end{eqnarray*}
If equality prevails in both above formulae, we get
\[ S_c(\oo(e_0),\tt(e_0))+ S_c(\tt(e_0),\oo(e_0))=0\]
which is possible if and only if  both $\oo(e_0)$, $\tt(e_0)$ are in
the Aubry set and belong to the same static class.  If this is not
the case,  we satisfy \eqref{preregola1} up to choosing $u_{e_0}$
equals to $S_c(\tt(e_0),\cdot)$ or $- S_c(\cdot, \tt(e_0))$. If
instead the two vertices are  in the same static class, we claim
that
\begin{equation}\label{preregola3}
  S_c(\tt(e_0),\tt(e_0)) - S_c(\tt(e_0),\oo(e_0)) =
- S_c(\tt(e_0),\oo(e_0)) < \si_c(e_0).
\end{equation}
In fact, we know, by the very definition of static class, that there
is a path $\xi$ linking $\tt(e_0)$ to $\oo(e_0)$ with all the edges
belonging to $\A_\XX^*$. Therefore,  using  Lemma
\ref{Aubrynonempty} and the first part of the statement that we have
just proven, applied to the critical subsolution
$-S_c(\cdot,\oo(e_0))$, we have that
\[  S_c(\tt(e_0), \oo(e_0)) = - S_c(\oo(e_0), \oo(e_0))+ S_c(\tt(e_0),
\oo(e_0))= \si_c(\xi).\]
Were \eqref{preregola3} false, we should further have
\[0 = - S_c(\tt(e_0), \oo(e_0)) + S_c(\tt(e_0), \oo(e_0))=
\si_c(\xi\cup e_0)\] and consequently $e_0\in \A_\XX^*$, which is
impossible. Formula \eqref{preregola1} is therefore satisfied with
$u_{e_0} = S_c(\tt(e_0),\cdot)$.  This completes the proof of
\eqref{preregola1}.

We conclude arguing along the same lines of Theorem \ref{critico}.
Given $e \in \EE \setminus \A_\XX^*$, we denote by $u_e$ a critical
subsolution satisfying \eqref{preregola1} with $e$ in place of
$e_0$.  We choose positive constants $\la_e$, for $e \in \EE
\setminus \A_\XX^*$, summing to $1$, and  define a critical
subsolution via
\[w = \sum_{e \in \EE \setminus \A_\XX^*} \la_e \,u_e.\]
Given $e_0\in \EE \setminus \A_\XX^*$, we have
\[\langle \d w, e_0 \rangle = \sum_{e \neq e_0} \la_e \, \langle \d
u_e,e_0 \rangle + \la_{e_0} \, \langle \d u_{e_0},e_0 \rangle <
\sigma_c(e_0),\] as we wished to prove.
\end{proof}

We derive a PDE characterization of points in the Aubry set,
generalizing a property of the continuous case.

\smallskip

\begin{Proposition}\label{postdeniro}  The maximal subsolution to $(\mathcal{H}J\mathit{c})$ taking a given value at a point $y \in \G$  is   a
critical solution on the whole network if and only if $y \in \A_\G$.
\end{Proposition}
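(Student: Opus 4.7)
The plan is to split according to whether $y\in\VV$ or $y$ lies in the interior of an arc, and to exploit Theorem~\ref{deniro}, which already gives the solution property on $\G\setminus\{y\}$: only the status at $y$ itself remains in question. Without loss of generality I take the prescribed value at $y$ to be $0$ and call $v$ the maximal subsolution of $(\mathcal{H}J\mathit{c})$ vanishing at $y$.

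\emph{Sufficiency} ($y\in\A_\G\Rightarrow v$ is a critical solution on $\G$). If $y\in\VV\cap\A_\G$, then $y\in\A_\XX$ and Lemma~\ref{Aubrynonempty} gives $S_c(y,y)=0$, so $S_c(y,\cdot)$ is a critical solution of $(\mathcal{D}\mathit{FEc})$ vanishing at $y$; Proposition~\ref{relationHJDEF} lifts it to a critical solution on $\G$, which coincides with $v$ in view of Proposition~\ref{sa} on $\VV$ and the arc-wise maximality of Proposition~\ref{statextra}. If $y=\ga(s_0)$ is interior and $e:=\Psi^{-1}(\ga)\in\A_\XX^*$ (the case $-e\in\A_\XX^*$ being symmetric via Proposition~\ref{inverti}), a zero-$\si_c$ cycle through $e$ forces the sharp identities $S_c(\oo(e),\tt(e))=\si_c(e)=\int_0^1\si_c^+(t)\,dt$ and $S_c(\tt(e),\oo(e))=-\si_c(e)$. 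Substituting into the formulas for $\al^*,\be^*$ from the proof of Theorem~\ref{deniro} yields $\al^*=-\int_0^{s_0}\si_c^+$ and $\be^*=\int_{s_0}^1\si_c^+$, so the representation \eqref{compa00} collapses on both subintervals to $v(\ga(s))=\int_{s_0}^s\si_c^+(t)\,dt$ on $[0,1]$. This is $C^1$ at $s_0$ with slope $\si_c^+(s_0)$, and $H_\ga(s_0,\si_c^+(s_0))=c$, so $v$ is a viscosity solution at $y$ as well.

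\emph{Necessity} (contrapositive). Assume $y\notin\A_\G$. If $y\in\VV$, then $S_c(y,y)>0$ by Lemma~\ref{Aubrynonempty} and, by Proposition~\ref{sa}, the maximal subsolution to $(\mathcal{D}\mathit{FEc})$ with value $0$ at $y$ is $v_0(x)=S_c(y,x)$ for $x\neq y$, $v_0(y)=0$. Proposition~\ref{sasub} applied to $S_c(y,\cdot)$ then gives
\[
\min_{e\in\EE_y}\bigl(v_0(\tt(e))+\si_c(-e)\bigr)=\min_{e\in\EE_y}\bigl(S_c(y,\tt(e))+\si_c(-e)\bigr)=S_c(y,y)>0=v_0(y),
\]
so the discrete solution condition fails at $y$ and, by Proposition~\ref{relationHJDEF}, so does the PDE condition. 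If $y=\ga(s_0)$ is interior and $e,-e\notin\A_\XX^*$, the first paragraph of the proof of Theorem~\ref{critico} excludes $c=a_\ga$ (and, for closed $\ga$, $c=c_\ga$), so $c>a_\ga$ and hence $\si_c^-(s_0)<\si_c^+(s_0)$ strictly. The cycle estimates for $S_c(\oo(e),\tt(e))$ and $S_c(\tt(e),\oo(e))$ become strict, yielding $\al^*>-\int_0^{s_0}\si_c^+$ and $\be^*>\int_{s_0}^1\si_c^-$; inspection of \eqref{compa00} then shows that near $s_0$ the function $v\circ\ga$ agrees with $s\mapsto-\int_s^{s_0}\si_c^-(t)\,dt$ on the left and with $s\mapsto\int_{s_0}^s\si_c^+(t)\,dt$ on the right, with left and right derivatives $\si_c^-(s_0)<\si_c^+(s_0)$ at $s_0$. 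A $C^1$ subtangent $\varphi$ with $\varphi'(s_0)$ chosen strictly between these values then gives $H_\ga(s_0,\varphi'(s_0))<c$ by assumption \textbf{(H$\ga$3)}, violating the supersolution inequality and so the solution condition at $y$.

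The main difficulty lies in the interior case: one must translate the combinatorial condition $e,-e\in\A_\XX^*$ (or its negation) into the exact tight or strict form of the cycle inequalities for $S_c(\oo(e),\tt(e))$ and $S_c(\tt(e),\oo(e))$, identify which branch of the min in \eqref{compa00} is active in a one-sided neighbourhood of $s_0$, and read off the one-sided derivatives of $v\circ\ga$ at $s_0$ to verify (or contradict) the viscosity tests. The case of a closed arc is handled in the same way, with Corollary~\ref{fourtris} replacing \eqref{compa00}.
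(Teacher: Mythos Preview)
Your argument is correct and follows essentially the same route as the paper: handle $y\in\VV$ via Lemma~\ref{Aubrynonempty}, and for $y=\ga(s_0)$ interior, compute $v(\ga(0))=\al^*$, $v(\ga(1))=\be^*$ from the proof of Theorem~\ref{deniro} and translate membership of $e$ or $-e$ in $\A_\XX^*$ into the tight or strict form of the cycle inequalities $S_c(\tt(e),\oo(e))\geq -\si_c(e)$, $S_c(\oo(e),\tt(e))\geq -\si_c(-e)$. The only cosmetic difference is that the paper packages the test at $s_0$ as the single criterion~\eqref{postdeniro1} (the Dirichlet solution on the whole interval $(0,1)$ with data $\al^*,\be^*$ vanishes at $s_0$), whereas you compute the one-sided derivatives of $v\circ\ga$ at $s_0$ directly and run the viscosity test by hand; these are equivalent readings of~\eqref{compa00}. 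One small point to tighten: in the vertex necessity case your equality $\min_{e\in\EE_y}\bigl(v_0(\tt(e))+\si_c(-e)\bigr)=\min_{e\in\EE_y}\bigl(S_c(y,\tt(e))+\si_c(-e)\bigr)$ is not literally true if $\EE_y$ contains loops (since then $v_0(\tt(e))=0\neq S_c(y,y)$), but the conclusion survives because $\si_c(-e)>0$ for any loop at $y\notin\A_\XX$.
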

\begin{proof} If $y \in \VV$, the assertion comes from Lemma \ref{Aubrynonempty}, we can then assume from now on that $y \in \G \setminus \VV$.
 We prescribe, without loss of generality,  the value $0$ at $y$, and  denote by $v$
the maximal subsolution vanishing at $y$, see  Proposition
\ref{predeniro}. We denote by $\ga$ an arc whose support contains
$y$.  \\
We first assume that $\ga$ is not a closed curve.
Taking into account Theorem  \ref{deniro}, it is enough to show that
$v$  is solution at $y$ if and only if $y \in \A_\G$. Looking at the
proof of Theorem  \ref{deniro}, we see that the solution property at
$y$ is in turn equivalent to the following:  the solution of
$(HJ_\ga \mathit{c})$
 in $(0,1)$ taking the values  $v(\ga(0))$, $v(\ga(1))$
 at $0$, $1$,  respectively, vanishes at $s=s_0$.
In the light of Proposition \ref{statextra}, this  boils down to
show
\begin{equation}\label{postdeniro1}
  \min  \{ v(\ga(0)) + {\mathbf A}, \, v(\ga(1)) - {\mathbf B}
 \} =0,
\end{equation}
 where $\si_c^+$, $\si^-_c$ are defined as in \eqref{sia},
\eqref{siabis}, respectively, and
\[  {\mathbf A}  = \int_0^{s_0} \si_c^+(t)\,dt \qquad\qquad {\mathbf B}  = \int_{s_0}^1 \si_c^-(t)\,dt.  \]
Taking into account the proof of Theorem \ref{deniro}, we know that
\begin{eqnarray}
  v(\ga(0)) &=&  \min \{ - {\mathbf D },\;  {\mathbf C} +S_c(\ga(1), \ga(0))  \} \label{postdeniro00}\\
  v(\ga(1)) &=& \min  \{  {\mathbf C} ,\; - {\mathbf D} + S_c(\ga(0), \ga(1)) \} \label{postdeniro0}
\end{eqnarray}
where
\[{\mathbf C}  =\int_{s_0}^1 \si_c^+(t)\,dt \qquad\qquad  {\mathbf D}  = \int_0^{s_0} \si_c^-(t)\,dt.  \]
Then
\begin{equation}\label{postdeniro2}
{\footnotesize
  v(\ga(0))  + {\mathbf A } =  \left \{\begin{array}{ll}
   \int_0^{s_0} [ \si_c^+(t) -\si_c^-(t)] \, dt &\quad\hbox{if $v(\ga(0))= - {\mathbf D }$}\\
   & \\
      \int_0^1 \si^+_c(t) \,dt  +S_c(\ga(1), \ga(0))  & \quad\hbox{if $v(\ga(0))= {\mathbf C} +S_c(\ga(1), \ga(0))$} \\
   \end{array} \right.}
\end{equation}
and
\begin{equation}\label{postdeniro3}
{\footnotesize
    v(\ga(1)) - {\mathbf B} =  \left \{\begin{array}{ll}
   \int_{s_0}^1 [ \si_c^+(t) -\si_c^-(t)] \,dt &\quad\hbox{if $v(\ga(1))=  {\mathbf C }$}\\
   & \\
    -\int_0^1 \si^-_c(t) \,dt  +S_c(\ga(0), \ga(1))  & \quad\hbox{if $v(\ga(1))= -{\mathbf D} +S_c(\ga(0), \ga(1))$}. \\
   \end{array} \right.}
\end{equation}
 Exploiting the property that $\si_c(\xi) \geq 0$ for
any cycle $\xi$ in $\XX$,   we see that
\begin{eqnarray*}
  S_c(\ga(0),\ga(1)) &\geq& -\si_c(-e)= \int_0^1 \si^-_c(t) \,dt  \\
  S_c(\ga(1),\ga(0)) &\geq& -\si_c(e)= -\int_0^1 \si^+_c(t) \,dt.
\end{eqnarray*}
Equality holds in the first formula if and only if there is a cycle
$\xi$ with $-e \subset \xi$,  $\si_c(\xi)=0$, and in the second one
if and only if if there a cycle $\eta$ with $e \subset \xi$,
$\si_c(\eta)=0$. We in addition  have that
\[ \int_0^{s_0} [ \si_c^+(t) -\si_c^-(t)] \, dt =0 \quad\hbox{or}\quad  \int_{s_0}^1 [ \si_c^+(t) -\si_c^-(t)] \, dt=0\]
if and only if $c=a_\ga$,  and   this case both $e$
and $-e$ belong to $\A^*_\XX$.  In the light of the above remarks,
\eqref{postdeniro2},  \eqref{postdeniro3}, we conclude
that \eqref{postdeniro1} holds if and only if $y \in \A_\G$.

 This concludes the proof when $\ga$ is
not a closed arc. The argument for $\ga$ closed arc goes along the
same lines just adapting the representation formulae for solutions
of $(HJ_\ga \mathit{c})$ and taking into account  Corollary
\ref{fourtris}.
\end{proof}

\medskip

\subsection{Regularity results for critical subsolutions}

We state and prove the main regularity results of this section.
They can be considered as a generalization to the network setting of the results in \cite{FathiSiconolfiC1}.\\

\begin{Theorem}\label{regolauno}  Any critical subsolution $u:\Gamma \to \R$ is of
class $C^1$ in $\A_\Gamma \setminus \VV$, and all such subsolutions
possess the same differential in $\A_\Gamma \setminus \VV$.
\end{Theorem}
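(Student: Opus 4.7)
The plan is to reduce the statement to a one-dimensional calculation on each arc whose support lies in $\A_\Gamma$, then exploit the saturation identity of Lemma \ref{preregola} together with the a.e. characterization of viscosity subsolutions to pin down the derivative along that arc.

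Fix $x \in \A_\Gamma \setminus \VV$. By the definition of $\A_\Gamma$ and Remark \ref{regolo}, there exist an arc $\ga \in \EN$ and $s_0 \in (0,1)$ with $x = \ga(s_0)$ such that, setting $e := \Psi^{-1}(\ga)$, either $e \in \A_\XX^*$ or $-e \in \A_\XX^*$. By Proposition \ref{inverti}, replacing $\ga$ by $\widetilde{\ga}$ if necessary (an operation under which $D_\Gamma u(x)$ is invariantly defined), I may assume $e \in \A_\XX^*$. Let $u$ be any critical subsolution and set $v(s) := u(\ga(s))$. From Definition \ref{defsolsubsol}, $v$ is a viscosity subsolution of $(HJ_\ga c)$ on $(0,1)$; by \textbf{(H$\ga$2)} and the discussion preceding Proposition \ref{equnique}, $v$ is Lipschitz on $[0,1]$ and an a.e. subsolution, so by \eqref{siasia1}
\[
\si_c^-(s) \,\leq\, v'(s) \,\leq\, \si_c^+(s) \qquad \text{for a.e.\ } s \in (0,1).
\]

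Next, the trace of $u$ on $\VV$ is a critical subsolution of $(\mathcal{D}\mathit{FEc})$ by Proposition \ref{relationHJDEF}, so Lemma \ref{preregola} applied to $e \in \A_\XX^*$ yields
\[
v(1) - v(0) \,=\, u(\tt(e)) - u(\oo(e)) \,=\, \langle du, e\rangle \,=\, \si_c(e) \,=\, \int_0^1 \si_c^+(t)\,dt,
\]
using the definition \eqref{sigra1}. Combining this with the pointwise bound $v'(s) \leq \si_c^+(s)$ shows that the nonnegative integrand $\si_c^+(s) - v'(s)$ has vanishing integral on $[0,1]$, so $v'(s) = \si_c^+(s)$ a.e.

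Since $\si_c^+$ is continuous on $[0,1]$ by Proposition \ref{siac1}, and $v$ is absolutely continuous, the fundamental theorem of calculus gives
\[
v(s) \,=\, v(0) + \int_0^s \si_c^+(t)\,dt \qquad \text{for every } s \in [0,1],
\]
which makes $v$ of class $C^1$ on $[0,1]$, with $v'(s_0) = \si_c^+(s_0)$. The covector $D_\Gamma u(x) \in T^*_\Gamma(x)$ is then determined by the intrinsic relation $(D_\Gamma u(x), \dot\ga(s_0)) = v'(s_0) = \si_c^+(s_0)$, and this value depends only on $\ga$ and $s_0$, not on the particular critical subsolution $u$. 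Therefore $u \in C^1$ at $x$ and $D_\Gamma u(x)$ is the same for every critical subsolution.

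No serious obstacle remains once the machinery is in place: Lemma \ref{preregola} does the essential global work (producing the saturation $\langle du, e\rangle = \si_c(e)$ on $\A_\XX^*$), while the regularity and uniqueness of the derivative on arcs of $\A_\Gamma$ follow from the elementary one-dimensional squeeze $\si_c^- \leq v' \leq \si_c^+$ together with continuity of $\si_c^+$. The only point that requires a little care is the reduction via Proposition \ref{inverti} when only $-e$, and not $e$, lies in $\A_\XX^*$; invariance of $D_\Gamma u$ under change of parametrization handles this.
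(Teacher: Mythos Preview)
Your proof is correct and follows essentially the same approach as the paper: both arguments hinge on Lemma \ref{preregola} to obtain the saturation $u(\ga(1))-u(\ga(0))=\si_c(e)$ for $e\in\A_\XX^*$, and then deduce the explicit formula $u(\ga(s))=u(\ga(0))+\int_0^s\si_c^+(t)\,dt$. The only minor difference is that the paper invokes the maximal-subsolution characterization (Remark \ref{four}/Proposition \ref{state}) to reach this formula, whereas you use the direct squeeze argument $\int_0^1(\si_c^+-v')=0$ with $\si_c^+-v'\geq 0$ a.e.; your handling of the case where only $-e\in\A_\XX^*$ via Proposition \ref{inverti} is also a bit more explicit than the paper's.
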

\begin{proof} Let $u$ be a critical subsolution on $\Gamma$ and
$\ga=\Psi(e)$ an arc with $e \in \A_\XX^*$. According to Lemma
\ref{preregola}, formula \eqref{preregola00}
\[u(\ga(1))- u(\ga(0)) = \si_c(e),\]
therefore $u \circ \ga$ is the maximal subsolution taking the value
$u(\ga(0))$ at $s=0$ and, according to Proposition \ref{state}, has
the form
\[u(\ga(s))=  \int_0^s \si_c^+(t)\,dt,\]
where $\si_c^+$ is as in \eqref{sia} with $H_\ga$ in place of $H$
and $c$ in place of $a$.  We deduce that $s \mapsto u(\ga(s))$  is
of class $C^1$ for $t \in (0,1)$ and for any $x = \ga(t_0)$, with
$t_0\in (0,1)$,  the differential $D_\G u(x)$ is uniquely determined
among the elements of $T^*_\Gamma(x)$ by the condition
\[ (D_\G u(x),  \dot\ga(t_0)) = \frac d{dt} u(\ga(t))\big |_{t=t_0}=\si_c^+(t_0).\]
This concludes the proof.
\end{proof}

\medskip

Moreover:

\begin{Theorem}\label{regolaregola} For any  critical subsolution $w$ on $\XX$, there exists
 a critical subsolution $u$ on $\Gamma$, with $w=u$ on $\VV$,  which is of class $C^1$
in $\G \setminus \VV$.  There exists  in addition a critical
subsolution $v$ on $\Gamma$ of class $C^1(\G \setminus \VV)$
satisfying
\[ \HN(x, D_\G v(x)) < c \qquad\hbox{ for $x \in \G \setminus (\A_\G
\cup \VV)$}.\]
\end{Theorem}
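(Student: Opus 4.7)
\textbf{First statement.} Given a critical subsolution $w$ of $(\mathcal{D}\mathit{FEc})$ on $\XX$, the plan is to construct the extension $u$ arc by arc, using a convex combination of the extremal subsolutions described in Remark \ref{four}. Fix $\ga=\Psi(e)$ and write $\al:=w(\oo(e))$, $\be:=w(\tt(e))$. Since $w$ is a discrete subsolution, $-\si_c(-e)\le\be-\al\le\si_c(e)$, so the admissibility condition \eqref{compa} holds; hence there exists $\la\in[0,1]$ with
\[
\la\,\si_c(e)+(1-\la)\,(-\si_c(-e))=\be-\al.
\]
Define
\[
f_e(s):=\al+\int_0^s\bigl[\la\,\si_c^+(t)+(1-\la)\,\si_c^-(t)\bigr]\,dt,\qquad s\in[0,1].
\]
Then $f_e(0)=\al$, $f_e(1)=\be$, and $f_e$ is $C^1$ by Proposition \ref{siac1}. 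Moreover $f_e'(s)$ is, for every $s$, a convex combination of $\si_c^-(s)$ and $\si_c^+(s)$, which are the endpoints of the (convex) $1$-dimensional sublevel set $\{p: H_\ga(s,p)\le c\}$; by quasiconvexity \textbf{(H$\ga$3)} we deduce $H_\ga(s,f_e'(s))\le c$ for every $s$, i.e. $f_e$ is a subsolution of $(HJ_\ga c)$. Setting $u(\Psi(e)(s)):=f_e(s)$ yields a well-defined function on $\G$, provided we pick for the reverse edge $-e$ the weight $1-\la$: a direct check using \eqref{ovgamma} and the identities $\sigma_c^+(s)_{\widetilde\ga}=-\sigma_c^-(1-s)_{\ga}$, $\sigma_c^-(s)_{\widetilde\ga}=-\sigma_c^+(1-s)_{\ga}$ shows that the two constructions produce the same function on $\ga([0,1])$. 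Continuity of $u$ at the vertices is automatic because $f_e(0)=w(\oo(e))$ and $f_e(1)=w(\tt(e))$, so $u$ agrees with $w$ on $\VV$ and is the desired $C^1(\G\setminus\VV)$ critical subsolution.

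\textbf{Second statement.} Apply Lemma \ref{preregola} to obtain a critical subsolution $\hat w$ on $\XX$ with $\langle d\hat w,e\rangle<\si_c(e)$ for every $e\in\EE\setminus\A_\XX^*$. Run the construction above for $\hat w$ to get $v$. For each arc $\ga=\Psi(e)$ whose support is \emph{not} contained in $\A_\G$, neither $e$ nor $-e$ lies in $\A_\XX^*$, so both strict inequalities $-\si_c(-e)<\langle d\hat w,e\rangle<\si_c(e)$ hold; consequently the corresponding $\la$ belongs to the open interval $(0,1)$, and $f'_e(s)$ lies \emph{strictly} between $\si_c^-(s)$ and $\si_c^+(s)$. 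Here lies the main point: we need $\si_c^-(s)<\si_c^+(s)$ throughout $[0,1]$ in order to conclude by \textbf{(H$\ga$3)} that $H_\ga(s,f_e'(s))<c$. This follows from $c>a_\ga$, which holds for arcs not in $\A_\G$: indeed, if $c=a_\ga$ then necessarily $c=a_0=a_\ga$, assumption \textbf{(H$\ga$4)} forces $\min_p H_\ga(\cdot,p)\equiv a_\ga$, which in turn (as in the proof of Theorem \ref{critico}) gives $\si_c(e\cup(-e))=0$, i.e.\ $e,-e\in\A_\XX^*$ and $\ga\subset\A_\G$, a contradiction. On the remaining arcs, those whose support lies in $\A_\G$, at least one of $e,-e$ belongs to $\A_\XX^*$ and Lemma \ref{preregola} (formula \eqref{preregola00}) forces $\la\in\{0,1\}$: the extension is then the unique $C^1$ function $s\mapsto\hat w(\oo(e))+\int_0^s\si_c^+$ (or the analogous minimal one), which is automatically $C^1$ and a critical subsolution. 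No strict subsolution inequality is required on these arcs since their interior is contained in $\A_\G$.

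\textbf{Expected difficulty.} The conceptual steps---piecewise construction, matching traces, quasiconvex interpolation---are quick once the right formula is written down. The delicate point is the second statement: one must ensure that the very same $C^1$ interpolation used to realise $\hat w$ on non-Aubry arcs simultaneously produces a \emph{strict} subsolution, and this boils down to the dichotomy $\si_c^-<\si_c^+$ on non-Aubry arcs versus $\si_c^-\equiv\si_c^+$ on degenerate arcs. Disentangling this dichotomy is where \textbf{(H$\ga$4)}, together with the equality case in the definition of $\A_\XX^*$, plays an essential role, exactly as in the proof of Theorem \ref{critico}.
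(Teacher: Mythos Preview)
Your proof is correct and follows essentially the same approach as the paper: on each arc take the convex combination $\la\,\si_c^+ + (1-\la)\,\si_c^-$ with $\la$ chosen to match the prescribed boundary values, and for the second statement start from the discrete subsolution furnished by Lemma \ref{preregola} so that $\la\in(0,1)$ on non-Aubry arcs. You actually spell out two points the paper leaves implicit: the compatibility of the construction on $\ga$ and $\widetilde\ga$, and the fact that $\la\in(0,1)$ yields a \emph{strict} subsolution only because $c>a_\ga$ on arcs outside $\A_\G$ (which you correctly derive from {\bf (H$\ga$4)} via the argument of Theorem \ref{critico}). One minor imprecision: on Aubry arcs with $\si_c(e)+\si_c(-e)=0$ the parameter $\la$ is not forced to lie in $\{0,1\}$, but since then $\si_c^+\equiv\si_c^-$ the extension is the same for every $\la$, so your conclusion stands.
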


\begin{proof} Let $w$ be a critical subsolution in $\XX$.  Given any arc  $\ga =
\Psi(e)$, we know,  see Proposition \ref{relationHJDEF}, that $
w(\ga(0))$ and $ w(\ga(1))$ satisfy the compatibility condition
\eqref{compa}, so that
\begin{equation}\label{regolaregola5}
    w(\ga(0)) + \int_0^1 \si_c^-(t) \, dt \leq w(\ga(1)) \leq  w(\ga(0)) + \int_0^1\si_c^+(t) \,
dt,
\end{equation}
where $\si_c^+$, $\si_c^-$ are defined as in \eqref{sia},
\eqref{siabis} with $H_\ga$, $c$ in place of $H$, $a$, respectively.
We can therefore find  $\la \in [0,1]$ with
 \begin{equation}\label{regolaregola6}
     w(\ga(1)) =  w(\ga(0)) + \int_0^1  \big [\la \,\si_c^-(t) +(1-\la)\, \si^+_c(t) \big ]\,
dt,
\end{equation}
and the function
\begin{equation}\label{regolaregola7}
   s \mapsto w(\ga(0)) + \int_0^s \big [ \la \,\si_c^-(t) +(1-\la)\, \si^+_c(t) \big ]\,
dt
\end{equation}
 is a  subsolution of class $C^1$ to $H_\ga=c$ in $(0,1)$ taking
the values $w(\ga(0))$, $w(\ga(1))$ at $s=0$ and $s=1$,
respectively. This shows the first part of the assertion.

As far as the second claim is concerned, we proceed by taking a critical subsolution $w$  satisfying
\eqref{preregola000}. This implies that strict inequalities prevail
in formula \eqref{regolaregola5}  whenever $\ga = \Psi(e)$ with $e$,
$-e$ not in $\A_\XX^*$. The $\la$ appearing in \eqref{regolaregola6}
can be consequently taken in $(0,1)$, so that the function defined
in \eqref{regolaregola7} is a strict subsolution to $H_\ga=c$. This
concludes the proof in the light of Remark \ref{regolo}.
\end{proof}

\smallskip

\begin{Remark} Notice that if we apply the  procedure
of first part of the previous result starting with a critical
solution rather than a critical subsolution, then the  property of being solution could be possibly false for the
regularized function.\\
\end{Remark}

\subsection{Representation formulae and uniqueness results on the network}
In this section, we want to provide representation formulae and
uniqueness results  with traces that are not necessarily defined on
vertices, but on a general subset of the network $\G$. To this aim,
we extend $S_a$, for $a \geq c$, from $\VV$ to the whole $\G$
defining a semidistance intrinsically related to $\HN$ and the level
$a$. This is basically the same object introduced in
\cite{CamilliSchieborn}. We do not develop here any further the
metric point of view, but just use it to establish an admissibility
condition for data assigned on subsets of $\G$, and provide
representation formulae.

Given a portion of arc $\ga \big |_{[s_1,s_2]}$, for $0\leq s_1 \leq
s_2 \leq 1$,  we define
\[\ell_a\left ( \ga \big |_{[s_1,s_2]} \right ) = \int_{s_1}^{s_2} {(\si_a^+)}^{\ga}(t)
\,dt,\] where ${(\si_a^+)}^{\ga}$ is  defined as in \eqref{sia}. We
get in particular, for the  whole arc, the relation
\begin{equation}\label{intrinsic0}
   \ell_a(\ga)= \si_a(\Psi^{-1}(\ga)) \qquad\hbox{for any $\ga \in
\EN$.}
\end{equation}
We define $\ell_a$ for  a curve on $\G$ given by a finite number of
concatenated arcs or portions of arcs  as the sum of the lengths of
the arcs or portion of arcs making  it up. We introduce the related
geodesic (semi)distance on $\G$ via
\begin{equation}\label{SGa}
   S^\G_a(x,y)= \min  \{ \ell_a(\xi) \mid \xi \;\hbox{union of concatenated arcs linking $x$ to
$y$}\}.
\end{equation}
 We deduce from the  results on $\si_a$  and
\eqref{intrinsic0} the following lemma.

\begin{Lemma} \hfill

\begin{itemize}
    \item [{i)}] If $x \neq y$ are in  $\VV$, then $S_a(x,y)=S^\G_a(x,y)$.
    \item  [{ii)}] If $\xi$ is a closed curve on $\G$, then $\ell_a(\xi) \geq
    0$.
\end{itemize}
\end{Lemma}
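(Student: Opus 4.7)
The plan rests on two elementary observations. First, by \eqref{intrinsic0}, the map $\ga \mapsto \ell_a(\ga)$ on $\EN$ coincides with $\si_a$ after transport to $\XX$ via $\Psi^{-1}$. Second, the pointwise inequality $\si_a^+(t) \geq \si_a^-(t)$ implies that going forward from $\ga(s_0)$ to $\ga(s_1)$ and then back to $\ga(s_0)$ has $\ell_a$-length $\int_{s_0}^{s_1}(\si_a^+ - \si_a^-)\,dt \ge 0$. More generally, if a subcurve on $\ga$ has net winding $k\in\Z$ (i.e.\ $n_+(s)-n_-(s)=k$ pointwise, where $n_\pm$ count the forward/backward crossings of $\ga(s)$), then its $\ell_a$-length is at least $k\,\ell_a(\ga)$ when $k \ge 0$, and at least $|k|\,\ell_a(\widetilde \ga)$ when $k \le 0$, as follows by rewriting
\[n_+\si_a^+ - n_-\si_a^- \;=\; k\,\si_a^+ + n_-\,(\si_a^+-\si_a^-)\]
(and the analogous identity in the other sign) and integrating.

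For part (i), the inequality $S^\G_a(x,y)\le S_a(x,y)$ is immediate, since each path $\eta=(e_1,\ldots,e_M)$ in $\XX$ from $x$ to $y$ yields a concatenation $\Psi(e_1)\cup\cdots\cup\Psi(e_M)$ in $\G$ of the same $\ell_a$-length, namely $\si_a(\eta)$. For the reverse inequality I would take any finite concatenation $\xi$ of arcs or portions linking the vertices $x$ and $y$ and decompose $\xi$ at its successive vertex-visits $x=v_0,v_1,\ldots,v_m=y$. By the network property \eqref{netw}, each subcurve between two consecutive visits remains within the support of a single arc, so the observation above yields a lower bound of $\ell_a(\ga_j)$ or $\ell_a(\widetilde{\ga_j})$ when $v_{j-1}\neq v_j$, and of $0$ when $v_{j-1}=v_j$. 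Dropping the trivial vertex-to-vertex loops and reassembling the direct traversals produces a path $\eta$ in $\XX$ from $x$ to $y$ with $\si_a(\eta) \le \ell_a(\xi)$, hence $S_a(x,y) \le \ell_a(\xi)$; infimizing over $\xi$ then gives $S_a(x,y)\le S^\G_a(x,y)$.

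For part (ii) I would apply the same vertex-decomposition to a closed curve $\xi$. If $\xi$ visits at least one vertex, then after reparametrizing to start and end at that vertex and dropping trivial loops, the assembled direct traversals form a directed cycle $\eta$ in $\XX$ with $\ell_a(\xi) \ge \si_a(\eta)$, and $\si_a(\eta)\ge 0$ follows from Lemma \ref{lemmotto} since $a\ge c$. If instead $\xi$ remains inside the interior of a single arc, the net winding on that arc is $0$, and the identity above gives directly $\ell_a(\xi) = \int n_+\,(\si_a^+-\si_a^-)\,dt \ge 0$.

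The main point to handle carefully is the bookkeeping on each arc, in particular the passage from an arbitrary concatenation of portions to the integer-valued net winding and the quantitative lower bound stated in the first paragraph; once this is in place, both conclusions reduce cleanly to Lemma \ref{lemmotto} together with the pointwise inequality $\si_a^+\ge\si_a^-$.
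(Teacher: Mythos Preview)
Your argument is correct and is precisely the deduction the paper leaves to the reader: the paper states only that the lemma follows ``from the results on $\si_a$ and \eqref{intrinsic0}'' and gives no further details. Your vertex-decomposition, together with the pointwise inequality $\si_a^+\ge\si_a^-$ on each arc and Lemma~\ref{lemmotto}, is exactly the natural way to fill this in.

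One minor remark on presentation: since you decompose at \emph{consecutive} vertex visits, each intermediate subcurve lies in a single arc support and avoids the vertex in between, so its net winding is necessarily in $\{-1,0,1\}$; the general statement for arbitrary $k\in\Z$ is correct but more than you actually use. Also, in the closed-arc case you might note explicitly that ``not visiting a vertex in between'' forces the subcurve, after the initial point, to stay in $\ga((0,1))\cong(0,1)$, which is what pins down the winding to $0$ or $\pm 1$ and makes the crossing-count identity clean. These are cosmetic points; the substance of your proof is sound.
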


It is easy to check that the maximal subsolution $v$ to  \eqref{HJa}
vanishing at $y \in \G$ given in Theorem \ref{deniro} and
Proposition \ref{postdeniro} is
\[v(x) = S_a^\G(y,x) \qquad\hbox{for any  $a \geq c$, $x \in \G$.}\]
We derive, taking also into account  Proposition \ref{sa},  that for
a continuous function $u : \G \to \R$, the condition
\begin{equation}\label{ancora}
   u(x) - u(y) \leq S_a^\G(y,x) \qquad \hbox{for any pair $x$, $y$ in
$ \G'$}
\end{equation}
 is necessary and sufficient  for being subsolution to
\eqref{HJ}.  Given a function $g$ defined on a subset $\G'$ of $\G$,
we therefore introduce the following admissibility condition for
\eqref{HJa}
\begin{equation}\label{newcompa}
   g(x) - g(y) \leq S_a^\G(x,y) \qquad\hbox{for any $x$, $y$ in
   $\G'$.}
\end{equation}

\smallskip

We give in the next theorem a couple  of examples of uniqueness
results for solutions to \eqref{HJa}, and corresponding
representation formulae, one can obtain prescribing values on
subsets  not necessarily  contained in $\VV$. Further results are
reachable following the same line. Similar formulae, even if for
subsets of vertices and just in the supercritical case, have been
already obtained in \cite{CamilliSchieborn}.

\smallskip

\begin{Theorem} \label{thisistheend} Let $\G'$  be a closed  subset of
$\G$ and $g$ an admissible trace defined on it, in the sense of
\eqref{newcompa}. We set
\[v(x)= \min\{g(y) + S^\G_a(y,x) \mid y \in \G'\}.\]
\begin{itemize}
\item[(i)]{\sc Critical case:}
if $a =c$ and $\G' \subset \A_\G$ with
\begin{equation}\label{ancorabis}
    \G' \cap \ga([0,1]) \neq \emptyset \qquad\hbox{ for any $\ga$ with $\Psi^{-1}(\ga) \in
\A^*_\XX$,}
\end{equation}
then $v$ is the unique solution  in $\G$ to $\HN(x,Du)=c$ agreeing
with $g$ on $\G'$.
\item[(ii)] {\sc Supercritical case:}  If  $ a >c$,  then $v$ is  uniquely characterized by the properties of being  in $
C(\G,\R)$, being solution of \eqref{HJ} in $\G \setminus \G'$, and
agreeing with $g$ on $\G'$.
\end{itemize}
\end{Theorem}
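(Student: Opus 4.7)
The plan is to establish four properties in order: (i) $v$ is continuous on $\G$ and equals $g$ on $\G'$; (ii) $v$ is a subsolution of $\eqref{HJ}$ on all of $\G$; (iii) $v$ solves $\eqref{HJ}$ on $\G \setminus \G'$, and on the whole of $\G$ when $a = c$; (iv) uniqueness. Properties (i) and (ii) are standard inf-convolution facts: the equality $v = g$ on $\G'$ follows from $S_a^\G(y,y)=0$ combined with the admissibility \eqref{newcompa}; and for $x,z \in \G$ and $y^\star \in \G'$ optimal for $v(x)$, the triangle inequality for $S_a^\G$ gives $v(z) - v(x) \leq g(y^\star) + S_a^\G(y^\star,z) - g(y^\star) - S_a^\G(y^\star,x) \leq S_a^\G(x,z)$, which is precisely the characterization \eqref{ancora} of subsolutions and yields continuity.

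For property (iii), fix $x \in \G \setminus \G'$ and an optimal $y^\star \in \G'$ with $v(x) = g(y^\star) + S_a^\G(y^\star,x)$, and set $\psi(z) := g(y^\star) + S_a^\G(y^\star,z)$. By Theorem \ref{deniro} the function $\psi$ is a solution of \eqref{HJ} on $\G \setminus \{y^\star\}$. Since $v \leq \psi$ on $\G$ with equality at $x \neq y^\star$, every $C^1$ supertangent to $v$ at $x$ is also a supertangent to $\psi$ at $x$, so the supersolution inequality transfers from $\psi$ to $v$; at a vertex, the state-constraint condition iii) transfers identically. Combined with (ii) this produces the solution property at $x$. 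In the critical case the hypothesis $\G' \subset \A_\G$ allows one to invoke Proposition \ref{postdeniro} to conclude that each such $\psi$ is in fact a critical solution on all of $\G$, so the same tangent argument extends the conclusion to every $x \in \G$.

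For uniqueness in the critical case, let $w$ be a critical solution with $w = g$ on $\G'$. Theorem \ref{regolauno} prescribes the differential of any critical subsolution along every arc of $\A_\G$, so $w$ restricted to each Aubry arc is determined by its value at a single point; the intersection condition \eqref{ancorabis} provides such a value via $g$. Consequently $w$ is uniquely determined on $\A_\G$, and in particular at every vertex of $\A_\XX$. The resulting trace $w|_{\A_\XX}$ is admissible for $(\mathcal{D}\mathit{FEc})$, and Theorem \ref{uniquenessset} together with Proposition \ref{unico} propagate the determination first to $\VV$ and then to $\G$, giving $w = v$.

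The main obstacle is uniqueness in the supercritical case. Given $w$ as in (ii), a path-decomposition argument using the solution hypothesis on $\G \setminus \G'$, the admissibility of $g$ on $\G'$, and continuity of $w$ shows that $w$ satisfies the global distance inequality \eqref{ancora} and is therefore a subsolution on the whole $\G$; applied with $y \in \G'$ this yields $w(x) \leq g(y) + S_a^\G(y,x)$, whence $w \leq v$. For the reverse inequality I exploit that $a > c$: by Lemma \ref{prestable} there is a discrete strict subsolution of $(\mathcal{D}\mathit{FEa})$, which via Proposition \ref{relationHJDEF} extends to a continuous strict subsolution $u^\star$ of \eqref{HJ} on $\G$, and after an additive normalization one may assume $u^\star \leq g$ on $\G'$. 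For $\lambda \in (0,1)$ small the convex combination $v_\lambda := \lambda u^\star + (1-\lambda) v$ is a strict subsolution lying below $v$ on $\G'$; if $v - w$ attained a positive maximum, $v_\lambda - w$ would still attain a positive maximum, necessarily at some $x_0 \in \G \setminus \G'$. The solution property of $w$ at $x_0$ --- standard one-dimensional viscosity comparison in the interior of an arc, condition iii) at a vertex --- then contradicts the strict subsolution property of $v_\lambda$ at $x_0$, exactly along the pattern of the proof of Proposition \ref{prop5.11}. This completes the argument.
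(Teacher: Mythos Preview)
Your existence argument (items (i)--(iii) of your plan) coincides with the paper's: both obtain the solution property of $v$ by touching from above with $\psi(\cdot)=g(y^\star)+S_a^\G(y^\star,\cdot)$, which is a solution on $\G\setminus\{y^\star\}$ by Theorem~\ref{deniro}, and on all of $\G$ in the critical case by Proposition~\ref{postdeniro}.

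For uniqueness you take a genuinely different route. The paper handles both cases by a single device: the \emph{backward procedure} of Proposition~\ref{precritico} and Theorem~\ref{uniquenessset}, transported to $\G$. Starting from any $x\in\G\setminus\G'$ and any competitor $u$, one follows backwards along the arc realizing the state constraint, obtaining a concatenated curve $\xi$ from some $y\in\G'$ to $x$ with $u(x)=g(y)+\ell_a(\xi)\geq v(x)$; maximality of $v$ then forces equality. In the critical case condition~\eqref{ancorabis} is precisely what guarantees that the backward path eventually meets $\G'$; in the supercritical case this is forced by $\si_a(\xi)>0$ on every cycle (otherwise $u$ would be unbounded below on the compact $\G$). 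You replace this unified argument by two separate ones: at $a=c$ you exploit the rigidity of critical subsolutions on Aubry arcs (Theorem~\ref{regolauno}) to pin down $w|_{\A_\XX}$ and then invoke Theorem~\ref{uniquenessset} and Proposition~\ref{unico}; at $a>c$ you run a strict--subsolution comparison on $\G$ in the style of Proposition~\ref{prop5.11}. Both lines are valid. The paper's approach is shorter and uniform across the two regimes; yours makes the role of Aubry rigidity (critical) and of the strict subsolution (supercritical) more transparent. Two points in your supercritical argument deserve care, though they can be fixed: the ``path--decomposition'' step showing that the competitor $w$ satisfies~\eqref{ancora} globally genuinely needs the admissibility of $g$ to supply the one--sided derivative bounds on arc portions lying in $\G'$ (this is correct, since~\eqref{newcompa} localizes to $\int\si_a^-\le g\circ\ga(s')-g\circ\ga(s)\le\int\si_a^+$, but it is not a throwaway line); and the strict subsolution $u^\star$ should be taken $C^1$ on each arc with derivative strictly between $\si_a^\pm$ --- for instance by extending a \emph{critical} subsolution as in~\eqref{regolaregola7}, not merely by citing Lemma~\ref{prestable} and Proposition~\ref{relationHJDEF} --- so that the contradiction at a vertex maximum can be read off from the local form of Proposition~\ref{state} on a small terminal subarc contained in $\G\setminus\G'$.
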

\begin{proof}  The solution property of $v$ in both cases, in $\G$ and $\G \setminus \G'$ respectively, follows directly from
being  a subsolution  in $\G$, in force of  \eqref{ancora},   and
satisfying the subtangent test as minimum of solutions, in $\G$ and
$\G \setminus \G'$ respectively. In addition $v$   is the maximal
solution (in $\G$ or $\G \setminus \G'$) agreeing with $g$ on $\G'$
in force of Theorem  \ref{deniro}, Proposition \ref{postdeniro}, and
 the admissibility condition \eqref{newcompa}.

Now, assume $u$ to be another solution taking the value $g$ on
$\G'$, by adapting the backward procedure  explained in Proposition
\ref{precritico} and Theorem \ref{uniquenessset}, we construct, for
any $x \in \G \setminus \G'$, a curve $\xi$ made up by concatenated
arcs or portion of arcs starting at some point $y \in\G'$ and
ending at $x$ with
\[ u(x) = g(y) +\ell_a(\xi) \geq v(x).\]
In the critical case  condition \eqref{ancorabis} plays a crucial
role  for this. The maximality property of $v$ then implies that
equality must hold
 in the above formula. This ends the proof.
\end{proof}

\bigskip

\section{Summary of the Main Results} \label{summary}
In this final section we  summarize our results  for the Hamilton--Jacobi equation posed on the network.\\

\noindent {\bf Main Theorem.} {\it Let $\G$ be an embedded network
(finite, connected, possibly including loops and more arcs
connecting two vertices)  and let $\XX=(\VV,\EE)$ be the underlying
abstract graph. Let us consider a Hamiltonian
$\HN=\{H_{\gamma}\}_{\ga \in \EN}$ on the network, satisfying
conditions {\bf (H$\ga$1)--(H$\ga$4)} for any $\ga\in\EN$ and let
$a_0$ denote the value defined in \eqref{a0}. Then:\\

\begin{itemize}
\item[I.] {\sc Global Solutions:}\\
\begin{itemize}
\item[(i)] {\sc (Existence)}  There exists a unique value $c=c(\HN)\geq a_0$ -- called {\em Ma\~n\'e critical value}
 -- for which the equation $\HN(x,Du)= c$
admits global solutions. In particular, these solutions are Lipschitz continuous
on $\G$.\\

\item[(ii)] {\sc (Uniqueness)}
Let $\A_\XX=\A_\XX(\HN) \subseteq \VV$ be the {\em (projected) Aubry
set} associated to $\HN$ and let $S_c: \VV\times\VV \longrightarrow
\R$ be the function defined in \eqref{Saa}. Then, given any {\em
admissible trace} $g$ on $\A_\XX$, i.e., a function $g:\A_\XX
\longrightarrow \R$ such that for every $x,y\in A_\XX$
\begin{equation*}
   g(x) -g(y) \leq S_c(y,x),
\end{equation*}
there exists a unique global solution $u\in C(\G,\R)$ to $\HN(x,Du)=c$ agreeing  with $g$ on $\A_\XX$.
Conversely, for any  solution $u$ to $\HN(x,Du)=c$, the function  $g=u_{|\A_\XX}$ gives rise to admissible trace on $\A_\XX$.\\

\item[(iii)] {\sc (Hopf--Lax type formula 1)}
Let $g:\A_\XX \longrightarrow \R$ be an admissible trace and  $u\in C(\G,\R)$  the corresponding solution to $\HN(x,Du)=c$.
Then, on the support of any arc $\ga \in \EN$, $u$ is given by
\[ u(\ga(s)) =  \min \{ {\bf A} , \, {\bf B}\},\] where
\begin{eqnarray*}
  {\bf A} &:=& \min \{g(y) + S_c(y,\ga(0)) \mid y \in \A_\XX\}  +
\int_0^s \si_c^+(t)\,dt \\
 {\bf B} &:=& \min \{g(y) + S_c(y,\ga(1)) \mid y \in
\A_\XX\} - \int_s^1 \si_c^-(t)\,dt ,
\end{eqnarray*}
with $s \in [0,1]$ and $\si_c^+$, $\si_c^-$  defined as in
\eqref{sia}, \eqref{siabis} with $H_\ga$ in place of $H$.\\
\\

\item[(iv)]{\sc (Hopf--Lax type formula 2):}
Let $\G'$  be a closed  subset of $\G$ with
\[ \G' \cap \ga([0,1]) \neq \emptyset \qquad\hbox{ for any $\ga$ with $\Psi^{-1}(\ga) \in
\A^*_\XX$.}\] For  any {\em admissible trace} $g$ on $\G'$, in the
sense of \eqref{newcompa} with $c$ in place of $a$, there exists a
unique solution $u\in C(\G,\R)$ to $\HN(x,Du)=c$ agreeing with $g$
on $\G'$, which is given by
\[u(x)= \min\{g(y) + S^\G_c(y,x) \mid y \in \G'\},\]
where $S^\G_c(\cdot,\cdot)$ denotes the intrinsic (semi)distance defined in \eqref{SGa}.\\

\end{itemize}

\item[II.] {\sc Subsolutions:}\\

\begin{itemize}
\item[(i)] {\sc (Maximal subsolutions)} For $a \geq c$, $y \in
\G$, the maximal subsolution to \eqref{HJ} taking an assigned value
at $y$ is solution in $\G \setminus \{y\}$. \\

\item[(ii)] {\sc (PDE characterization of the Aubry set)}
Let $\A_\Gamma=\A_\Gamma(\HN) \subset \G$ be the Aubry set on the
network, as defined in \eqref{Aubrynetwork}.
The maximal subsolution to $(\mathcal{H}J\mathit{c})$ taking a given value at a point $y \in \G$  is   a
critical solution on the whole network if and only if $y \in \A_\G$.
\\

\item[(iii)] {\sc (Regularity of critical subsolutions)}
Any subsolution $v:\G
\to \R$ to $\HN(x,Du)=c$ is of class $C^1(\G \setminus \VV)$  and
they all possess the same differential on $\A_\G \setminus \VV$.
More specifically, if $x_0\in \A_\G$ and $x_0 = \ga(s_0)$, for some
$\ga\in\EN$ and $s_0 \in (0,1)$, then its differential at $x_0$ is
uniquely determined by the relation
\[ ( D_\G v(x_0), \dot\ga(s_0) )= \si_c^+(s_0),\]
where $\si_c^+$ was defined  in \eqref{sia}, and therefore
\[v(\ga(s)) = v(\ga(0)) + \int_0^s \si^+_c(t) \, dt  \qquad \mbox{for any}\; s\in [0,1].\]
We infer from this that any pair of critical subsolutions differs by a
constant on the support of $\ga$.\\

\item[(iv)] {\sc (Existence of $C^1$ critical subsolutions)}
Given a  function $g: \VV \longrightarrow \R$ such that
\begin{equation*}
   g(x) -g(y) \leq S_c(y,x) \qquad \forall\; x,y\in \VV,
\end{equation*}
there exists a critical subsolution  $v$ on $\Gamma$, with $v=g$ on $\VV$,  which is of class $C^1$
on  $\Gamma \setminus \VV$. \\
In addition, there exists   a critical subsolution $v$ of class
$C^1(\G \setminus \VV)$  satisfying
$$H_\ga(s,Dv(\ga(s)))< c$$
for all  $s \in (0,1)$ and $\ga\in\EN$ with $\ga((0,1)) \cap \A_\Gamma = \emptyset$.
\\

\item[(v)] {\sc (Hopf--Lax formula for maximal supercritical subsolutions 1)} Let $a > c$
and  $\VV' \subset \VV$. For any $g: \VV' \longrightarrow \R$
satisfying
\begin{equation*}
   g(x) -g(y) \leq S_a(y,x) \qquad \forall\; x,y\in \VV',
\end{equation*}
where $S_a(\cdot,\cdot)$ was defined in \eqref{Saa}, there exists a
unique solution $u$ to $\HN(x,Du)=a$ in $\G\setminus \VV'$ agreeing
with $g$ on $\VV'$;  in addition, $u$ is also a subsolution to
$\HN(x,Du)=a$ on the whole $\G$. In particular, on the support of
any arc $\ga \in \EN$, $u$ is given by:
\[ u(\ga(s)) =  \min \{ {\bf C} , \, {\bf D}\},\] where
\begin{eqnarray*}
  {\bf C} &:=& \widetilde{g}(\ga(0))  +
\int_0^s \si_a^+(t)\,dt \\
 {\bf D} &:=& \widetilde{g}(\ga(1)) - \int_s^1 \si_a^-(t)\,dt \\
 \widetilde{g}(x) &:=& \left \{\begin{array}{cc}
    g(x) & \quad\hbox{if $x \in \VV'$} \\
   \min  \{g(y) + S_a(y,x) \mid y \in \VV'\} & \quad\hbox{if $x \not\in \VV'$} \\
    \end{array} \right.
\end{eqnarray*}
with $s \in [0,1]$ and $\si_a^+$, $\si_a^-$  defined as in
\eqref{sia}, \eqref{siabis}.
\\

\item[(vi)] {\sc (Hopf--Lax formula for  maximal supercritical subsolutions 2)} Let $ a >c$ and $\G'$ be a closed subset of
$\G$. Let $g$ be an  admissible trace  on $\G'$, in the sense of
\eqref{newcompa},   then there exists a unique solution $u \in
C(\G,\R)$ to \eqref{HJ} {on $\G\setminus \G'$} agreeing with $g$ on $\G'$, which is given by
\[u(x)= \min\{g(y) + S^\G_a(y,x) \mid y \in \G'\},\]
where $S^\G_a(\cdot,\cdot)$ denotes the intrinsic (semi)distance defined in \eqref{SGa}.

\end{itemize}

\end{itemize}

}

\begin{proof} \hspace{10 pt}
\begin{itemize}
\item[I.] (i) Existence follows from Theorem \ref{critico} and Proposition \ref{relationHJDEF}; Lipschitz continuity from Proposition
\ref{Lipcont}.\\
(ii) This part is obtained by combining  Proposition
\ref{relationHJDEF} and Theorem \ref{uniquenessset}\\
(iii) This representation formula was proven in Proposition
\ref{statextra}.\\
(iv) See Theorem \ref{thisistheend} (i).\\

\item[II.]
(i) See Theorem \ref{deniro}.\\
(ii) See Proposition \ref{postdeniro}.\\
(iii) See Theorem \ref{regolauno}.\\
(iv) See Theorem \ref{regolaregola}.\\
(v) These results are obtained by combining Proposition
 \ref{prop5.8}, Proposition \ref{prop5.11}, Theorem \ref{prop5.23} and using the
representation formula in Proposition \ref{statextra}.\\
(vi) See Theorem \ref{thisistheend} (ii).\\
\end{itemize}
\end{proof}

\begin{appendix}

\section{}\label{proofs}

\medskip

\begin{proof} {\bf (Proposition \ref{Lipcont}) }\;\;
 Taking into account that for any $\gamma \in \EN$ (which is a
finite set) $w\circ \ga$ is Lipschitz--continuous in $[0,1]$, thanks
to the coercivity condition {\bf (H$\ga$2)}, we deduce that there
exists $L>0$ such that, for any given subsolution $w$
\begin{equation}\label{lipcont2}
   |w(\ga(s_2)) -w(\ga(s_1))| \leq L \; \ell \left ( \ga  \big |_{[s_1,s_2]} \right ) \qquad\hbox{for all
$\ga \in \EN,$ and $s_1\leq s_2 \in[0,1]$;}
\end{equation}
hereafter $\ell$ indicates the Euclidean length of curves in $\R^N$.

 We proceed by considering
$x$ and $y$ in $\Gamma$ and a finite sequence of concatenated arcs
$\ga_1, \cdots \ga_M$, for some index $M$, that realize the geodesic
distance $d_\G(x,y)$. More specifically, we assume  that $x =
\ga_1(t_x)$, $y=\ga_M(t_y)$ with $t_x$, $t_y$ in $[0,1]$ and that
\[d_\Gamma(x,y) \; =\; \ell \left ( \ga_1 \big |_{[t_x,1]} \right ) +
\sum_{i=2}^{M-1} \ell(\ga_i) + \ell \left ( \ga_M \big |_{[0,t_y]}
\right ).\]
In the remainder of the proof we assume that $M>2$ in order to ease the notation (the other cases can be treated analogously).  \\

We deduce from \eqref{lipcont2} that
\begin{eqnarray*}
  |w(y)-w(x)| &\leq& |w(\ga_1(1))-w_1(\ga_1(t_x))| \\ &+&\sum_{i=2}^{M-1}
|w(\ga_i(1))-w(\ga_i(0))| + |w(\ga_M(t_y))-w_1(\ga_M(0))| \\
   &\leq&  L\, \left[ \ell \left ( \ga_1 \big |_{[t_x,1]} \right ) +
\sum_{i=2}^{M-1} \ell(\ga_i) + \ell \left ( \ga_M \big |_{[0,t_y]}
\right )  \right] \\
&=& L \, d_\Gamma(x,y).
\end{eqnarray*}

This concludes the proof.
\end{proof}

\bigskip

\begin{proof} {\bf (Proposition \ref{statextra})} \;\; We denote by $w$ the function appearing in the statement.
If $a =a_\ga$, the assertion comes from \eqref{siasia} and
Proposition \ref{equnique}. Instead, if $a > a_\ga$,  the function
$w$ is an a.e. subsolution, being the minimum of two  $C^1$
(sub)solutions. Using a basic property in viscosity solutions
theory,  it is also a supersolution, as minimum of supersolutions.
Moreover, $w(0)=\al$, $w(1)= \be$ hold thanks to \eqref{compa}.

Finally,  the function $s \longmapsto \int_0^s \si^+_{a_\ga}$ is a
strict subsolution to \eqref{HJg}, and this implies by an argument
going back to \cite{Ishii}  that the Dirichlet problem with
admissible data $\al$, $\be$ is uniquely solved.
\end{proof}

\bigskip

\begin{proof} {\bf (Proposition \ref{state})} \;\;
If $a =a_\ga$, then, as already pointed out in Proposition
\ref{equnique}, the solution is unique up to additive constants,
hence it is automatically given by \eqref{state1} once the value
$w(0)$ is assigned.

Therefore, from now on we can  assume that $a
>a_\ga$. By Proposition
\ref{statextra}
\[w(s) = \min \left \{ w(0) +\int_0^s \si_a^+(t) \, dt , \, w(1) - \int_s^1 \si_a^-(t) \, dt \right \}
\qquad\hbox{for any $s$.}\] We  claim that if
\begin{equation}\label{state2}
    w(s_0) =  w(1) - \int_{s_0}^1 \si_a^-(t) \, dt
\end{equation}
for some $s_0 \in (0,1)$, then
\[ w(s) =  w(1) - \int_s^1 \si_a^-(t) \, dt \qquad\hbox{for any $s \in (s_0,1]$.} \]
Assume by contradiction that there exists  $s_1 > s_0$ such that
\[ w(0) + \int_0^{s_1} \si_a^+(t) \, dt = w(0)  + \int_0^{s_0}\si_a^+(t) \,dt+ \int_{s_0}^{s_1} \si_a^+(t) \, dt <
w(1)- \int_{s_1}^1 \si^-(t) \,dt;\] this implies that
\begin{equation}\label{state3}
   w(0) +\int_0^{s_0} \si_a^+(t) \,dt  < w(1) - \int_{s_1}^1 \si_a^-(t)\,dt  -
   \int_{s_0}^{s_1}
   \si_a^+(t) \,dt.
\end{equation}
It is apparent  that
\[ \int_{s_0}^{s_1} \si_a^+(t) \, dt > \int_{s_0}^{s_1} \si_a^-(t) \,dt \]
and we can consequently  deduce from \eqref{state3} that
\[ w(0)  +\int_0^{s_0} \si_a^+(t) \,dt  < w(1)- \int_{s_1}^1 \si_a^-(t)\,dt - \int_{s_0}^{s_1} \si_a^-(t)\,dt =
w(1)- \int_{s_0}^1 \si^-(t)\,dt,\] in contrast with \eqref{state2}.
We assume, for purposes of contradiction, that \eqref{state2} holds
true  for some  $s_0\in (0,1)$. Since $a
> a_\ga$, we can take $p_0$ with $H(1,p_0) < a$. If $w$ is not of
the form \eqref{state1}, then, owing to the previous claim, we can
fix $s_0$ in such a way that
\begin{eqnarray*}
 w(s) = w(1) - \int_s^1\si^-_a(t) \,dt  \qquad {\rm and} \qquad
 H(s,p_0) < a
\end{eqnarray*}
for $s \in [s_0,1]$. This implies
\[\varphi(s):= w(1) + p_0 (s-1) \leq w(1) - \int_s^1\si^-_a(t) \,dt= w(s),\]
for $s \in [s_0,1]$, and consequently $\varphi$ is a constrained
subtangent to $w$ at $1$ with
\[ H(1,\varphi'(1))= H(1,p_0) <1,\]
contradicting \eqref{state0}. We deduce that $w$ is of the form
\eqref{state1}  showing  the first part of the assertion.

Conversely, if $w$ is of the form \eqref{state1}, then it is of
class $C^1$  in $(0,1)$ with $w'(s)= \si^+_a(s)$. Consequently, any
constrained subtangent $\varphi$ at $t =1$ must  satisfy
\[w(1) - \int_s^1 \varphi' \, dt = \varphi (s) \leq w(s)= w(1)- \int_s^1 \si^+_a \, dt\]
for $s$ sufficiently close to $1$. This  implies
\[ \int_s^1 \varphi' \, dt \geq \int_s^1 \si^+_a \, dt\]
and shows the existence of a sequence $s_n$ contained in $(0,1)$ and
converging to $1$ as $n$ goes to infinity, with
$\varphi'(s_n) \geq \si^+_a(s_n).$
Passing to the limit as $n$ goes to infinity, we get
$\varphi'(1) \geq \si^+_a(1).$
We deduce from this the inequality \eqref{state0} and conclude the
proof.
\end{proof}

\bigskip

\begin{proof} {\bf (Proposition \ref{aubryall})}\;\;  If $a=c_\ga=a_\ga$ then the integrals in \eqref{aubryall1} coincide in force of \eqref{siasia},
then they must both vanish, and this shows the assertion. Assume now
that $c_\ga > a_\ga$   and also assume for purposes of contradiction
that strict inequalities prevail instead in \eqref{aubryall1}. Then,
we can  find $\la \in (0,1)$ with
\[\int_0^1 \big [ \la \, \si_{c_\ga}^+ (t) + (1-\la) \, \si_{c_\ga}^-(t) \big ]
\,dt =0.\] Taking into account that $\si_{c_\ga}^+(t) >
\si_{c_\ga}^-(t)$ for any $t$,  this implies that
\[s \mapsto \int_0^s \big [ \la \, \si_{c_\ga}^+ (t) + (1-\la) \, \si_{c_\ga}^- (t) \big
] \,dt\] is a strict periodic subsolution to $H= c_\ga$.  This is
impossible by the very definition of $c_\ga$.
\end{proof}

\bigskip

\begin{proof}{\bf (Corollary \ref{fourtris})}\;\; The unique point to check is that the values $\al+\be$ at $s=0$
and $\al$ at $s= s_0$ are admissible, in the sense of \eqref{compa},
for \eqref{HJg} in $(0,s_0)$, and the same holds true in $(s_0,1)$
for the value $\al$ at $s=s_0$ and $\al+\be$ at $s =1$. The argument
is the same for the two subintervals. We therefore focus on
$[s_0,1]$.

If $u(1)-u(s_0)=\be = \int_{s_0}^1 \si^+a(t) \,dt$ the compatibility
property is immediate  and the solution in $(s_0,1)$ is given by
\eqref{fourtris2}, as asserted in item ii) of the statement.  Let us
instead assume
\begin{equation}\label{fourtris3}
   u(1)-u(s_0) = \be=- \int_0^{s_0} \si^-a(t) \,dt < \int_{s_0}^1 \si^+a(t)
   \,dt.
\end{equation}
We have by Lemma \ref{six} $\int_0^1 \si_a^-(t) \, dt \leq 0$ and
consequently
\[u(1)-u(s_0) \geq \int_{s_0}^1 \si_a^-(t) \, dt.\]
The last inequality plus \eqref{fourtris} shows the claimed
admissibility property. This concludes the proof.
\end{proof}

\bigskip

\begin{proof} {\bf (Proposition \ref{unico} )} \;\; Let $w$ be a solution to \eqref{HJ} with trace $u$ on $\VV$. We
know by the very definition of solution that given any arc $\ga$,
then $w \circ \ga$ is a solution to $H_\ga = a$ in $(0,1)$ taking
the values $u(\ga(0))$ and $u(\ga(1))$ at $0$ and $1$, respectively.
This implies  that such boundary values are admissible with respect
to $H_\ga$, in the sense of formula \eqref{compa} with $H_\ga$ in
place of $H$. By the uniqueness property showcased in Proposition
\ref{statextra}, the values of $w$ on the support of $\gamma$ are
therefore uniquely determined by $u(\ga(0))$, $u(\ga(1))$ and
$H_\ga$. Since the arc $\gamma$ has been arbitrarily chosen, we can
hence conclude the asserted uniqueness.
\end{proof}

\end{appendix}

\bigskip

\vspace{10 pt}

\end{document}